\pgfplotsset{width=9cm,compat=1.5.1}
\newtheorem{theorem}{Theorem}
\newtheorem{definition}[theorem]{Definition}
\newtheorem{lemma}[theorem]{Lemma}
\newtheorem{corollary}[theorem]{Corollary}
\newtheorem{remark}[theorem]{Remark}
\newtheorem{proposition}[theorem]{Proposition}
\newtheorem{example}[theorem]{Example}
\newcommand{\up}[1]{\overset{#1}{\uplus}~}
\begin{document}
\title{New results in vertex sedentariness}

\author{
	Hermie Monterde \textsuperscript{\!\!1}
}

\maketitle










\begin{abstract}
A vertex in a graph is said to be sedentary if a quantum state assigned on that vertex tends to stay on that vertex. Under mild conditions, we show that the direct product and join operations preserve vertex sedentariness. We also completely characterize sedentariness in blow-up graphs. These results allow us to construct new infinite families of graphs with sedentary vertices. We prove that a vertex with a twin is either sedentary or admits pretty good state transfer. Moreover, we give a complete characterization of twin vertices that are sedentary, and provide sharp bounds on their sedentariness. As an application, we determine the conditions in which perfect state transfer, pretty good state transfer and sedentariness occur in complete bipartite graphs and threshold graphs of any order.
\end{abstract}

\noindent \textbf{Keywords:} quantum walk, sedentary vertex, twin, direct product, adjacency matrix, Laplacian matrix\\
	
\noindent \textbf{MSC2010 Classification:} 
05C50; 
05C22; 
15A16; 
15A18; 
81P45; 


\addtocounter{footnote}{1}
\footnotetext{Department of Mathematics, University of Manitoba, Winnipeg, MB, Canada R3T 2N2}


\section{Introduction}\label{secINTRO}

The concept of sedentariness was introduced by Godsil to study the `stay-at-home' behaviour of vertices in families of graphs as their number of vertices tend to infinity \cite{SedQW}. In particular, Godsil showed that complete graphs and large classes of strongly regular graphs are sedentary families. Recently, Monterde formalized the notion of a sedentary vertex and systematically studied both sedentary families and sedentary vertices \cite{Monterde2023}. She proved that Cartesian products preserve vertex sedentariness, and provided examples of strongly cospectral vertices that are sedentary. She also showed that a vertex with at least two twins is sedentary and established lower bounds on their sedentariness. Her results allowed for the construction of new graphs with sedentary vertices using joins, blow-ups and attachment of pendent paths.

In this paper, we continue the investigation of vertex sedentariness. In Section \ref{secPRELIM}, we introduce some graph theory, matrix theory and quantum walks background. Section \ref{secSed} is devoted to a summary of known results about sedentariness that are useful throughout the paper. We highlight the fact that pretty good state transfer and sedentariness are mutually exclusive types of state transfer. We also provide infinite families of graphs containing vertices that are neither sedentary nor involved in pretty good state transfer (Example \ref{star} and Remark \ref{exsc}). In Section \ref{secGT}, we prove an interesting result which states that a vertex with a twin only exhibits either sedentariness or pretty good state transfer (Corollary \ref{yipee1}). This allows us to characterize twin vertices that are sedentary (Theorem \ref{twinschar}) and provide an improved bound on their sedentariness (Theorem \ref{sed2}). We then apply our results to characterize pretty good state transfer and sedentariness in complete multipartite graphs in Section  \ref{secCMG} and threshold graphs in Section \ref{secTG}. In particular, we show that a huge fraction of complete multipartite graphs are Laplacian sedentary at every vertex (Corollary \ref{cmg1}). In fact, the only complete multipartite graphs that do not contain a Laplacian sedentary vertex are cocktail party graphs on twice an even number of vertices (Corollary \ref{23}). For threshold graphs, we characterize Laplacian sedentary vertices (Theorem \ref{psed}), and provide tight lower bounds on sedentariness, which vary depending on the cell containing the vertex and the form of the threshold graph in question (Theorem \ref{thresh}). We also establish that almost all complete multipartite graphs and threshold graphs contain a sedentary vertex (Corollary \ref{almost}). Section \ref{secDP} explores sedentariness in direct products. We give sufficient conditions such that the direct product of a complete graph with another graph yields sedentary vertices (Theorem \ref{dirprod}(1)). This yields new families of graphs with sedentary vertices, such as the direct product of complete graphs under mild conditions (Theorem \ref{complete}). We also characterize sedentariness in bipartite doubles (Theorem \ref{dirprod}(2)). In particular, we show that the bipartite double of a complete graph does not exhibit sedentariness (Corollary \ref{kmxY1}(1)). This prompts us to provide an infinite family of connected bipartite doubles containing sedentary vertices (Theorem \ref{k2xYsed}). Section \ref{secBU} is dedicated to characterizing adjacency sedentariness in blow-up graphs. Prior to this work, Monterde showed that the blow-ups of $m\geq 3$ copies of a graph are sedentary at every vertex. Here, we resolve the case $m=2$ (Corollary \ref{m=2}) and provide sharper bounds for sedentariness in blow-ups (Theorem \ref{size3}). In Section \ref{secJoins}, we prove that the join operation preserves  adjacency and Laplacian sedentariness under mild conditions (Theorem \ref{joinsed}). This provides yet another method for constructing new graphs with sedentary vertices. Finally, we present open questions in Section \ref{secOQ}.

\section{Preliminaries}\label{secPRELIM}

Throughout, we let $X$ be a weighted and undirected graph with possible loops, no multiple edges, all positive edge weights and vertex set $V(X)$. The \textit{adjacency matrix} $A(X)$ of $X$ is defined entrywise as
\begin{equation*}
A(X)_{u,v}=
\begin{cases}
 \omega_{u,v}, &\text{if $u$ and $v$ are adjacent}\\
 0, &\text{otherwise},
\end{cases}
\end{equation*}
where $\omega_{u,v}$ is the weight of the edge $(u,v)$. The \textit{degree matrix} $D(X)$ of $X$ is the diagonal matrix of vertex degrees of $X$, where $\operatorname{deg}(u)=2\omega_{u,u}+\sum_{j\neq u}\omega_{u,j}$ for each $u\in V(X)$. For each $q\in\mathbb{R}$, we call the matrix $M_q(X)=qD(X)+A(X)$ a \textit{generalized adjacency matrix} for $X$. Note that $M_0(X)=A(X)$ and $M_{-1}(X)=-L(X)$, where $L(X)$ is the \textit{Laplacian matrix} of $X$. We say that $X$ is \textit{weighted $k$-regular} if $\operatorname{deg}(u)-\omega_{u,u}$ is a constant $k$ for each $u\in V(X)$. Equivalently, each row sum of $A(X)$ is a constant $k$. If $X$ is simple (i.e., loopless), then $X$ is weighted $k$-regular if and only if $D(X)=kI$. If the context is clear, then we write $M_q(X)$, $A(X)$, and $L(X)$ resp.\ as $M_q$, $A$, and $L$. We also write $M_q$ as $M$ when $q$ is not needed. We represent the transpose of $M$ by $M^T$, and the characteristic polynomial of $M$ in the variable $t$ by $\phi(M,t)$. We denote the simple unweighted empty, complete, cycle and path graphs on $n$ vertices resp.\ by $O_n$, $K_n$, $C_n$ and $P_n$. The all-ones vector of order $n$, the zero vector of order $n$, the $m\times n$ all-ones matrix, and the $n\times n$ identity matrix are denoted by $\textbf{1}_n$, $\textbf{0}_n$, $\textbf{J}_{m,n}$ and $I_n$, respectively. If $m=n$, then we write $\textbf{J}_{m,n}$ as $\textbf{J}_n$. If the context is clear, then we simply write these matrices resp.\ as $\textbf{1}$, $\textbf{0}$, $\textbf{J}$ and $I$.

Let $H$ be a Hermitian matrix whose rows and columns are indexed by the vertices of $X$ satisfying the property $H_{u,v}=0$ if and only if no edge exists between $u$ and $v$ in $X$. A \textit{(continuous-time) quantum walk} on $X$ with respect to $H$ is determined by the unitary matrix
\begin{equation}
\label{M}
U_H(t)=e^{itH}.
\end{equation}
In this work, we focus on the case $H=M_q$. If $q$ is not important, then we simply write $U_{M_q}(t)$ as $U_M(t)$. Unless otherwise specified, the results in this paper apply to $M_q=M$ for all $q\in\mathbb{R}$ Since $M$ is real symmetric, it admits a spectral decomposition

\begin{equation}
\label{specdec}
M=\sum_{j}\lambda_jE_j,
\end{equation}
where the $\lambda_j$'s are the distinct real eigenvalues of $M$ and each $E_j$ is the orthogonal projection matrix onto the eigenspace associated with $\lambda_j$. The \textit{eigenvalue support} of $u\in V(X)$ is the set
\begin{center}
    $\sigma_u(M)=\{\lambda_j:E_{j}\textbf{e}_u\neq \textbf{0}\},$
\end{center}
where $\textbf{e}_u$ denotes the characteristic vector of vertex $u$. From (\ref{specdec}), we may write (\ref{M}) as
\begin{center}
$U_M(t)=\sum_{j}e^{it\lambda_j}E_j$.
\end{center}
We say that two vertices $u$ and $v$ are \textit{cospectral} if $(E_j)_{u,u}=(E_j)_{v,v}$ for each $j$, and \textit{strongly cospectral} if $E_j\textbf{e}_u=\pm E_j\textbf{e}_v$ for each $j$. Note that if $u$ and $v$ are cospectral, then $U_M(t)_{u,u}=U_M(t)_{v,v}$ for all $t$.

With respect to the matrix $M$, we say that \textit{perfect state transfer} (PST) occurs between two vertices $u$ and $v$ at time $\tau$ if $|U(\tau)_{u,v}|=1$. The minimum $\tau>0$ such that $|U(\tau)_{u,v}|=1$ is called the \textit{minimum PST time} between $u$ and $v$. We say that $u$ is \textit{periodic} at time $\tau$ if $|U(\tau)_{u,u}|=1$. The minimum $\tau>0$ such that $|U(\tau)_{u,u}|=1$ is called the \textit{minimum period} of $u$, which we denote by $\rho$. We say that \textit{pretty good state transfer} (PGST) occurs between $u$ and $v$ if $\lim_{\tau_k\rightarrow\infty}|U(\tau_k)_{u,v}|=1$ for some $\{\tau_k\}\subseteq\mathbb{R}$. Further, if $X$ is a simple weighted $k$-regular graph, then $U_M(t)=e^{it(qkI+A)}=e^{it(qk)}U_A(t)$, and so for all $q\in\mathbb{R}$,
\begin{center}
$\left|U_M(t)_{u,v}\right|^2=\left|U_A(t)_{u,v}\right|^2.$
\end{center}
holds for all $t$ and for all $u,v\in V(X)$. That is, the quantum walks with respect to $A$ and $M$ are equivalent for all $q\in\mathbb{R}$, i.e., they exhibit the same types of state transfer. 

The \textit{join} of weighted graphs $X$ and $Y$, denoted $X\vee Y$, is the resulting graph obtained by joining every vertex of $X$ with every vertex of $Y$ with an edge of weight one.
In particular, the graph $O_1\vee Y$ is called a \textit{cone} on $Y$ and the lone vertex of $O_1$ is called the \textit{apex}. If $X\in\{O_2,K_2\}$, then $X\vee Y$ is called a \textit{double cone} on $Y$ and the two vertices of $X$ are called the \textit{apexes}. We denote the cocktail party graph on $2k$ vertices by $CP(2k)$, and the complete graph on $n$ vertices minus an edge by $K_n\backslash e$.

\section{Sedentariness}\label{secSed}

In this section, we gather some known results on sedentariness that will be useful throughout the paper.

\begin{definition}
\label{def}
We say that vertex $u$ of $X$ is \textit{$C$-sedentary} if for some constant $0<C\leq 1$,
\begin{equation}
\label{inf}
\inf_{t>0}\lvert U_M(t)_{u,u}\rvert\geq C.
\end{equation}
If equality holds in (\ref{inf}), then we say that $u$ is \textit{sharply} $C$-sedentary, while if the infimum in (\ref{inf}) is attained at some $t_0>0$, then we say that $u$ is \textit{tightly} $C$-sedentary at time $t=t_0$. Further, we say that $u$ is \textit{not sedentary} if $\inf_{t>0}\lvert U_M(t)_{u,u}\rvert=0$.
\end{definition}
If $C$ is not important, then we respectively say \textit{sedentary}, \textit{sharply sedentary}, and \textit{tightly sedentary}. Sedentariness of a vertex $u$ implies that $\lvert U_M(t)_{u,u}\rvert$ is bounded away from $0$, and so physically speaking, the quantum state initially at vertex $u$ tends to stay at $u$.

We now state an important property of sedentary vertices \cite[Proposition 2c]{Monterde2023}.

\begin{proposition}
\label{sedpgst}
A sedentary vertex cannot be involved in pretty good state transfer. Conversely, a vertex involved in pretty good state transfer cannot be sedentary.
\end{proposition}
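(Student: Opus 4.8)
The plan is to prove the contrapositive-style equivalence directly from the definitions, using the quantitative relationship between $|U_M(t)_{u,u}|$ and the transfer amplitudes. The key observation is that unitarity of $U_M(t)$ forces the entries in any row to satisfy $\sum_{v\in V(X)}|U_M(t)_{u,v}|^2=1$ for every $t$. Thus if pretty good state transfer occurs between $u$ and some vertex $v$, then along the sequence $\{\tau_k\}$ we have $|U_M(\tau_k)_{u,v}|\to 1$, which forces $|U_M(\tau_k)_{u,u}|\to 0$ (since the diagonal term is one of the remaining nonnegative summands whose total mass vanishes). This immediately gives $\inf_{t>0}|U_M(t)_{u,u}|=0$, so $u$ is not sedentary.

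First I would set up the unitarity identity. Since $M$ is real symmetric, $U_M(t)=e^{itM}$ is unitary for every $t$, so its columns (equivalently rows, by symmetry of $M$) are orthonormal; in particular the squared norm of the $u$-th column is $1$, giving
\begin{equation}
\label{eq:unitrow}
|U_M(t)_{u,u}|^2+\sum_{v\neq u}|U_M(t)_{u,v}|^2=1\qquad\text{for all }t.
\end{equation}
Next I would invoke the PGST hypothesis: there is a vertex $v_0\neq u$ and a sequence $\{\tau_k\}\subseteq\mathbb{R}$ with $|U_M(\tau_k)_{u,v_0}|\to 1$ as $k\to\infty$. Since each summand in \eqref{eq:unitrow} is nonnegative and the $v_0$ term alone tends to $1$, the remaining terms, in particular $|U_M(\tau_k)_{u,u}|^2$, must tend to $0$. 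Hence $\inf_{t>0}|U_M(t)_{u,u}|\le \lim_{k\to\infty}|U_M(\tau_k)_{u,u}|=0$, so by Definition \ref{def} the vertex $u$ is not sedentary. This proves that a vertex admitting PGST cannot be sedentary.

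For the converse direction I would argue contrapositively in exactly the same way: if $u$ is sedentary, then $\inf_{t>0}|U_M(t)_{u,u}|\ge C>0$, so $|U_M(t)_{u,u}|$ is bounded away from $0$ for all $t>0$; but if $u$ were involved in PGST along some $\{\tau_k\}$, the argument above would force $|U_M(\tau_k)_{u,u}|\to 0$, contradicting the uniform lower bound $C$. Since the two statements are logically equivalent, establishing one direction via \eqref{eq:unitrow} in fact yields both, and I would phrase the writeup to make this symmetry explicit. I do not expect a genuine obstacle here: the entire content is the elementary unitarity identity \eqref{eq:unitrow} combined with the nonnegativity of the modulus-squared terms. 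The only point requiring a small amount of care is confirming that PGST is defined with $v\neq u$ (a vertex cannot have PGST with itself, that case being periodicity), so that the $v_0$ term is genuinely separate from the diagonal term in \eqref{eq:unitrow}; this is immediate from the definition of PGST given in the preliminaries.
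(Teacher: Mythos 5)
Your argument is correct: the unitarity identity $\sum_{v}|U_M(t)_{u,v}|^2=1$ together with nonnegativity of the summands immediately forces $|U_M(\tau_k)_{u,u}|\to 0$ along any PGST sequence, which is exactly the standard proof of this fact. The paper itself gives no proof, citing the result from an earlier reference, and your write-up supplies precisely the argument that underlies it, including the correct observation that the two stated directions are contrapositives of one another.
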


Proposition \ref{sedpgst} implies that sedentarines and PGST are mutually exclusive types of quantum state transfer, although it is possible that a given vertex in a graph exhibits neither sedentarines nor PGST. We illustrate the latter statement by providing an infinite family of graphs that satisfy this property.

\begin{example}
\label{star}
For each $n\geq 3$, consider  $K_{1,n}$ with central vertex $u$. Then $U_A(\frac{\pi}{2\sqrt{n}})_{u,u}=0$, and so $u$ is not sedentary. Moreover, since $u$ is not strongly cospectral with any vertex in $X$, it cannot exhibit PGST.
\end{example}

In Remark \ref{exsc}, we provide an infinite family of graphs containing periodic and strongly cospectral vertices that are neither sedentary nor involved in PST with respect to the adjacency and Laplacian matrix.

The following result is due to Monterde \cite[Theorem 11]{Monterde2023}.

\begin{theorem}
\label{eureka}
Let $u$ be a vertex of $X$ with $\sigma_u(M)=\{\lambda_1,\ldots,\lambda_r\}$, where $E_j$ is the orthogonal projection matrix corresponding to $\lambda_j$. If $S$ is a non-empty proper subset of $\sigma_u(M)$, say $S=\{\lambda_1,\ldots,\lambda_s\}$, such that
\begin{equation}
\label{eureka1}
\sum_{j=1}^s(E_j)_{u,u}=a
\end{equation}
for some $\frac{1}{2}\leq a<1$, then
\begin{equation}
\label{eureka2}
\lvert U_M(t)_{u,u}\rvert\geq \bigg|\sum_{j=1}^s e^{it\lambda_j}(E_j)_{u,u}\bigg|- (1-a)\quad \text{for all}\ t.
\end{equation}
Moreover, if there exists a time $t_1>0$ such that $\bigg|\sum_{j=1}^s e^{it_1\lambda_j}(E_j)_{u,u}\bigg|\geq 1-a$, and for all $j\in\{1,\ldots,s\}$ and $k\in\{s+1,\ldots,r\}$,
\begin{equation}
\label{eureka333}
e^{it_1(\lambda_1-\lambda_j)}=1\quad  \text{and}\quad e^{it_1(\lambda_{1}-\lambda_{k})}=-1,
\end{equation}
then equality holds in (\ref{eureka2}), in which case $\lvert U_M(t_1)_{u,u} \rvert=2a-1$ and $u$ is periodic at time $2t_1$.
\end{theorem}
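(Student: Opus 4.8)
The plan is to expand the diagonal amplitude via the spectral decomposition and exploit two elementary facts about the projection weights. Writing $U_M(t)_{u,u}=\sum_{j=1}^{r}e^{it\lambda_j}(E_j)_{u,u}$, I would first record that each $(E_j)_{u,u}=\lVert E_j\textbf{e}_u\rVert^2\geq 0$ and that $\sum_{j=1}^{r}(E_j)_{u,u}=(\textbf{e}_u)^T\big(\sum_j E_j\big)\textbf{e}_u=1$. Combined with the hypothesis $\sum_{j=1}^{s}(E_j)_{u,u}=a$, this forces $\sum_{k=s+1}^{r}(E_k)_{u,u}=1-a$, which is the quantity that will control the error term.

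For the inequality \eqref{eureka2}, I would split the amplitude as $U_M(t)_{u,u}=P+Q$, where $P=\sum_{j=1}^{s}e^{it\lambda_j}(E_j)_{u,u}$ is the contribution from $S$ and $Q=\sum_{k=s+1}^{r}e^{it\lambda_k}(E_k)_{u,u}$ is the remainder. The reverse triangle inequality gives $\lvert U_M(t)_{u,u}\rvert\geq \lvert P\rvert-\lvert Q\rvert$, and since $\lvert e^{it\lambda_k}\rvert=1$ and the weights are nonnegative, $\lvert Q\rvert\leq\sum_{k=s+1}^{r}(E_k)_{u,u}=1-a$. Substituting yields \eqref{eureka2} directly, valid for all $t$.

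For the equality clause I would feed the phase relations \eqref{eureka333} into the splitting at time $t_1$. The condition $e^{it_1(\lambda_1-\lambda_j)}=1$ says $e^{it_1\lambda_j}=e^{it_1\lambda_1}$ for every $j\leq s$, so $P(t_1)=e^{it_1\lambda_1}a$ and hence $\lvert P(t_1)\rvert=a\geq 1-a$ (so the stated threshold is automatic). Likewise $e^{it_1(\lambda_1-\lambda_k)}=-1$ gives $e^{it_1\lambda_k}=-e^{it_1\lambda_1}$ for $k>s$, so $Q(t_1)=-e^{it_1\lambda_1}(1-a)$. Adding the two blocks, $U_M(t_1)_{u,u}=e^{it_1\lambda_1}\big(a-(1-a)\big)=e^{it_1\lambda_1}(2a-1)$, whence $\lvert U_M(t_1)_{u,u}\rvert=2a-1$ (nonnegative since $a\geq\tfrac12$); comparing with the right-hand side of \eqref{eureka2}, which equals $\lvert P(t_1)\rvert-(1-a)=2a-1$, confirms that equality is attained. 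Finally, for periodicity at $2t_1$ I would square the phase relations: since $\big(e^{it_1(\lambda_1-\lambda_j)}\big)^2=1$ and $\big(e^{it_1(\lambda_1-\lambda_k)}\big)^2=1$, the single relation $e^{2it_1\lambda_m}=e^{2it_1\lambda_1}$ now holds simultaneously for \emph{all} $m\in\{1,\ldots,r\}$, so $U_M(2t_1)_{u,u}=e^{2it_1\lambda_1}\sum_{m=1}^{r}(E_m)_{u,u}=e^{2it_1\lambda_1}$ and $\lvert U_M(2t_1)_{u,u}\rvert=1$.

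I do not expect a genuine obstacle here; the argument is a structured application of the triangle inequality. The only point requiring care is the interpretation of \eqref{eureka333}: the two conditions are engineered so that at $t_1$ the two eigenvalue blocks point in opposite directions, realizing the equality case of the reverse triangle inequality, while squaring the phases at $2t_1$ realigns every block and forces periodicity. Making this dichotomy explicit is the conceptual heart of the write-up.
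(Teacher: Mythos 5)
Your argument is correct and complete: splitting the spectral sum over $\sigma_u(M)$ into the $S$-block and its complement, bounding the complement by $1-a$ via the reverse triangle inequality, and then checking that the phase conditions in (\ref{eureka333}) align the two blocks antipodally at $t_1$ (and realign all phases at $2t_1$) is exactly the standard route, and your observation that the threshold $\lvert\sum_{j\le s}e^{it_1\lambda_j}(E_j)_{u,u}\rvert\ge 1-a$ is automatic under (\ref{eureka333}) since $a\ge\tfrac12$ is a correct refinement. Note that the paper itself does not reprove this statement but imports it from an earlier reference, so there is no in-paper proof to diverge from; your write-up supplies the expected argument with no gaps.
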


\begin{remark}
From (\ref{eureka2}), it suffices to find a non-empty proper subset $S$ of $\sigma_u(M)$ with $\sum_{j\in S}(E_j)_{u,u}=a\geq \frac{1}{2}$ such that $ \left\lvert\sum_{j=1}^s e^{it\lambda_j}(E_j)_{u,u}\right\rvert- (1-a)$ is bounded away from 0 for all $t$. 
\end{remark}

We also have the following result, which characterizes the occurrence of (\ref{eureka333}). In what follows, we denote the largest power of two that divides an integer $a$ by $\nu_2(a)$.

\begin{lemma}
\label{ts}
Let $u\in V(X)$ and suppose $S\subseteq \sigma_u(M)$ satisfies (\ref{eureka1}). If $\phi(M,t)\in\mathbb{Z}[t]$, then (\ref{eureka333}) holds if and only if all of the following conditions hold
\begin{enumerate}
\item Either (a) all eigenvalues in $\sigma_{u}(M)$ are integers or (b) all eigenvalues in $\lambda_j\in \sigma_{u}(M)$ are quadratic integers of the form $\frac{1}{2}(a+b_j\sqrt{\Delta})$, where $a,b_j,\Delta$ are integers and $\Delta>1$ is square-free.
\item For all $\lambda_j,\lambda_m\in S$ and $\lambda_k,\lambda_{\ell}\in\sigma_{u}(M)\backslash S$, 
    $\nu_2(\frac{\lambda_j-\lambda_m}{\sqrt{\Delta}})>\nu_2(\frac{\lambda_k-\lambda_m}{\sqrt{\Delta}})=\nu_2(\frac{\lambda_{\ell}-\lambda_m}{\sqrt{\Delta}})$, where $\Delta=1$ whenever (1a) holds.
\end{enumerate}
Moreover, the minimum time in which (\ref{eureka333}) holds is $t_1=\frac{\pi}{g}$, where $g= \operatorname{gcd}(\mathcal{T})$, $\mathcal{T}=\{\frac{\lambda_1-\lambda}{\sqrt{\Delta}}: \lambda\in\sigma_{u}(M)\}$ and $\lambda_1\in S$ is fixed, and $u$ is periodic with $\rho=2t_1$.
\end{lemma}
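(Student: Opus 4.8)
The plan is to recast (\ref{eureka333}) as a commensurability-plus-parity condition on the eigenvalue differences and then read off (1) and (2) from it. I would fix the base point $\lambda_1\in S$ and, for a candidate time $t>0$, set $x_j=t(\lambda_1-\lambda_j)/\pi$ for each $\lambda_j\in\sigma_u(M)$. Since $e^{i\theta}=\pm1$ exactly when $\theta\in\pi\mathbb{Z}$, with the sign equal to $(-1)^{\theta/\pi}$, condition (\ref{eureka333}) holds at $t$ if and only if every $x_j$ is an integer and, moreover, $x_j$ is even precisely for $\lambda_j\in S$ and odd precisely for $\lambda_j\in\sigma_u(M)\setminus S$. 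I would then show that the two requirements ``all $x_j\in\mathbb{Z}$'' and ``correct parities'' force (1) and (2) respectively.

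For the first requirement, all $x_j\in\mathbb{Z}$ forces every ratio $(\lambda_1-\lambda_j)/(\lambda_1-\lambda_k)$ to be rational, i.e. the differences in $\sigma_u(M)$ are pairwise commensurable. Two facts turn this into (1): because $\phi(M,t)\in\mathbb{Z}[t]$ the eigenvalues are algebraic integers; and $\sigma_u(M)$ is closed under the Galois action, since $(E_j)_{u,u}=\lVert E_j\mathbf{e}_u\rVert^2\neq0$ and any automorphism $\sigma$ sends $E_j$ to $E_{j'}$ with $\lambda_{j'}=\sigma(\lambda_j)$, whence $(E_{j'})_{u,u}=\sigma((E_j)_{u,u})\neq0$ and $\sigma(\lambda_j)\in\sigma_u(M)$. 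Feeding pairwise commensurability into the dichotomy for conjugation-closed sets of algebraic integers (the classification underlying Godsil's characterization of periodic vertices) yields exactly the two alternatives in (1); that the rational part $a$ is common follows by applying the argument to the conjugate pairs $\lambda_j,\bar\lambda_j\in\sigma_u(M)$, whose real parts must therefore coincide. Conversely, (1) makes the differences commensurable for free. I expect this classification step to be the main obstacle, as it is the one genuinely number-theoretic ingredient; the remaining steps are bookkeeping with the $2$-adic valuation.

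Granting (1), I would write $\lambda_1-\lambda_j=\gamma_j\sqrt{\Delta}$ with $\gamma_j=(\lambda_1-\lambda_j)/\sqrt{\Delta}\in\tfrac{1}{2}\mathbb{Z}$ (and $\Delta=1$ in case (1a)), so that $\mathcal{T}=\{\gamma_j\}$ generates the group $g\mathbb{Z}$. The set of $t>0$ at which every $e^{it(\lambda_1-\lambda_j)}=\pm1$ is then precisely $\tfrac{\pi}{g\sqrt{\Delta}}\mathbb{Z}_{>0}$, whose least element is $t_1=\pi/(g\sqrt{\Delta})$ (equal to $\pi/g$ in case (1a)); at $t=nt_1$ one has $x_j=n\gamma_j/g$. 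Requiring the non-$S$ values to be odd forces $n$ odd and each $\gamma_k/g$ odd, after which the $S$-values are even if and only if each $\gamma_j/g$ is even. Since $\nu_2(g)=\min_m\nu_2(\gamma_m)$, these parity conditions read $\nu_2(\gamma_j)>\nu_2(\gamma_k)=\nu_2(\gamma_\ell)$ for $\lambda_j\in S$ and $\lambda_k,\lambda_\ell\notin S$, which is (2) with base $\lambda_1$. Because the correct parities are then already attained at $n=1$, the minimum admissible time is $t_1$, giving both implications at once.

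Two loose ends remain, both short. First, (2) is stated for an arbitrary base $\lambda_m\in S$, but the ultrametric identity $\nu_2(\gamma_j-\gamma_m)=\min\{\nu_2(\gamma_j),\nu_2(\gamma_m)\}$ whenever the two valuations differ shows that the condition with base $\lambda_m$ is equivalent to the one with base $\lambda_1$, so checking a single base suffices. Second, for periodicity: at $2t_1$ every $x_j$ is even, so $e^{2it_1(\lambda_1-\lambda_j)}=1$ for all $j$ and hence $U_M(2t_1)\mathbf{e}_u=e^{2it_1\lambda_1}\mathbf{e}_u$, giving $\lvert U_M(2t_1)_{u,u}\rvert=1$; and since $U_M(t)\mathbf{e}_u$ is a scalar multiple of $\mathbf{e}_u$ only when $t(\lambda_j-\lambda_1)\in2\pi\mathbb{Z}$ for all $j$, i.e. $t\in\tfrac{2\pi}{g\sqrt{\Delta}}\mathbb{Z}$, the minimum period is $\rho=2t_1$.
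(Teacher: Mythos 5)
Your argument is correct, and there is nothing in the paper to compare it against: Lemma \ref{ts} is stated without proof (it is the standard reduction of the $\pm 1$ phase conditions to the ratio condition plus a $2$-adic parity condition, as in the perfect-state-transfer literature), and your reduction to ``all $x_j=t(\lambda_1-\lambda_j)/\pi$ integral with the correct parities'' is exactly the intended route. The one genuinely nontrivial ingredient, as you note, is the dichotomy in (1); your justification (eigenvalues are algebraic integers since $\phi(M,t)$ is monic in $\mathbb{Z}[t]$, the eigenvalue support is Galois-closed because $(E_j)_{u,u}=\lVert E_j\mathbf{e}_u\rVert^2\neq 0$ is preserved by conjugation, and pairwise commensurability of the differences then forces Godsil's integer/quadratic dichotomy with a common rational part via the pure-$\sqrt{\Delta}$ difference $\lambda_j-\bar{\lambda}_j$) is the standard proof, so invoking it is appropriate here. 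The change-of-base-point step via the ultrametric inequality and the identification $\nu_2(g)=\min_m\nu_2(\gamma_m)$ are both right. One point worth flagging explicitly rather than silently correcting: your computation gives $t_1=\pi/(g\sqrt{\Delta})$, whereas the lemma asserts $t_1=\pi/g$; in case (1b) the stated value cannot be right, since $e^{i(\pi/g)(\lambda_1-\lambda_j)}=e^{i\pi(\gamma_j/g)\sqrt{\Delta}}\neq\pm 1$ for irrational $\sqrt{\Delta}$ and $\gamma_j\neq 0$ (compare $P_3$, where the relevant time is $\pi/\sqrt{2}$, not $\pi$). Your value is the correct one, and the statement should be read with the extra factor of $\sqrt{\Delta}$ in the denominator.
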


\begin{remark}
If $u$ and $v$ are strongly cospectral in $X$ and Lemma \ref{ts}(1-2) holds, then PST occurs between $u$ and $v$ at $t_1$ and $S=\sigma_{uv}^+(M)$. In this case, $a=\frac{1}{2}$ so (\ref{eureka1}) yields $U_M(t_1)_{u,u}=0$. Thus, $u$ is not sedentary. 
\end{remark}

We also state Lemmas 9 and 12 in \cite{Monterde2023}, respectively.

\begin{lemma}
\label{persed}
Let $u$ be a periodic vertex in $X$. Then $u$ is sedentary if and only if $U_M(t)_{u,u}\neq 0$ for all $t\in[0,\rho]$. In particular, if $u$ is sedentary, then it is tightly $C$-sedentary, where $C=\min_{t\in[0,\rho]} |U(t)_{u,u}|$.
\end{lemma}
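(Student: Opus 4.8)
The plan is to exploit periodicity to replace the infimum over the unbounded ray $t>0$ by a minimum over the compact interval $[0,\rho]$, on which continuity forces the extremum to be attained. The crux is the observation that periodicity of $u$ makes the function $t\mapsto\lvert U_M(t)_{u,u}\rvert$ itself periodic with period $\rho$.

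First I would establish this periodicity. Since $\lvert U_M(\rho)_{u,u}\rvert=1$ and $U_M(\rho)$ is unitary, the vectors $\textbf{e}_u$ and $U_M(\rho)\textbf{e}_u$ are unit vectors whose inner product $\textbf{e}_u^{T}U_M(\rho)\textbf{e}_u=U_M(\rho)_{u,u}$ has modulus $1$; the equality case of the Cauchy--Schwarz inequality then yields $U_M(\rho)\textbf{e}_u=\gamma\textbf{e}_u$ for some scalar $\gamma$ with $\lvert\gamma\rvert=1$. Using the semigroup identity $U_M(t+\rho)=U_M(t)U_M(\rho)$, which follows directly from $U_M(t)=e^{itM}$, I compute
\[
U_M(t+\rho)_{u,u}=\textbf{e}_u^{T}U_M(t)U_M(\rho)\textbf{e}_u=\gamma\,\textbf{e}_u^{T}U_M(t)\textbf{e}_u=\gamma\,U_M(t)_{u,u},
\]
so that $\lvert U_M(t+\rho)_{u,u}\rvert=\lvert U_M(t)_{u,u}\rvert$ for all $t$. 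Hence $\lvert U_M(t)_{u,u}\rvert$ is periodic with period $\rho$, and in particular
\[
\inf_{t>0}\lvert U_M(t)_{u,u}\rvert=\inf_{t\in[0,\rho]}\lvert U_M(t)_{u,u}\rvert=\min_{t\in[0,\rho]}\lvert U_M(t)_{u,u}\rvert,
\]
the last equality holding because $\lvert U_M(t)_{u,u}\rvert$ is continuous on the compact set $[0,\rho]$.

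With this reduction the biconditional is immediate. If $U_M(t)_{u,u}\neq 0$ throughout $[0,\rho]$, then the continuous function $\lvert U_M(t)_{u,u}\rvert$ attains a strictly positive minimum $C=\min_{t\in[0,\rho]}\lvert U_M(t)_{u,u}\rvert>0$ on this interval, so $\inf_{t>0}\lvert U_M(t)_{u,u}\rvert=C>0$ and $u$ is sedentary; since the infimum is attained at some $t_0\in[0,\rho]$, which is necessarily positive as $U_M(0)_{u,u}=1$, the vertex $u$ is in fact tightly $C$-sedentary in the sense of Definition \ref{def}. Conversely, if $U_M(t_0)_{u,u}=0$ for some $t_0\in[0,\rho]$, then $t_0>0$ because $U_M(0)=I$, whence $\inf_{t>0}\lvert U_M(t)_{u,u}\rvert=0$ and $u$ is not sedentary.

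The only step requiring genuine care is the passage from the periodicity hypothesis $\lvert U_M(\rho)_{u,u}\rvert=1$ to the eigenvector relation $U_M(\rho)\textbf{e}_u=\gamma\textbf{e}_u$; everything afterward is a routine continuity-and-compactness argument. I do not anticipate a real obstacle even there, since the Cauchy--Schwarz equality condition disposes of it cleanly.
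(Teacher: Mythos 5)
Your proof is correct: periodicity of $u$ gives $U_M(\rho)\textbf{e}_u=\gamma\textbf{e}_u$ with $\lvert\gamma\rvert=1$ (unitarity/Cauchy--Schwarz), hence $t\mapsto\lvert U_M(t)_{u,u}\rvert$ is $\rho$-periodic and the infimum over $t>0$ collapses to a minimum over the compact interval $[0,\rho]$, from which both directions and the tightness claim follow. The present paper does not prove this lemma---it is quoted from the cited reference---but your argument is precisely the standard one for this statement, and every step checks out, including the small point that the minimizer can be taken positive (if the minimum is less than $1$ it cannot occur at $t=0$, and if it equals $1$ it occurs at $t=\rho$).
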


\begin{lemma}
\label{eurekarem2}
Let $u$ be a vertex of $X$ with $\sigma_u(M)=\{\lambda_1,\ldots,\lambda_r\}$, where $E_j$ is the orthogonal projection matrix corresponding to $\lambda_j$. Suppose $S$ is a non-empty proper subset of $\sigma_u(M)$, say $S=\{\lambda_1,\ldots,\lambda_s\}$, such that (\ref{eureka1}) holds for some $\frac{1}{2}\leq a<1$. The following are equivalent.
\begin{enumerate}
\item If $\ell_j$ and $m_j$ are integers such that $\sum_{j=1}^sm_j\lambda_j+\sum_{j=s+1}^r\ell_j\lambda_j=0$ and $\sum_{j=1}^sm_j+\sum_{j=s+1}^r\ell_j=0$, then $\sum_{j=1}^sm_j$ is even.
\item There exists a sequence $\{t_k\}\subseteq \mathbb{R}$ such that $\lim_{k\rightarrow\infty}\lvert U_M(t_k)_{u,u} \rvert=2a-1$.
\end{enumerate}
\end{lemma}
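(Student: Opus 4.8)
The plan is to translate both conditions into statements about the closure of the phase orbit and to connect them through Kronecker's approximation theorem. Write $p_j=(E_j)_{u,u}$, so that $p_j>0$ for each $\lambda_j\in\sigma_u(M)$, $\sum_{j=1}^r p_j=1$, $\sum_{j=1}^s p_j=a$, and $U_M(t)_{u,u}=\sum_{j=1}^r e^{it\lambda_j}p_j$. Let $\mathcal{C}\subseteq\mathbb{T}^r$ be the closure of the one-parameter subgroup $\{(e^{it\lambda_1},\ldots,e^{it\lambda_r}):t\in\mathbb{R}\}$. Because $t\mapsto|U_M(t)_{u,u}|$ is almost periodic, the closure of its range equals $\{\,|\sum_{j=1}^r z_j p_j|:(z_j)\in\mathcal{C}\,\}$; hence condition (2) is equivalent to the existence of $(z_j)\in\mathcal{C}$ with $|\sum_{j=1}^r z_j p_j|=2a-1$. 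Kronecker's theorem identifies $\mathcal{C}$ as exactly the set of $(z_j)\in\mathbb{T}^r$ for which $\prod_{j=1}^r z_j^{n_j}=1$ whenever $(n_j)\in\mathbb{Z}^r$ satisfies $\sum_{j=1}^r n_j\lambda_j=0$.

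First I would handle (1) $\Rightarrow$ (2) by producing a \emph{block-aligned} point $(z_j)\in\mathcal{C}$, that is, $z_j=c$ for $j\le s$ and $z_k=-c$ for $k>s$ with $c=e^{i\gamma}$ of modulus one. Such a point gives $\sum_{j=1}^r z_j p_j=c\,(a-(1-a))=c\,(2a-1)$, so $|\sum_{j=1}^r z_j p_j|=2a-1$ and (2) holds. By the Kronecker description, membership of the block-aligned point in $\mathcal{C}$ amounts to $e^{i\gamma\sum_j n_j}(-1)^{\sum_{k>s}n_k}=1$ for every relation $(n_j)$. For relations with $\sum_j n_j=0$ this is precisely the parity assertion (1), since then $\sum_{k>s}n_k\equiv\sum_{j\le s}n_j\pmod 2$; for the remaining relations one must choose $\gamma$ to meet the leftover congruences. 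The point to verify is that, granting (1), the rule $\sum_j n_j\mapsto\pi\sum_{k>s}n_k$ descends to a well-defined homomorphism on the subgroup $\{\sum_j n_j:(n_j)\text{ a relation}\}=d\mathbb{Z}\subseteq\mathbb{Z}$, so that a compatible $\gamma$ exists and the block-aligned point indeed lies in $\mathcal{C}$.

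For (2) $\Rightarrow$ (1), compactness of $\mathbb{T}^r$ lets me pass to a subsequence of $\{t_k\}$ along which $(e^{it_k\lambda_j})_j\to(z_j)\in\mathcal{C}$, whence $|\sum_{j=1}^r z_j p_j|=2a-1$. I would then argue that this limit point must be block-aligned; once that is known, applying the Kronecker condition to $(z_j)$ recovers (1) exactly as in the previous paragraph. To force block-alignment I would decompose $\sum_{j} z_j p_j=P+Q$ with $P=\sum_{j\le s}z_j p_j$ and $Q=\sum_{k>s}z_k p_k$, note $|P|\le a$ and $|Q|\le 1-a$, and study the equality case of $|P+Q|=a-(1-a)$ under the hypothesis $a\ge\tfrac12$, using that $(z_j)$ lies in the \emph{group} $\mathcal{C}$ and not merely in the full torus.

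I expect this final step to be the main obstacle: attaining the value $2a-1$ does not by itself force $|P|=a$ and $|Q|=1-a$ through naive triangle-inequality reasoning, so one must genuinely invoke the lattice of integer relations cutting out $\mathcal{C}$ together with $a\ge\tfrac12$, effectively reducing to the regime in which the block $S$ acts like a single dominant eigenvalue (the situation underlying the equality conclusion of Theorem \ref{eureka}). Granting block-alignment, the passage to condition (1) is immediate from the Kronecker characterization of $\mathcal{C}$, completing the equivalence.
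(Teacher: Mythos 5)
This paper does not actually prove Lemma \ref{eurekarem2}: it is quoted from Lemma 12 of \cite{Monterde2023}, so there is no in-paper proof to compare against and I can only judge your argument on its own terms. Your framework (almost periodicity, the orbit closure $\mathcal{C}$ in the torus, and Kronecker's identification of $\mathcal{C}$ with the annihilator of the lattice of integer relations among the $\lambda_j$) is the right one, and your proof of (1) $\Rightarrow$ (2) is correct and complete in outline: condition (1) is precisely the well-definedness needed for the character $\sum_j n_j\mapsto(-1)^{\sum_{k>s}n_k}$ to descend to the subgroup $d\mathbb{Z}=\{\sum_j n_j:(n_j)\ \text{a relation}\}$, a compatible $\gamma$ then exists, the block-aligned point lies in $\mathcal{C}$, and evaluating there gives $\lvert\sum_j z_jp_j\rvert=\lvert a-(1-a)\rvert=2a-1$.

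The obstacle you flag in (2) $\Rightarrow$ (1) is, however, a genuine gap, and the route you propose cannot close it: a point of $\mathcal{C}$ at which $\lvert\sum_j z_jp_j\rvert=2a-1$ need not be block-aligned once $s\geq 2$. Concretely, take $r=4$, $p_1=\cdots=p_4=\tfrac{1}{4}$, $(\lambda_1,\dots,\lambda_4)=(0,1,2,3)$ and $S=\{\lambda_1,\lambda_2\}$, so $a=\tfrac12$; at $t=\pi/2$ one gets $\tfrac14(1+i-1-i)=0=2a-1$, yet the relation $(m_1,m_2,\ell_3,\ell_4)=(-1,2,-1,0)$ has $m_1+m_2$ odd, so (1) fails while (2) holds. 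Since your argument uses only the abstract data $(\lambda_j,p_j,S)$, and the implication is false for some such data, no amount of massaging the equality case of the triangle inequality inside $\mathcal{C}$ will rescue it; you would need extra input. The implication does close, cleanly, when $s=1$: then $\lvert P\rvert\equiv a$, so $\lvert U_M(t)_{u,u}\rvert\geq a-\lvert Q\rvert\geq 2a-1$ for all $t$, and $\lvert U_M(t_k)_{u,u}\rvert\to 2a-1$ forces $e^{-it_k\lambda_1}Q(t_k)\to-(1-a)=-\sum_{j>1}p_j$, whence $e^{it_k(\lambda_j-\lambda_1)}\to-1$ for every $j>1$ by the equality case of the triangle inequality; the subsequential limit in $\mathcal{C}$ is then block-aligned and the Kronecker description yields (1). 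Every application of Lemma \ref{eurekarem2} in this paper takes $S$ to be a singleton, so the honest fix is to prove (and state) the converse direction under that hypothesis, or to supply a separate argument excluding non-aligned attainment for the particular spectra at hand.
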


We close this section with a remark that Lemmas \ref{ts} and \ref{eurekarem2} are equivalent for periodic vertices.

\section{Graphs with twins}\label{secGT}

Denote the neighbourhood of a vertex $u$ in $X$ by $N_X(u)$. Two vertices $u$ and $v$ of $X$ are \textit{twins} if
\begin{enumerate}
\item $N_X(u)\backslash \{u,v\}=N_X(v)\backslash \{u,v\}$,
\item the edges $(u,w)$ and $(v,w)$ have the same weight for each $w\in N_X(u)\backslash \{u,v\}$, and
\item the loops on $u$ and $v$ have the same weight, and this weight is zero if those loops are absent.
\end{enumerate}
We allow twins to be adjacent. A maximal subset $T=T(\omega,\eta)$ of $V(X)$ with at least two elements is a \textit{twin set} in $X$ if the vertices in $T$ are pairwise twins, each $u\in T$ has a loop of weight $\omega$, and every pair of vertices in $T$ are connected by an edge with weight $\eta$. In particular, if $X$ is a simple unweighted graph, then $\omega=0$ and $\eta\in\{0,1\}$. As an example, the apexes of a double cone form a twin set of size two. Note that the vertices in a twin set $T$ are pairwise cospectral \cite[Corollary 2.11]{Monterde2022}. Thus, if $u,v\in T$, then $U_M(t)_{u,u}=U_M(t)_{v,v}$ for all $t$, and so a vertex in $T$ is sedentary if and only if each vertex in $T$ is sedentary.

The following fact is due to Monterde \cite[Lemma 2.9]{Monterde2022}.

\begin{lemma}
\label{alphabeta}
Let $T=T(\omega,\eta)$ be a twin set in $X$. Then $u,v\in T$ if and only if
$\textbf{e}_u-\textbf{e}_v$ is an eigenvector associated to the eigenvalue $\theta=q \operatorname{deg}(u)+\omega-\eta$ of $M$.
\end{lemma}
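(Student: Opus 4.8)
The plan is to prove the equivalence by a direct entrywise verification of the eigenvalue equation $M(\textbf{e}_u-\textbf{e}_v)=\theta(\textbf{e}_u-\textbf{e}_v)$. Writing $M=qD+A$ with $D$ diagonal, the $w$-th coordinate of $M(\textbf{e}_u-\textbf{e}_v)$ equals $M_{w,u}-M_{w,v}$, and I would organize the whole argument around the three cases $w\notin\{u,v\}$, $w=u$, and $w=v$, matching each coordinate against the corresponding coordinate $\theta\bigl((\textbf{e}_u)_w-(\textbf{e}_v)_w\bigr)$ of the right-hand side.

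First I would establish the forward direction, assuming $u,v\in T$. For $w\notin\{u,v\}$ the diagonal matrix $D$ contributes nothing, so $M_{w,u}-M_{w,v}=A_{w,u}-A_{w,v}$, and twin conditions (1)--(2) give $A_{w,u}=A_{w,v}$ (the edges to $w$ are either both absent or both present with equal weight), so this coordinate vanishes as required. For $w=u$, using the loop weight $A_{u,u}=\omega$ and the edge weight $A_{u,v}=\eta$, I get $M_{u,u}-M_{u,v}=q\operatorname{deg}(u)+\omega-\eta=\theta$. The case $w=v$ is symmetric once one notes that twins have equal degree, since $\operatorname{deg}(u)=2\omega+\eta+\sum_{j\neq u,v}\omega_{u,j}=\operatorname{deg}(v)$; this yields $M_{v,u}-M_{v,v}=\eta-q\operatorname{deg}(u)-\omega=-\theta$. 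Together these show that $\textbf{e}_u-\textbf{e}_v$ is a $\theta$-eigenvector.

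For the converse I would run the same computation backwards: the equation $M(\textbf{e}_u-\textbf{e}_v)=\theta(\textbf{e}_u-\textbf{e}_v)$ read at each $w\notin\{u,v\}$ forces $A_{w,u}=A_{w,v}$, recovering conditions (1)--(2); reading it at $w=u$ and $w=v$ and using the symmetry $M_{u,v}=M_{v,u}$ then constrains the diagonal and the edge $(u,v)$, which I would combine with the degree identity $\operatorname{deg}(u)-\operatorname{deg}(v)=2(A_{u,u}-A_{v,v})$ (a consequence of the external neighbourhoods already agreeing) to deduce the loop condition (3) and locate $u,v$ in $T$.

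The forward direction is essentially bookkeeping, so I expect the main obstacle to lie in the converse, specifically in disentangling the loop weight $\omega$ and the edge weight $\eta$ from the single scalar $\theta$: the row-$u$ equation only delivers the difference $A_{u,u}-A_{u,v}=\omega-\eta$, not the two weights individually, so recovering membership in the twin set $T$ with its prescribed parameters requires feeding in the structural information that the members of $T$ share loop weight $\omega$ and pairwise edge weight $\eta$, and checking that $\operatorname{deg}(u)=\operatorname{deg}(v)$ genuinely follows rather than being assumed.
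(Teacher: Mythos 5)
The paper itself does not prove this lemma; it is imported verbatim from \cite{Monterde2022}*{Lemma 2.9}, so there is no in-paper argument to compare against. Your forward direction is correct and is the natural entrywise verification: conditions (1)--(2) of the twin definition kill every coordinate $w\notin\{u,v\}$, and the $w=u$ and $w=v$ coordinates evaluate to $\theta$ and $-\theta$ once you note $\operatorname{deg}(u)=\operatorname{deg}(v)$, which your degree computation establishes. That implication is also the only part of the lemma the present paper actually uses (to place $\theta$ in $\sigma_u(M)$ and to compute $E_\theta$ in Theorem \ref{sed2}).

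The converse is where your proposal has a genuine gap, and the obstacle you flag in your last paragraph is real rather than cosmetic. Two concrete problems. First, subtracting the $w=u$ and $w=v$ coordinate equations gives $q\bigl(\operatorname{deg}(u)-\operatorname{deg}(v)\bigr)+A_{u,u}-A_{v,v}=0$, and your identity $\operatorname{deg}(u)-\operatorname{deg}(v)=2(A_{u,u}-A_{v,v})$ turns this into $(2q+1)(A_{u,u}-A_{v,v})=0$; when $q=-\tfrac{1}{2}$ this is vacuous, so in the weighted-with-loops setting of this paper you cannot even conclude that the two loop weights agree --- indeed two vertices with identical external neighbourhoods but loops of different weights satisfy the eigenvector equation for $M_{-1/2}$ while violating twin condition (3). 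Second, even for $q\neq-\tfrac{1}{2}$ the row-$u$ equation only yields $A_{u,u}-A_{u,v}=\omega-\eta$, never $A_{u,u}=\omega$ and $A_{u,v}=\eta$ separately, so membership in the \emph{specific} set $T=T(\omega,\eta)$ does not follow: a twin pair with parameters $\omega',\eta'$ satisfying $\omega'-\eta'=\omega-\eta$ and the appropriate degree produces the same eigenvalue and eigenvector relation while lying outside $T$. Your plan of ``feeding in the structural information that the members of $T$ share loop weight $\omega$ and edge weight $\eta$'' is circular as stated, since that information is part of what the converse must deliver. To make the converse a theorem you must either weaken it (e.g., conclude only that $u$ and $v$ are twins of each other, forming some twin set) or add hypotheses such as simplicity and unweightedness, under which $\omega=0$ and $\eta\in\{0,1\}$ are forced and the difference $\omega-\eta$ does determine $\eta$.
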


Our next result is due to Kirkland et al.\ \cite[Corollary 1]{Kirkland2023}.

\begin{theorem}
\label{yipee}
Let $T$ be a twin set in $X$. Then for any two vertices $u$ and $v$ in $T$,
\begin{center}
$\lvert U_M(t)_{u,u}\rvert+ \lvert U_M(t)_{u,v}\rvert\geq 1\quad $ for all $t\in\mathbb{R}$.
\end{center}
\end{theorem}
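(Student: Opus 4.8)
The plan is to exploit the eigenvector produced by the twin structure, as recorded in Lemma \ref{alphabeta}. Since $u,v\in T$ with $u\neq v$, that lemma guarantees that $\textbf{e}_u-\textbf{e}_v$ is an eigenvector of $M$ associated to the eigenvalue $\theta=q\operatorname{deg}(u)+\omega-\eta$. Because $U_M(t)=e^{itM}$ acts on any eigenvector merely by scalar multiplication, the first step is to compute
\begin{equation*}
U_M(t)(\textbf{e}_u-\textbf{e}_v)=e^{it\theta}(\textbf{e}_u-\textbf{e}_v)\qquad\text{for all }t.
\end{equation*}

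Next I would extract a single coordinate from this vector identity. The vectors $U_M(t)\textbf{e}_u$ and $U_M(t)\textbf{e}_v$ are the $u$-th and $v$-th columns of $U_M(t)$, so their $u$-th entries are $U_M(t)_{u,u}$ and $U_M(t)_{u,v}$, respectively; meanwhile the $u$-th coordinate of $\textbf{e}_u-\textbf{e}_v$ equals $1$ precisely because $u\neq v$. Reading off the $u$-th entry of both sides therefore gives
\begin{equation*}
U_M(t)_{u,u}-U_M(t)_{u,v}=e^{it\theta},
\end{equation*}
and taking moduli yields $\lvert U_M(t)_{u,u}-U_M(t)_{u,v}\rvert=1$ for every real $t$.

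The argument then closes with the ordinary triangle inequality: for any complex numbers $a,b$ one has $\lvert a\rvert+\lvert b\rvert\geq\lvert a-b\rvert$, whence
\begin{equation*}
\lvert U_M(t)_{u,u}\rvert+\lvert U_M(t)_{u,v}\rvert\geq\lvert U_M(t)_{u,u}-U_M(t)_{u,v}\rvert=1.
\end{equation*}
I do not expect a genuine obstacle here: all of the substance lies in recognizing that twins furnish the eigenvector $\textbf{e}_u-\textbf{e}_v$, after which the bound is forced. The only points demanding care are the coordinate bookkeeping in the middle step and the (easy) fact that $\lvert e^{it\theta}\rvert=1$ since $\theta$ is real because $M$ is symmetric.
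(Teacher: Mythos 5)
Your proof is correct and matches the intended argument: the paper does not prove Theorem \ref{yipee} itself but cites it from Kirkland et al., and the cited proof rests on exactly the identity you derive, namely $U_M(t)_{u,u}-U_M(t)_{u,v}=e^{it\theta}$ coming from the twin eigenvector $\textbf{e}_u-\textbf{e}_v$ of Lemma \ref{alphabeta}, followed by the triangle inequality. Nothing further is needed.
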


\begin{corollary}
\label{yipee1}
Let $T$ be a twin set in $X$. Then a vertex $u\in T$ is either sedentary or involved in pretty good state transfer with some vertex $v\neq u$ in $X$. If the latter holds, then $T=\{u,v\}$. 
\end{corollary}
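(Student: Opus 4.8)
The plan is to derive this corollary directly from Theorem \ref{yipee}, which is the key inequality
\[
\lvert U_M(t)_{u,u}\rvert + \lvert U_M(t)_{u,v}\rvert \geq 1
\]
valid for all $t$ whenever $u,v$ lie in a common twin set $T$. The dichotomy between sedentariness and PGST will fall out of a case analysis on the quantity $\inf_{t>0}\lvert U_M(t)_{u,u}\rvert$. If this infimum is positive, then $u$ is sedentary by Definition \ref{def} and we are done. So the real content is the case $\inf_{t>0}\lvert U_M(t)_{u,u}\rvert = 0$, where I must manufacture PGST from $u$ to a specific partner vertex.

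First I would suppose $\inf_{t>0}\lvert U_M(t)_{u,u}\rvert = 0$, so that there is a sequence $\{t_k\}$ with $\lvert U_M(t_k)_{u,u}\rvert \to 0$. Pick any $v\neq u$ in $T$ (such a $v$ exists since a twin set has at least two elements). Applying Theorem \ref{yipee} along this sequence gives $\lvert U_M(t_k)_{u,v}\rvert \geq 1 - \lvert U_M(t_k)_{u,u}\rvert \to 1$, and since $U_M(t)$ is unitary each entry has modulus at most $1$, forcing $\lvert U_M(t_k)_{u,v}\rvert \to 1$. By the definition of PGST this is exactly pretty good state transfer from $u$ to $v$. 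This establishes the main dichotomy.

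It remains to prove the final clause: if $u$ is not sedentary, then $T = \{u,v\}$, i.e.\ the twin set has size exactly two. The main obstacle is here, and the idea is that a twin set of size three or more forces sedentariness, contradicting our assumption. I would invoke the earlier summarized fact (from Monterde's work) that a vertex with at least two twins is sedentary; indeed if $\lvert T\rvert \geq 3$ then $u$ has two other twins, so $u$ is sedentary, contradicting $\inf_{t>0}\lvert U_M(t)_{u,u}\rvert = 0$. Hence $\lvert T\rvert = 2$, so $T=\{u,v\}$. Alternatively, one can argue via strong cospectrality: PGST from $u$ requires $u$ to be strongly cospectral with its partner, and strong cospectrality is impossible when $u$ has two or more twins because the multiplicity of the twin eigenvalue $\theta = q\operatorname{deg}(u)+\omega-\eta$ from Lemma \ref{alphabeta} is too large for the projection $E\be_u$ to be a scalar multiple of $E\be_v$ alone.

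The step I expect to be the genuine obstacle is pinning down why PGST forces the partner to lie in $T$ and why the twin set must have size two; the sedentariness/PGST split itself is an immediate consequence of Theorem \ref{yipee} together with unitarity. The cleanest route is to lean on the previously established result that three or more mutual twins guarantee sedentariness, so that non-sedentariness leaves only the size-two possibility, and in that case Theorem \ref{yipee} identifies $v$ as the unique PGST partner.
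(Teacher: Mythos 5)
Your argument is correct and follows essentially the same route as the paper: the dichotomy is read off from Theorem \ref{yipee} by examining $\inf_{t>0}\lvert U_M(t)_{u,u}\rvert$ together with unitarity, and the size-two conclusion rests on the fact that a vertex in a twin set of size at least three is forced to be sedentary (equivalently, cannot be strongly cospectral with any vertex). The only cosmetic difference is that the paper derives the second clause by noting that PGST forces strong cospectrality and then citing the twin-vertex result that strong cospectrality forces $T=\{u,v\}$ --- the alternative route you yourself sketch at the end.
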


\begin{proof}
Theorem \ref{yipee} yields the first statement. In particular, if $u\in T$ is  involved in PGST with $v$, then they are strongly cospectral, and so \cite[Theorem 3.9(2)]{Monterde2022} yields the second statement. 
\end{proof}
 
Since strong cospectrality is a necessary condition for PGST, Corollary \ref{yipee1} yields the next result.

\begin{corollary}
\label{yipee2}
Let $T$ be a twin set in $X$. If $u\in T$ is not involved in strong cospectrality, then each $u\in T$ is sedentary.
\end{corollary}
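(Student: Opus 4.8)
The plan is to read this as a direct corollary of the dichotomy in Corollary \ref{yipee1}, with the hypothesis used solely to eliminate one branch. First I would invoke Corollary \ref{yipee1}, which asserts that every vertex $u\in T$ is either sedentary or involved in pretty good state transfer with some vertex $v\neq u$ in $X$. The hypothesis that $u$ is not involved in strong cospectrality is exactly what kills the second branch: strong cospectrality between $u$ and $v$ is a necessary condition for pretty good state transfer between them (as emphasized in the sentence preceding the statement), so a vertex participating in no strong cospectrality relation cannot be an endpoint of pretty good state transfer. Hence the first branch of the dichotomy must hold, and $u$ is sedentary.

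The second step is to propagate sedentariness from this one vertex to the entire twin set. Here I would use the fact, recorded right after the definition of a twin set, that the vertices of $T$ are pairwise cospectral, whence $U_M(t)_{w,w}=U_M(t)_{u,u}$ for all $t$ and every $w\in T$. Consequently $\inf_{t>0}\lvert U_M(t)_{w,w}\rvert=\inf_{t>0}\lvert U_M(t)_{u,u}\rvert>0$, so if one vertex of $T$ is sedentary then every vertex of $T$ is sedentary with the same constant $C$. Combining the two steps yields the claim.

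The only genuinely subtle point, and the one I would flag as the main obstacle, is the quantifier structure: the hypothesis concerns a single vertex $u\in T$ while the conclusion is about all of $T$. The resolution is precisely the cospectrality observation above, which makes sedentariness a property of the twin set as a whole rather than of an individual vertex; thus it suffices for just one vertex of $T$ to escape strong cospectrality. No further computation is required, since everything reduces to citing Corollary \ref{yipee1}, the necessity of strong cospectrality for pretty good state transfer, and pairwise cospectrality within $T$.
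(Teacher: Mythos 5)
Your proposal is correct and matches the paper's argument, which derives the corollary in one line from Corollary \ref{yipee1} together with the fact that strong cospectrality is necessary for pretty good state transfer. Your additional remark about propagating sedentariness across $T$ via pairwise cospectrality of twins is exactly the observation the paper records just after defining twin sets, so nothing is missing.
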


We now prove one of our main results, which characterizes sedentariness in twin vertices.

\begin{theorem}
\label{twinschar}
Let $T$ be a twin set in $X$ and consider the eigenvalue $\theta$ in Lemma \ref{alphabeta}. Then each vertex in $T$ is sedentary if and only if one of the conditions below hold.
\begin{enumerate}
\item Either (i) $|T|\geq 3$ or (ii) $T=\{u,v\}$ and there is an eigenvector $\textbf{w}\notin\operatorname{span}\{\textbf{e}_u-\textbf{e}_v\}$ of $M$ associated with $\theta$ such that $\textbf{w}^T\textbf{e}_u\neq 0$ or $\textbf{w}^T\textbf{e}_v\neq 0$. Here, any vertex in $T$ cannot exhibit strong cospectrality.
\item $\theta$ is a simple eigenvalue of $M$, so that $T=\{u,v\}$ and $u$ and $v$ are strongly cospectral vertices, and there are integers $m_j$ such that
\begin{equation*}
\sum_{\lambda_j\in\sigma_{uv}^+(M)}m_j(\lambda_j-\theta)=0\quad \text{and}\quad \sum_{\lambda_j\in\sigma_{uv}^+(M)}m_j\ \text{is odd}.
\end{equation*}
If we add that $\phi(M,t)\in\mathbb{Z}[t]$ and $u$ is periodic, then the latter statement is equivalent to each eigenvalue $\lambda_j\in\sigma_{uv}^+(M)$ is of the form $\lambda_j=\theta+b_j\sqrt{\Delta}$, where $b_j$ is an integer and either $\Delta=1$ or $\Delta>1$ is a square-free integer and the $\nu_2(b_j)$'s are not all equal. In this case, $u$ is tightly sedentary.
\end{enumerate}
\end{theorem}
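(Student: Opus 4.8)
The plan is to split the proof along the dichotomy established by Corollary \ref{yipee1}: every vertex in a twin set is either sedentary or in PGST, and PGST forces $T=\{u,v\}$ with $u,v$ strongly cospectral. So I would first analyze the multiplicity of $\theta$, since $\textbf{e}_u-\textbf{e}_v$ is always a $\theta$-eigenvector by Lemma \ref{alphabeta}. The cleanest organizing observation is that $\theta$ is a \emph{simple} eigenvalue exactly when $T=\{u,v\}$ \emph{and} no additional $\theta$-eigenvector meets $u$ or $v$; otherwise $\theta$ has geometric multiplicity at least two through vectors supported on $\{u,v\}$. This matches the two cases of the statement precisely: case (1) is the ``$\theta$ non-simple (as seen from $u,v$)'' regime, and case (2) is the ``$\theta$ simple'' regime.

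\textbf{Case (1).} Here I would argue that the extra $\theta$-eigenvector content at $u$ destroys strong cospectrality. If $|T|\geq 3$, then $\operatorname{span}\{\textbf{e}_u-\textbf{e}_v : u,v\in T\}$ is at least two-dimensional inside the $\theta$-eigenspace, and one checks directly that $E_\theta\textbf{e}_u \ne \pm E_\theta\textbf{e}_v$ for a suitable pair (this is essentially \cite[Theorem 3.9(2)]{Monterde2022} invoked in Corollary \ref{yipee1}). Likewise, in the $T=\{u,v\}$ subcase, the hypothesized $\textbf{w}$ with $\textbf{w}^T\textbf{e}_u\ne 0$ or $\textbf{w}^T\textbf{e}_v\ne 0$ adds a component to $E_\theta\textbf{e}_u$ that is \emph{not} antisymmetric under swapping $u,v$, again breaking $E_\theta\textbf{e}_u=\pm E_\theta\textbf{e}_v$. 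Since strong cospectrality fails, Corollary \ref{yipee2} delivers sedentariness immediately, and the ``cannot exhibit strong cospectrality'' clause is recorded along the way.

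\textbf{Case (2).} When $\theta$ is simple, $\textbf{e}_u-\textbf{e}_v$ spans the $\theta$-eigenspace, so $E_\theta\textbf{e}_u=-E_\theta\textbf{e}_v=\tfrac12(\textbf{e}_u-\textbf{e}_v)$ while every other eigenprojection treats $u,v$ symmetrically; this is exactly strong cospectrality with $\theta\in\sigma_{uv}^-(M)$ and $\sigma_{uv}^+(M)=\sigma_u(M)\setminus\{\theta\}$. Taking $S=\sigma_{uv}^+(M)$ with $a=\sum_{\lambda_j\in S}(E_j)_{u,u}=\tfrac12$, the governing criterion is whether $|U_M(t)_{u,u}|$ is bounded away from $0$. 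I would apply Lemma \ref{eurekarem2}: its integer-relation condition (1) states precisely that any vanishing integer combination $\sum m_j\lambda_j$ with matching $\sum m_j=0$ must have an \emph{even} $S$-part, which is the negation of the displayed parity condition in (2). Hence the displayed condition (an odd-sum integer relation exists) is exactly the failure of Lemma \ref{eurekarem2}(1), meaning $|U_M(t_k)_{u,u}|$ cannot approach $2a-1=0$; combined with Theorem \ref{yipee} (which prevents $|U_M(t)_{u,u}|$ from vanishing except through $|U_M(t)_{u,v}|\to 1$, i.e.\ PGST), this yields sedentariness. Conversely, if no such odd relation exists, Lemma \ref{eurekarem2} produces $|U_M(t_k)_{u,u}|\to 0$, so $u$ is not sedentary. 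The arithmetic reformulation under $\phi(M,t)\in\mathbb{Z}[t]$ and periodicity is then just the translation of the parity condition via Lemma \ref{ts}, rewriting $\lambda_j-\theta=b_j\sqrt{\Delta}$ and reading the odd-sum condition as ``the $\nu_2(b_j)$ are not all equal.''

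\textbf{The main obstacle} I anticipate is the equivalence of the two formulations of condition (2): showing that ``an odd-sum integer relation among the $\lambda_j-\theta$ exists'' matches ``the $\nu_2(b_j)$ are not all equal'' under the integrality/periodicity hypotheses. This requires carefully invoking Lemma \ref{ts}'s $2$-adic valuation machinery and confirming that periodicity guarantees the eigenvalues have the structured form (integer or half-integer quadratic) needed for the valuation argument to apply, and that $\theta$ can be used as the common base point $\lambda_1$. The tight sedentariness conclusion should then follow from Lemma \ref{persed} once periodicity is in hand, since $U_M(t)_{u,u}\neq 0$ on $[0,\rho]$ is equivalent to the infimum being attained and positive.
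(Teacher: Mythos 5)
Your proposal is correct and follows essentially the same route as the paper: case (1) is handled by ruling out strong cospectrality and invoking Corollary \ref{yipee2} (the paper simply cites \cite[Corollary 3.14]{Monterde2022} where you verify the failure of $E_\theta\mathbf{e}_u=\pm E_\theta\mathbf{e}_v$ by hand), and case (2) rests on the dichotomy of Corollary \ref{yipee1} together with an integer-relation characterization of when pretty good state transfer fails. The only cosmetic difference is that in case (2) you extract the parity condition from Lemma \ref{eurekarem2} with $S=\sigma_{uv}^+(M)$ and $a=\tfrac12$, whereas the paper cites the equivalent twin PST/PGST characterizations of Kirkland et al.; both yield the same displayed condition, and the tightness claim follows from Lemma \ref{persed} in either version.
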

\begin{proof}
Combining condition (1) with \cite[Corollary 3.14]{Monterde2022} implies that each vertex in $T$ cannot be strongly cospectral with another vertex. Thus, each vertex in $T$ is sedentary by Corollary \ref{yipee2}. This proves (1). To prove (2), note that the condition that $\theta$ is a simple eigenvalue of $M$ implies that $T=\{u,v\}$ and $u$ and $v$ are strongly cospectral \cite[Corollary 3.14]{Monterde2022}. Combining this with Corollary \ref{yipee1} and the characterizations of PST and PGST between twin vertices (see Theorems 11 and 14 in \cite{Kirkland2023}) yields the desired results in (2). The last statement in (2) follows from Lemma \ref{persed}.
\end{proof}

We now prove another main result in this paper, which is an improvement of \cite[Theorem 16]{Monterde2023}.

\begin{theorem}
\label{sed2}
Let $T$ be a twin set in $X$ and fix $u\in T$. Let $\mathcal{B}_1$ be the resulting set after orthonormalizing $\{\textbf{e}_u-\textbf{e}_v: v\in T\backslash\{u\}\}$, and let $ \mathcal{B}= \mathcal{B}_1\cup \mathcal{B}_2$ be an orthonormal basis for the eigenspace of $M$ associated with the eigenvalue $\theta$ in Lemma \ref{alphabeta}. Define $F=\sum_{ \textbf{w}\in \mathcal{B}_2} \textbf{w}\textbf{w}^T$, where $F$ is absent whenever $\mathcal{B}_2=\varnothing$. Then
\begin{equation}
\label{sedeq}
|U_M(t)_{u,u}|\geq 2(E_{\theta})_{u,u}-1= 1-\frac{2}{|T|}+2F_{u,u}\quad \text{for all $t$},
\end{equation}
with equality if and only if (\ref{eureka333}) holds. The following also hold.
\begin{enumerate}
\item Suppose $1-\frac{2}{|T|}+2F_{u,u}>0$.
\begin{enumerate}
    \item If $\phi(M,t)\in\mathbb{Z}[t]$ and each vertex in $T$ is periodic, then each vertex in $T$ is tightly $C$-sedentary, where $C=1-\frac{2}{|T|}+2F_{u,u}$ whenever Lemma \ref{ts} holds and $C>1-\frac{2}{|T|}+2F_{u,u}$ otherwise. 
\item Suppose each vertex in $T$ is not periodic. Then each vertex in $T$ is sharply $(1-\frac{2}{|T|}+2F_{u,u})$-sedentary if and only if Lemma \ref{eurekarem2} holds.
\end{enumerate}
\item If $|T|\geq 3$, then each vertex in $T$ is $\left(1-\frac{2}{|T|}+2F_{u,u}\right)$-sedentary.
\item If $T=\{u,v\}$ and there is an eigenvector $\textbf{w}\notin\operatorname{span}\{\textbf{e}_u-\textbf{e}_v\}$ of $M$ associated with $\theta$ such that $\textbf{w}^T\textbf{e}_u\neq 0$ or $\textbf{w}^T\textbf{e}_v\neq 0$, then each vertex $T$ is $2F_{u,u}$-sedentary.
\end{enumerate}
\end{theorem}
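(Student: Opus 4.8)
The plan is to read statement (3) as the $|T|=2$ specialization of the master bound (\ref{sedeq}): once that bound is in place, the only thing left is to check that $2F_{u,u}$ is a legitimate sedentariness constant, i.e. that $0<2F_{u,u}\le 1$. Indeed, $|T|=2$ forces $1-\tfrac{2}{|T|}=0$, so (\ref{sedeq}) already reads $|U_M(t)_{u,u}|\ge 2F_{u,u}$ for all $t$; granting the two inequalities $0<2F_{u,u}\le 1$, Definition \ref{def} yields that $u$ is $2F_{u,u}$-sedentary, and the cospectrality of twins noted in Section \ref{secGT} (which gives $U_M(t)_{u,u}=U_M(t)_{v,v}$) propagates this to every vertex of $T$.

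For the admissibility of the constant I would first unwind the identity underlying (\ref{sedeq}) in this case. With $\mathcal{B}_1=\{\tfrac{1}{\sqrt2}(\textbf{e}_u-\textbf{e}_v)\}$, the projection $P_1=\sum_{\textbf{w}\in\mathcal{B}_1}\textbf{w}\textbf{w}^T=\tfrac12(\textbf{e}_u-\textbf{e}_v)(\textbf{e}_u-\textbf{e}_v)^T$ onto $\operatorname{span}\mathcal{B}_1$ has $(P_1)_{u,u}=\tfrac12$, and since $E_\theta=P_1+F$ we get $(E_\theta)_{u,u}=\tfrac12+F_{u,u}$, hence $2(E_\theta)_{u,u}-1=2F_{u,u}$, consistent with (\ref{sedeq}). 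The upper bound is then immediate: $(E_\theta)_{u,u}\le 1$ gives $F_{u,u}=(E_\theta)_{u,u}-\tfrac12\le\tfrac12$, so $2F_{u,u}\le 1$.

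The substantive step, which I expect to be the main obstacle, is the strict positivity $F_{u,u}>0$. Because $F$ is the orthogonal projection onto $\operatorname{span}\mathcal{B}_2$, we have $F_{u,u}=\|F\textbf{e}_u\|^2$, so the claim is exactly that $\textbf{e}_u$ is not orthogonal to $\operatorname{span}\mathcal{B}_2$. To exploit the twin structure I would bring in the transposition $R$ interchanging $u$ and $v$ and fixing all other vertices: since $u,v$ are twins, $R$ commutes with $M$ and preserves the $\theta$-eigenspace $V_\theta$, giving an orthogonal splitting $V_\theta=V_\theta^+\oplus V_\theta^-$ into the $(\pm1)$-eigenspaces of $R$. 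A one-line check shows that any vector fixed up to sign by a single transposition is supported on $\{u,v\}$, so $V_\theta^-=\operatorname{span}\{\textbf{e}_u-\textbf{e}_v\}$ and therefore $V_\theta^+=\operatorname{span}\mathcal{B}_2$; in particular every element of $\mathcal{B}_2$ is $R$-symmetric, with equal $u$- and $v$-entries. The hypothesis produces a $\theta$-eigenvector outside $\operatorname{span}\{\textbf{e}_u-\textbf{e}_v\}$ meeting $\textbf{e}_u$ or $\textbf{e}_v$, which by the strong cospectrality characterization \cite[Corollary 3.14]{Monterde2022} (used the same way in Theorem \ref{twinschar}(1)) means $u$ and $v$ are not strongly cospectral. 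Feeding this into $E_\theta\textbf{e}_u=\tfrac12(\textbf{e}_u-\textbf{e}_v)+F\textbf{e}_u$ and $E_\theta\textbf{e}_v=-\tfrac12(\textbf{e}_u-\textbf{e}_v)+F\textbf{e}_u$, together with the fact that $E_\lambda\textbf{e}_u=E_\lambda\textbf{e}_v$ for every $\lambda\ne\theta$ (forced by $R$-symmetry of the remaining eigenspaces), non-strong-cospectrality can hold only if $F\textbf{e}_u\ne\textbf{0}$, i.e. $F_{u,u}>0$.

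The delicate bookkeeping is concentrated in this last step: the hypothesis is phrased through a generic eigenvector not in $\operatorname{span}\{\textbf{e}_u-\textbf{e}_v\}$ rather than through an $R$-symmetric one, so one cannot read off $F_{u,u}>0$ directly and must route through the $R$-symmetric decomposition of $V_\theta$ (equivalently through \cite[Corollary 3.14]{Monterde2022}) to extract a symmetric $\theta$-eigenvector with nonzero $u$-entry. With $0<2F_{u,u}\le 1$ secured, statement (3) follows by applying (\ref{sedeq}) and Definition \ref{def} and then invoking cospectrality of the twins, exactly as in the first paragraph.
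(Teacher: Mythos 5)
Your proposal only engages with part (3) of the theorem; everything else is left unproven. The central claim, namely inequality (\ref{sedeq}) together with its equality condition, is treated as already ``in place'', but it is part of the statement to be established. In the paper it is the main step: one computes the explicit form of $E_{\theta}$ (the rank-$(|T|-1)$ projection coming from $\mathcal{B}_1$ plus $F$), so that $(E_\theta)_{u,u}=1-\frac{1}{|T|}+F_{u,u}\geq\frac12$, and then applies Theorem \ref{eureka} with $S=\{\theta\}$ and $a=(E_\theta)_{u,u}$, which gives $|U_M(t)_{u,u}|\geq 2a-1=1-\frac{2}{|T|}+2F_{u,u}$ with equality exactly when (\ref{eureka333}) holds. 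Parts (1a) and (1b) then follow from Lemmas \ref{ts} and \ref{eurekarem2}, and part (2) follows because $1-\frac{2}{|T|}>0$ once $|T|\geq 3$; none of this appears in your write-up. So, as a proof of the theorem as stated, there is a genuine gap: the inequality on which your argument rests is assumed rather than derived, and three of the four numbered claims are missing.

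For part (3) itself your argument is correct and in fact more detailed than the paper's, which simply asserts that the hypothesis forces $F_{u,u}>0$ and then reads off $|U_M(t)_{u,u}|\geq 2F_{u,u}>0$ from (\ref{sedeq}). Your identity $(E_\theta)_{u,u}=\frac12+F_{u,u}$, the upper bound $2F_{u,u}\leq 1$, and the propagation to all of $T$ via cospectrality of twins are all fine. The detour through the transposition $R$, the splitting $V_\theta=V_\theta^-\oplus V_\theta^+$ with $V_\theta^-=\operatorname{span}\{\textbf{e}_u-\textbf{e}_v\}$, and the equivalence ``$u,v$ not strongly cospectral if and only if $F\textbf{e}_u\neq\textbf{0}$'' is sound, and you are right that one cannot read $F_{u,u}>0$ directly off the hypothesized eigenvector $\textbf{w}$ (its $u$-entry could come entirely from its $\textbf{e}_u-\textbf{e}_v$ component); routing through \cite[Corollary 3.14]{Monterde2022}, exactly as Theorem \ref{twinschar}(1) does, is the right fix and supplies a justification the paper leaves implicit. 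To make the submission complete you still need to prove (\ref{sedeq}) via Theorem \ref{eureka} and supply parts (1) and (2).
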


\begin{proof}
Let $0_m$ denote the $m\times m$ zero matrix. From our assumption, we deduce that
\begin{equation*}
E_{\theta}=\left(I_{|T|}-\frac{1}{|T|}\textbf{J}_{|T|}\oplus 0_{n-|T|}\right)+F,
\end{equation*}
Taking $S=\{\theta\}$ and $a=(E_\theta)_{u,u}=1-\frac{1}{|T|}+F_{u,u}\geq \frac{1}{2}$ in Theorem \ref{eureka} yields
\begin{equation}
\label{abeq}
\begin{split}
|U_M(t)_{u,u}|&\geq a-(1-a)=2a-1=1-\frac{2}{|T|}+2F_{u,u}.
\end{split}
\end{equation}
Note that (1) is immediate from Lemma \ref{ts} and Lemma \ref{eurekarem2}, while (2) follows directly from (\ref{abeq}). Moreover, the assumption in (4) implies that $F_{u,u}>0$, and so (\ref{abeq}) yields $|U_M(t)_{u,u}|\geq 2F_{u,u}>0$. 
\end{proof}

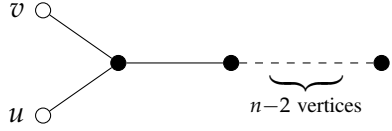
\begin{figure}[h!]\label{fig:small} 
	\begin{center}
		\begin{tikzpicture}
		\tikzset{enclosed/.style={draw, circle, inner sep=0pt, minimum size=.2cm}}	   
	   \node[enclosed, label={left, yshift=0cm: $u$}] (w_1) at (0.5,1.8) {};
	    \node[enclosed, label={left, yshift=0cm: $v$}] (w_2) at (0.5,3.2) {};
		\node[enclosed, fill=black] (w_3) at (1.5,2.5) {};
		\node[enclosed, fill=black] (w_4) at (3,2.5) {};
		\node[enclosed, fill=black] (w_6) at (5,2.5) {};
		\draw (w_1) -- (w_3);
		\draw (w_2) -- (w_3);
		\draw (w_4) -- (w_3);
		\draw (w_4) -- node[below] {$\underbrace{}_{n-2\ \text{vertices}}$} (w_6)[dashed];
		\end{tikzpicture}
	\end{center}
	\caption{The graph $P_n'$ with twin vertices $u$ and $v$ marked white}\label{yay}
\end{figure}

\begin{remark}
\label{remsed}
Let $T$ be a twin set in $X$. If Theorem \ref{twinschar}(1) holds, then Theorem \ref{sed2}(2-3) provides a bound for the sedentariness of vertices in $T$. Now, if Theorem \ref{twinschar}(2) holds, then $\mathcal{B}=\mathcal{B}_1=\operatorname{span}\{\textbf{e}_u-\textbf{e}_v\}$ in Theorem \ref{sed2}, and so (\ref{sedeq}) yields the trivial inequality $|U_M(t)_{u,u}|\geq 0$. In this case, $T=\{u,v\}$ and $u$ and $v$ are strongly cospectral in $X$, and so one may invoke Theorem \ref{twinschar}(2) to determine whether $u$ is sedentary in $X$ or not. By explicitly calculating lower bounds for $|U_M(t)|_{u,u}$, Monterde provided sharp bounds for the sedentariness of the apexes of $O_2\vee Y$ for $M\in\{A,L\}$ with the additional condition that $Y$ is regular when $M=A$ (see Theorems 32 and 36 in \cite{Monterde2023}). But in general, if $u\in T$ is sedentary in $X$, then we currently do not have a straightforward way of determining a bound for the sedentariness of vertices in $T$.
\end{remark}

Monterde first established Theorem \ref{sed2}(2) in \cite[Theorem 16]{Monterde2023}, and she used this result provide infinite families of join graphs, blow-up graphs and graphs with tails that contain sedentary vertices. In the next examples, we provide infinite families of graphs that satisfy Theorem \ref{sed2}(3).

\begin{example}
\label{exsed}
Let $M=L$ and $Y$ be a simple weighted graph on $n$ vertices. Consider $K_2\vee Y$ and let $u=1$ and $v=2$ be the apexes of $K_2\vee Y$. Then $\sigma_u(L(K_2\vee Y))=\{0,\theta\}$, where $\theta=n+2$. If $\mathcal{B}$ is an orthonormal basis for the eigenspace associated with the eigenvalue $n$ of $L(Y)$, then
\begin{center}
    $\left\{\frac{1}{\sqrt{2}}(\textbf{e}_1-\textbf{e}_2), \frac{1}{\sqrt{2n(n+2)}}\left[ \begin{array}{ccccc} n\textbf{1}_2 \\ -2\textbf{1}_n\end{array} \right]\right\}\cup \left\{\left[ \begin{array}{ccccc} \textbf{0} \\ \textbf{v}\end{array} \right]:\textbf{v}\in\mathcal{B}\right\}$
\end{center}
is as an orthonormal basis for the eigenspace associated with the eigenvalue $\theta$ of $L(K_2\vee Y)$ and the set on the right is absent if $Y$ is not a join graph. Hence, if we let $\textbf{w}=\frac{1}{\sqrt{2n(n+2)}}\left[ \begin{array}{ccccc} n\textbf{1}_2 \\ -2\textbf{1}_n\end{array} \right]$, then the conditions in Theorem \ref{sed2}(3) are met, and we conclude that
\begin{center}
    $|U_{L(K_2\vee Y)}(t)_{u,u}|\geq 2F_{1,1}=\frac{n}{n+2}\quad \text{for all $t$}$
\end{center}
with equality if and only if $t=\frac{j\pi}{n+2}$ for all odd $j$. Thus, vertices $u$ and $v$ in $K_2\vee Y$ are tightly $(\frac{n}{n+2})$-sedentary. This result is consistent with \cite[Theorem 29]{Monterde2023}.
\end{example}

\begin{example}
\label{exsed}
Let $M=A$, $n\geq 3$ be odd and $u=1$ be an end vertex of $P_n$. Let $P_n'$ be the resulting graph after adding vertex $v:=n+1$ to $P_n$ such that $u$ and $v$ non-adjacent twins (see Figure \ref{fig:small}). If $\textbf{x}=\frac{1}{\sqrt{2}}(\textbf{e}_1-\textbf{e}_{n+1})$ and $\textbf{w}=\frac{\sqrt{2}}{\sqrt{n+1}}[1,0,-1,0,1,\ldots,(-1)^{\frac{n-1}{2}},0]^T$, then $\textbf{x}$ and $\textbf{w}$ are eigenvectors asscociated with the eigenvalue $\theta=0$ of $A$. Orthonormalizing $\{\textbf{x},\textbf{w}\}$ yields an orthonormal basis $\{\textbf{x},\textbf{w}'\}$ for the eigenspace associated to $\theta$, where $\textbf{w}'=\frac{1}{\sqrt{2n}}[1,0,2,0,-2,\ldots,0,2^{\frac{n+1}{2}},1]^T$. Hence, Theorem \ref{sed2}(3) yields
\begin{center}
    $|U_{A(P_n')}(t)_{u,u}|\geq 2F_{1,1}=\frac{1}{n}\quad \text{for all $t$}.$
\end{center}
If the nonzero eigenvalues in $\sigma_u(A(P_5'))$ are linearly independent over $\mathbb{Q}$, then a direct application of Lemma \ref{eurekarem2} implies that $u$ is sharply $(\frac{1}{n})$-sedentary in $P_n'$. In particular, if $n=3$, then $u$ is tightly $(\frac{1}{3})$-sedentary in $P_3'=K_{1,3}$, while if $n=5$, then $\sigma_u(A(P_5'))=\{0,\pm\sqrt{(5\pm\sqrt{5})/2}\}$, and so $u$ is sharply $(\frac{1}{5})$-sedentary in $P_5'$. We conjecture that $u$ is sharply $(\frac{1}{n})$-sedentary in $P_n'$ for all odd $n\geq 5$.
\end{example}

\section{Complete multipartite graphs}\label{secCMG}

A complete bipartite graph $K_{n_1,\ldots,n_k}$ is the graph $K_{n_1,\ldots,n_k}=\bigvee_{j=1}^k O_{n_j}$. If $n_{\ell}=1$ (resp., $n_{\ell}=2$) for some $\ell\in\{1,\ldots,k\}$, then we may write $K_{n_1,\ldots,n_k}=O_1\vee \bigvee_{j\neq \ell} O_{n_j}$ (resp., $K_{n_1,\ldots,n_k}=O_2\vee \bigvee_{j\neq \ell} O_{n_j}$), and so we may view $K_{n_1,\ldots,n_k}$ as a cone (resp., double cone) on $Y=\bigvee_{j\neq \ell} O_{n_j}$. Our goal in this section to characterize PGST and sedentariness in complete multipartite graphs with respect to $M\in\{A,L\}$.

\subsection*{Laplacian case}

We first analyze the quantum walk on a complete multipartite graphs using its Laplacian matrix.

\begin{theorem}
\label{cmg}
Let $X=K_{n_1,\ldots,n_k}$ and $n=\sum_{j=1}^kn_j$. If $\ell\in\{1,\ldots,k\}$ and $u$ is a vertex of $X$ that belong to the partite set of size $n_{\ell}$, then the following hold with respect to $L=L(X)$.
\begin{enumerate}
\item If $n_{\ell}=1$, then $u$ is tightly $(1-\frac{2}{n})$-sedentary at time $t=\frac{j\pi}{n}$ for any odd $j$.
\item If $n_{\ell}=2$ and $n\equiv 0$ (mod 4), then the two vertices of $X$ that belong to the partite set of size $n_{\ell}$ admit perfect state transfer with minimum PST time $\frac{\pi}{2}$.
\item If $n_{\ell}=2$ and $n\equiv 2$ (mod 4), then $u$ is tightly $(\frac{2}{n})$-sedentary at time $t=\frac{j\pi}{2}$ for any odd $j$.
\item Let $n_{\ell}=2$ and $n$ be odd. If $n=3$, then  $u$ is tightly $(\frac{1}{3})$-sedentary at time $t=j\pi$ for any odd $j$. If $n\geq 5$, then  $u$ is tightly $(\frac{\sqrt{2}}{n})$-sedentary at time $t=\frac{j\pi}{2}$ for any odd $j$.
\item If $n_{\ell}\geq 3$, then $u$ is tightly $C$-sedentary, where $C=1-\frac{2}{n_{\ell}}$ at time $t=\frac{j\pi}{g}$ whenever $\nu_2(n)>\nu_2(n_{\ell})$, where $g=\operatorname{gcd}(n,n_{\ell})$ and $j$ is any odd integer and $C>1-\frac{2}{n_{\ell}}$ otherwise.
\end{enumerate}
\end{theorem}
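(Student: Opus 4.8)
The plan is to reduce every case to the spectral data of $L=L(X)$ at the chosen vertex $u$ and then feed this into the periodicity machinery of Section \ref{secSed}. First I would record the Laplacian spectrum of $K_{n_1,\dots,n_k}$: the eigenvalues are $0$ (simple), $n$ with multiplicity $k-1$, and $n-n_j$ with multiplicity $n_j-1$ for each $j$. Since the partite set containing $u$ is a twin set and the eigenvectors for $n-n_j$ with $j\neq\ell$ vanish on $u$, a short projection computation gives, for $u$ in the part of size $n_\ell\geq 2$,
\begin{equation*}
\sigma_u(L)=\{0,\,n-n_\ell,\,n\},\qquad (E_0)_{u,u}=\tfrac1n,\quad (E_{n-n_\ell})_{u,u}=\tfrac{n_\ell-1}{n_\ell},\quad (E_n)_{u,u}=\tfrac{n-n_\ell}{n n_\ell},
\end{equation*}
while for $n_\ell=1$ the support collapses to $\{0,n\}$ with diagonal projections $\tfrac1n$ and $\tfrac{n-1}{n}$. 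Because all eigenvalues are integers, $u$ is periodic, so Lemma \ref{persed} applies throughout: $u$ is sedentary exactly when $U_L(t)_{u,u}\neq 0$ on a period, and is then tightly $C$-sedentary with $C=\min_{t\in[0,\rho]}|U_L(t)_{u,u}|$. Here I use $|U_L(t)_{u,u}|=|U_{-L}(t)_{u,u}|=|U_M(t)_{u,u}|$, so the choice of sign for the Laplacian is immaterial.

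For $n_\ell=1$ (part (1)) the two-term sum $U_L(t)_{u,u}=\tfrac1n+\tfrac{n-1}{n}e^{itn}$ is minimized exactly when $e^{itn}=-1$, giving $1-\tfrac2n$ at $t=\tfrac{j\pi}{n}$ with $j$ odd. For $n_\ell\geq 3$ (part (5)) the part of $u$ is a twin set of size $n_\ell\geq 3$ with $F$ vanishing at $u$, so Theorem \ref{sed2}(2) yields the uniform bound $|U_L(t)_{u,u}|\geq 1-\tfrac{2}{n_\ell}>0$; the tightness and the time then come from Lemma \ref{ts} with $S=\{n-n_\ell\}$ and $\Delta=1$. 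The valuation condition $\nu_2(n-n_\ell)=\nu_2(n_\ell)$ required in Lemma \ref{ts}(2) is equivalent to $\nu_2(n)>\nu_2(n_\ell)$, and the gcd in the lemma is $\operatorname{gcd}(n,n_\ell)$; Lemma \ref{persed} then upgrades this to tight $C$-sedentariness with $C=1-\tfrac{2}{n_\ell}$ attained at $t=\tfrac{j\pi}{g}$, $g=\operatorname{gcd}(n,n_\ell)$, when the condition holds, and $C>1-\tfrac{2}{n_\ell}$ otherwise.

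The substantive work is parts (2)--(4), all with $n_\ell=2$. Here $T=\{u,v\}$ and $u,v$ are strongly cospectral with $\sigma_{uv}^+(L)=\{0,n\}$ and $\sigma_{uv}^-(L)=\{n-2\}$. Writing $U_L(t)_{u,u}=P(t)+Q(t)$ with $P=\tfrac1n+\tfrac{n-2}{2n}e^{itn}$ and $Q=\tfrac12 e^{it(n-2)}$, one checks that $U_L(t)_{u,u}=0$ happens iff $e^{itn}=1$ and $e^{it(n-2)}=-1$, which is exactly the condition for perfect state transfer from $u$ to $v$. A $2$-adic analysis of this system shows it is solvable iff $n\equiv 0\pmod 4$: this gives part (2), and evaluating at $t=\tfrac{\pi}{2}$ shows $U_L(\tfrac{\pi}{2})_{u,v}=1$, with minimality of $\tfrac{\pi}{2}$ following from the same valuation count. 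For $n\equiv 2\pmod 4$ and for odd $n$ no such $t$ exists, so by Lemma \ref{persed} every vertex of the part is sedentary.

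It remains to pin down the sharp constants in parts (3) and (4), which is the crux. Expanding gives
\begin{equation*}
|U_L(t)_{u,u}|^2=\tfrac{1}{4n^2}\Big[4+n^2+(n-2)^2+4n\cos\!\big((n-2)t\big)+4(n-2)\cos(nt)+2n(n-2)\cos 2t\Big],
\end{equation*}
so the stationarity condition factors as $\cos t\,\big(\sin((n-1)t)+\sin t\big)=0$. The family $\cos t=0$, i.e. $t=\tfrac{j\pi}{2}$ with $j$ odd, yields $|U_L(t)_{u,u}|=\tfrac2n$ when $n\equiv 2\pmod 4$ and $\tfrac{\sqrt2}{n}$ when $n$ is odd; the remaining critical times $t=\tfrac{2k\pi}{n}$ and $t=\tfrac{(2k+1)\pi}{n-2}$ are then evaluated and compared. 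For $n\equiv 2\pmod 4$ the value $\tfrac2n$ is the global minimum (part (3)); for odd $n\geq 5$ the value $\tfrac{\sqrt2}{n}$ is the global minimum (part (4)); whereas for $n=3$ the competing family wins and the minimum is $\tfrac13$, attained at $t=j\pi$, which explains the separate statement there. I expect this comparison of critical values---confirming that the stated time is the global minimizer in each congruence class of $n$, with the split $n\equiv 2\pmod 4$ versus $n$ odd and $n=3$ versus $n\geq 5$ dictated precisely by which family of critical points attains the minimum---to be the main obstacle. Periodicity via Lemma \ref{persed} then certifies tight $C$-sedentariness with these constants.
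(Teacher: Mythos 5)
Your route is genuinely different from the paper's: the paper disposes of Theorem \ref{cmg} in one line by citing Theorem 29 of \cite{Monterde2023} for (1), Corollary 4 of \cite{Alvir2016} for (2), Theorem 32 of \cite{Monterde2023} for (3)--(4), and Theorem \ref{sed2}(2) together with Theorem 35 of \cite{Monterde2023} for (5), whereas you rebuild everything from the Laplacian spectral data. Your spectral bookkeeping is correct ($\sigma_u(L)=\{0,n-n_\ell,n\}$ with the stated projection diagonals, the reduction $|U_{-L}(t)_{u,u}|=|U_L(t)_{u,u}|$, the twin-set/Lemma \ref{ts} analysis for (5) including the equivalence $\nu_2(n-n_\ell)=\nu_2(n_\ell)\iff\nu_2(n)>\nu_2(n_\ell)$ and $g=\gcd(n,n_\ell)$, and the characterization $U_L(t)_{u,u}=0\iff e^{itn}=1,\ e^{it(n-2)}=-1$ for (2), whose parity analysis correctly yields $n\equiv 0\pmod 4$ and minimum time $\pi/2$). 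What your approach buys is a self-contained proof that actually exhibits where the minima sit; what it costs is that you must do the optimization that the cited Theorem 32 already packages.

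That cost is where the one real gap sits: in parts (3)--(4) you correctly reduce stationarity to $\cos t\,(\sin((n-1)t)+\sin t)=0$ and evaluate the family $\cos t=0$, but the claim that the other two families of critical points give larger values is asserted, not proved, and it is exactly the crux (as your $n=3$ example shows, the competing family \emph{can} win). To close it: at $t=2k\pi/n$ one gets $U_L(t)_{u,u}=\tfrac12(1+e^{it(n-2)})$, so $|U_L(t)_{u,u}|=|\cos(2k\pi/n)|$; at $t=(2k+1)\pi/(n-2)$ one gets $U_L(t)_{u,u}=\tfrac{n-2}{2n}(e^{itn}-1)$, so $|U_L(t)_{u,u}|=\tfrac{n-2}{n}\bigl|\cos\bigl(\tfrac{(2k+1)\pi}{n-2}\bigr)\bigr|$. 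For $n\equiv 2\pmod 4$ a mod-$4$ argument shows the relevant angles stay at distance at least $\pi/n$ (resp.\ $\pi/(n-2)$) from odd multiples of $\pi/2$, and $\sin x\geq 2x/\pi$ then gives strict lower bounds exceeding $2/n$; an analogous estimate handles odd $n\geq 5$ against $\sqrt2/n$, with $n=3$ falling to the second family at $t=\pi$. With that comparison written out, your argument is complete and independent of the external references.
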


\begin{proof}
Since $L(X)$ has integer eigenvalues, (1), (2), (3-4) and (5) resp.\ follow from \cite[Theorem 29]{Monterde2023}, \cite[Corollary 4]{Alvir2016}, \cite[Theorem 32]{Monterde2023} and Theorem \ref{sed2}(2)-\cite[Theorem 35]{Monterde2023} combined.
\end{proof}

Since $CP(2k)=\bigvee_k O_2$, Theorem \ref{cmg}(2-3) yields our next result.

\begin{corollary}
\label{cmg11}
Let $X=CP(2k)$. If $k$ is even, then perfect state transfer occurs between pairs of non-adjacent vertices in $X$ with minimum PST time $\frac{\pi}{2}$. Otherwise, every vertex in $X$ is tightly $(\frac{1}{k})$-sedentary.
\end{corollary}

Next, since $K_n\backslash e=O_2\vee K_{n-2}$, Theorem \ref{cmg} gives us our next result.

\begin{corollary}
\label{cmg11kne}
For all $n\geq 3$, let $X=K_n\backslash e$ with non-adjacent vertices $u$ and $v$.
\begin{enumerate}
\item For all $n\geq 3$, each degree $n-1$ vertex of $X$ is $(1-\frac{2}{n})$-sedentary at time $t=\frac{j\pi}{n}$ for any odd $j$.
\item If $n\equiv 0$ (mod 4), then perfect state transfer occurs between $u$ and $v$ with minimum time $\frac{\pi}{2}$. Otherwise, $u$ is tightly $C$-sedentary, where $C=\frac{2}{n}$ whenever $n\equiv 2$ (mod 4) at time $t=\frac{j\pi}{2}$, $C=\frac{1}3$ whenever $n=3$ at time $t=j\pi$ and $C=\frac{\sqrt{2}}{n}$ whenever $n\geq 5$ is odd at time $t=\frac{j\pi}{2}$, where $j$ is odd.
\end{enumerate}
\end{corollary}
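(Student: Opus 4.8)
The plan is to recognize $K_n\backslash e$ as a complete multipartite graph and then read off each case directly from Theorem \ref{cmg}, working throughout with $M=L$. The crucial structural observation is that $K_n\backslash e=O_2\vee K_{n-2}=K_{2,\underbrace{1,\ldots,1}_{n-2}}$, a complete multipartite graph with exactly one partite set of size $2$ and $n-2$ partite sets of size $1$; its vertex count is $2+(n-2)=n$, which matches the parameter $n$ appearing in Theorem \ref{cmg}.

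First I would classify the vertices by the size of the partite set containing them. The two non-adjacent vertices $u$ and $v$ constitute precisely the partite set of size $2$ (so $n_\ell=2$), and each has degree $n-2$. Every other vertex forms a singleton partite set (so $n_\ell=1$) and is adjacent to all $n-1$ remaining vertices, hence has degree $n-1$. This identifies the ``degree $n-1$ vertices'' of the corollary with the $n_\ell=1$ case of Theorem \ref{cmg}, and identifies $u,v$ with the $n_\ell=2$ case, so that no vertex is left unaccounted for.

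For part (1), I would apply Theorem \ref{cmg}(1) with $n_\ell=1$ to each degree $n-1$ vertex; this immediately yields tight $(1-\frac{2}{n})$-sedentariness at $t=\frac{j\pi}{n}$ for odd $j$. For part (2), I would split according to $n\bmod 4$ and the parity of $n$, invoking the matching item of Theorem \ref{cmg} with $n_\ell=2$: item (2) gives perfect state transfer between $u$ and $v$ at minimum time $\frac{\pi}{2}$ when $n\equiv 0\pmod 4$; item (3) gives tight $(\frac{2}{n})$-sedentariness at $t=\frac{j\pi}{2}$ when $n\equiv 2\pmod 4$; and item (4) supplies the two odd subcases, namely tight $(\frac{1}{3})$-sedentariness at $t=j\pi$ when $n=3$ and tight $(\frac{\sqrt{2}}{n})$-sedentariness at $t=\frac{j\pi}{2}$ when $n\geq 5$ is odd.

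Because the corollary is a specialization of an already-established theorem, there is no genuine analytic obstacle; the only point requiring care is the bookkeeping. The main thing to verify is that the modular and parity hypotheses of each item of Theorem \ref{cmg} are correctly matched to the stated constants $C$ and optimal times $t$, and that the degree computation cleanly separates the size-$2$ part from the singletons so that the two parts of the corollary partition all vertices without overlap.
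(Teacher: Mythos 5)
Your proposal is correct and matches the paper's own argument, which likewise identifies $K_n\backslash e=O_2\vee K_{n-2}$ as the complete multipartite graph with one part of size $2$ and $n-2$ singleton parts and then reads off each case from Theorem \ref{cmg}. The vertex classification and case-matching you describe is exactly the bookkeeping the paper leaves implicit.
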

In contrast to complete graphs where each vertex is sedentary, we now show that a huge fraction of the family of complete multipartite bipartite graphs are Laplacian sedentary at every vertex.

\begin{corollary}
\label{cmg1}
Let $X=K_{n_1,\ldots,n_k}$ and $n=\sum_{j=1}^kn_j$. If either (i) $n\not\equiv 0$ (mod 4) or (ii) $n\equiv 0$ (mod 4) and $n_j\neq 2$ for each $j\in\{1,\ldots,k\}$, then every vertex in $X$ is sedentary.
\end{corollary}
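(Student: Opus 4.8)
The plan is to derive the statement entirely from the case analysis already carried out in Theorem \ref{cmg}, which completely determines the behaviour of an arbitrary vertex $u$ of $X$ according to the size $n_\ell$ of its partite set and the residue of $n$ modulo $4$. The crucial observation is that among the five cases of Theorem \ref{cmg}, every case yields sedentariness of $u$ \emph{except} case (2), where $n_\ell=2$ and $n\equiv 0\pmod 4$; in that single case $u$ instead admits perfect state transfer, so that $\lvert U_M(\tau)_{u,u}\rvert=0$ at the PST time $\tau$ and $u$ fails to be sedentary by Definition \ref{def}. Thus it suffices to show that each of the hypotheses (i) and (ii) rules out the configuration ``$n_\ell=2$ and $n\equiv 0\pmod 4$'' for every vertex.

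First I would record that the cases of Theorem \ref{cmg} are exhaustive for a fixed vertex $u$: the value $n_\ell$ is $1$, $2$, or at least $3$, and when $n_\ell=2$ the integer $n$ satisfies exactly one of $n\equiv 0\pmod 4$, $n\equiv 2\pmod 4$, or $n$ odd. These possibilities are covered by parts (1); (2)--(4); and (5), respectively. Consequently, once the offending case (2) has been excluded, parts (1), (3), (4), (5) each guarantee that $u$ is (tightly) sedentary.

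Under hypothesis (i) we have $n\not\equiv 0\pmod 4$, so case (2) cannot occur for any vertex, whatever the value of $n_\ell$. A vertex with $n_\ell=1$ is sedentary by Theorem \ref{cmg}(1); a vertex with $n_\ell=2$ falls under part (3) if $n\equiv 2\pmod 4$ and under part (4) if $n$ is odd, both of which give sedentariness; and a vertex with $n_\ell\geq 3$ is sedentary by part (5). Under hypothesis (ii) we have $n\equiv 0\pmod 4$ but $n_j\neq 2$ for every $j$, so no vertex lies in a partite set of size $2$ and case (2) again cannot arise; here each vertex has $n_\ell=1$ or $n_\ell\geq 3$ and is sedentary by Theorem \ref{cmg}(1) or (5), respectively. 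In either case every vertex of $X$ is sedentary.

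Since the conclusion is a direct bookkeeping consequence of Theorem \ref{cmg}, I do not anticipate any genuine obstacle; the only point requiring care is to confirm that the non-sedentary behaviour is confined precisely to the simultaneous conditions $n_\ell=2$ and $n\equiv 0\pmod 4$, which is exactly the configuration that hypotheses (i) and (ii) are engineered to avoid.
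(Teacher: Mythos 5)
Your proposal is correct and is exactly the argument the paper intends: the corollary is stated as an immediate consequence of Theorem \ref{cmg}, whose only non-sedentary case is part (2) ($n_\ell=2$ with $n\equiv 0 \pmod 4$), and hypotheses (i) and (ii) are precisely designed to exclude that configuration. Your bookkeeping of the remaining cases (1), (3), (4), (5) is complete and matches the paper's reasoning.
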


If $n\equiv 0$ (mod 4), then $n$ only has one partition where all parts are equal to two. Thus:

\begin{corollary}
\label{23}
$CP(2k)$ for even $k$ are the only complete multipartite graphs with no sedentary vertex.
\end{corollary}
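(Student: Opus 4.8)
The plan is to prove the stated equivalence by a case analysis on the part sizes of $X = K_{n_1,\dots,n_k}$, treating the preceding results as black boxes. Two inclusions must be established: that $CP(2k)$ with $k$ even has no sedentary vertex, and conversely that every complete multipartite graph which is not of this form contains at least one sedentary vertex.

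For the forward direction I would first apply Corollary \ref{cmg1} to dispatch the bulk of the cases at once: whenever $n \not\equiv 0 \pmod 4$, or $n \equiv 0 \pmod 4$ with no part of size $2$, every vertex of $X$ is already sedentary. This confines attention to graphs with $n \equiv 0 \pmod 4$ possessing at least one part of size $2$. For such a graph I would examine a vertex $u$ according to the size $n_\ell$ of its part and invoke Theorem \ref{cmg}: a part of size $1$ contributes a tightly $(1-\tfrac{2}{n})$-sedentary vertex by (1), and a part of size $\geq 3$ contributes a tightly $C$-sedentary vertex with $C \geq 1-\tfrac{2}{n_\ell} > 0$ by (5). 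Hence, if $X$ has no sedentary vertex, it can contain no part of size $1$ and no part of size $\geq 3$, so every part has size exactly $2$. Writing $X = CP(2k)$, the constraint $n = 2k \equiv 0 \pmod 4$ is precisely the statement that $k$ is even. This is also where the remark preceding the statement enters, since for $n \equiv 0 \pmod 4$ the all-twos partition is the unique partition into parts of size $2$.

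For the reverse direction I would confirm that $CP(2k)$ with $k$ even genuinely fails to have a sedentary vertex. By Corollary \ref{cmg11}, when $k$ is even perfect state transfer occurs between every pair of non-adjacent vertices. Since each vertex of $CP(2k)$ has a unique non-neighbour, every vertex participates in PST; at the PST time the corresponding diagonal amplitude vanishes, so $\inf_{t>0}\lvert U_M(t)_{u,u}\rvert = 0$ with $M = L$, and $u$ is not sedentary. (This is also immediate from Proposition \ref{sedpgst}, as PST is a special case of PGST.) Combining the two inclusions yields the claimed characterization.

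I do not expect a genuine obstacle, since all the analytic content already resides in Theorem \ref{cmg} and Corollaries \ref{cmg11} and \ref{cmg1}; the argument merely assembles them into a single dichotomy. The only points requiring care are the exhaustiveness of the part-size trichotomy ($n_\ell = 1$, $n_\ell = 2$, $n_\ell \geq 3$) and the clean translation of ``all parts equal $2$ with $n \equiv 0 \pmod 4$'' into ``$CP(2k)$ with $k$ even'', which is exactly the uniqueness-of-the-all-twos-partition observation recorded just before the statement.
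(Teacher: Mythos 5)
Your proposal is correct and follows essentially the same route as the paper: the paper's own justification is the terse observation that for $n\equiv 0\pmod 4$ the all-twos partition is the unique partition into parts of size two, combined with Corollary \ref{cmg1}, Theorem \ref{cmg} and Corollary \ref{cmg11}, which is exactly the case analysis you spell out. Your write-up simply makes explicit the part-size trichotomy and the converse direction that the paper leaves implicit.
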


\subsection*{Adjacency case}

The next result is an analogue of Theorem \ref{cmg} for the adjacency case.

\begin{theorem}
\label{cmgA}
Let $X=K_{n_1,\ldots,n_k}$ and $n=\sum_{j=1}^kn_j$. If $\ell\in\{1,\ldots,k\}$ and $u$ is a vertex of $X$ that belong to the partite set of size $n_{\ell}$, then the following hold with respect to $A=A(X)$.
\begin{enumerate}
\item Let $n_{\ell}=1$ for each $\ell\in \mathcal{I}$, where $\varnothing\neq \mathcal{I}\subseteq\{1,\ldots,k\}$.
\begin{enumerate} 
\item If $|\mathcal{I}|=1$ and $n_r=m$ for all $r\notin \mathcal{I}$, then $u$ is tightly $(\frac{(n-m-1)^2}{(n-m-1)^2+4(n-1)})$-sedentary 
at time $t=\frac{j\pi}{\sqrt{(n-m-1)^2+4(n-1)}}$ for any odd integer $j$. 
\item Let $|\mathcal{I}|=2$ and $n_r=m$ for all $r\notin \mathcal{I}$. Let $\Delta=(n-m-3)^2+8(n-2)$.
\begin{enumerate}
    \item If $\Delta$ is not a perfect square, then pretty good state transfer occurs between the two vertices that belong to partite sets of size one.
    \item If $\Delta$ is a perfect square and $\nu_2(n-m+1)\neq \nu_2(\sqrt{\Delta})$, then perfect state transfer occurs between the two vertices that belong to partite sets of size one.
    \item If $\Delta$ is a perfect square and $\nu_2(n-m+1)=\nu_2(\sqrt{\Delta})$, 
    then $X$ is tightly sedentary at $u$. 
\end{enumerate}
    \item Let $3\leq |\mathcal{I}| \leq n$. Then $|U_{A}(t)_{u,u}|\geq 1- \frac{2}{|\mathcal{I}|}$ for all $t$. If we add that $n_r=m$ for all $r\notin\mathcal{I}$ and let $\Delta=(n-m-2|\mathcal{I}|+1)^2+4|\mathcal{I}|(n-|\mathcal{I}|)$, then the following hold.
    \begin{enumerate}
        \item Let $\Delta$ be a perfect square. 
        If $\nu_2(n-m+1)\neq \nu_2(\sqrt{\Delta})$, 
        then $u$ is tightly $(1-\frac{2}{{|\mathcal{I}|}})$-sedentary at time $t=\frac{j\pi}{g}$, where $g=\operatorname{gcd}(n-m+1+\sqrt{\Delta},n-m+1-\sqrt{\Delta})$ and $j$ is odd. 
        Otherwise, $u$ is tightly $C$-sedentary for some $C>1-\frac{2}{{|\mathcal{I}|}}$.
        \item If $\Delta$ is not a perfect square, then $u$ is sharply $(1-\frac{2}{{|\mathcal{I}|}})$-sedentary.
        \end{enumerate}
    \item Let $|\mathcal{I}|\leq n-4$ and $n_r=2$ for all $r\notin\mathcal{I}$ (so $n-|\mathcal{I}|$ is even). Let $\Delta=(n-1)^2+4|\mathcal{I}|$.
    \begin{enumerate}
    \item If $\Delta$ is not a perfect square, then pretty good state transfer occurs between every pair of vertices in a partite set of size two if and only if $n\equiv 3$ (mod 4).
    \item If $\Delta$ is a perfect square, 
    then then perfect state transfer occurs between every pair of vertices in a partite set of size two if and only if $\nu_2(n-3)\neq \nu_2(\sqrt{\Delta})$. 
    \item If either (i) $\Delta$ is not a perfect square and $n\not\equiv 3$ (mod 4), or (ii) $\Delta$ is a perfect square and $\nu_2(n-3)=\nu_2(\sqrt{\Delta})$, then each vertex in a partite set of size two is sedentary.   
\end{enumerate}
\end{enumerate}
\item Let $n_{\ell}=2$ and $n_r=m$ for all $r\neq \ell$. Let $\Delta=(n-m-2)^2+8(n-2)$ and $v$ be a twin of $u$.
\begin{enumerate}
    \item If either (i) $\Delta$ is not a perfect square, (ii) $\Delta$ is a perfect square and $\nu_2(n-m-2)\neq \nu_2(\sqrt{\Delta})$ or (iii) $n=m+2$, then $X$ is not sedentary at $u$. In particular, if (i) holds, then pretty good state transfer occurs between $u$ and $v$. Otherwise, perfect state transfer occurs between them.
    \item Let $n>m+2$ and $n-2=\frac{1}{2}s(n-m-2+s)$ for some integer $s$ such that $\nu_2(n-m-2)\leq \nu_2(s)$. 
    Let $d_1=(n-m-2)/g$ and $s_1=s/g$, where $g=\operatorname{gcd}(n-m-2,s)$. 
    \begin{enumerate}
        \item If $s_1=1$, then $|U_{A}(t)_{u,u}|\geq \frac{1}{d_1+2}$ for all $t$ with equality if and only if $t=\frac{j\pi}{g}$ for odd $j$.
        \item If $s_1\geq 2$, then $|U_{A}(t)_{u,u}|\geq \frac{\sqrt{2}}{d_1+2s_1}$ for all $t$ with equality if and only if $t=\frac{j\pi}{g}$ for odd $j$.
    \end{enumerate}
\end{enumerate}
\item If $n_{\ell}\geq 3$, then $|U_{A}(t)_{u,u}|\geq 1-\frac{2}{n_{\ell}}$ for all $t$. Moreover, $u$ is tightly $(1-\frac{2}{n_{\ell}})$-sedentary at time $t=\frac{j\pi}{g}$, whenever $n_r=m$ for all $r\neq \ell$, $\Delta=(n-n_{\ell}-m)^2+4n_{\ell}(n-n_{\ell})$ is a perfect square and $\nu_2(n-n_{\ell}-m)\neq \nu_2(\sqrt{\Delta})$, 
where $g=\operatorname{gcd}(n-n_{\ell}-m+\sqrt{\Delta},n-n_{\ell}-m-\sqrt{\Delta})$ and $j$ is odd. Furthermore, if $\sigma_u(A(X))$ is a linearly independent set over $\mathbb{Q}$, then $u$ is sharply $(1-\frac{2}{n_{\ell}})$-sedentary.
\end{enumerate}
\end{theorem}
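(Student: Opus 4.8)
The plan is to exploit the fact that in $K_{n_1,\ldots,n_k}$ every part of size at least two is a non-adjacent twin set with associated eigenvalue $\theta=0$, while the union of all singleton parts is an adjacent twin set with associated eigenvalue $\theta=-1$; both follow from Lemma \ref{alphabeta} with $q=0$, $\omega=0$ and $\eta\in\{0,1\}$. This routes most of the statement through the twin machinery of Section \ref{secGT}. Corollary \ref{yipee1} forces each vertex to be either sedentary or in PGST with a single partner, Theorem \ref{sed2}(2) immediately yields the bounds $|U_A(t)_{u,u}|\ge 1-\tfrac{2}{|\mathcal{I}|}$ and $|U_A(t)_{u,u}|\ge 1-\tfrac{2}{n_\ell}$ appearing in (1c) and (3) (using only $F_{u,u}\ge 0$), and Theorem \ref{twinschar} together with the PST/PGST characterizations of Kirkland et al.\ (Theorems 11 and 14 of \cite{Kirkland2023}) decides the dichotomy in the size-two cases (1b), (1d) and (2).

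The engine for every exact constant and every number-theoretic condition is an explicit computation of $\sigma_u(A)$. Writing $A=\textbf{J}_n-\bigoplus_j \textbf{J}_{n_j}$, the nonzero eigenvalues seen by $u$ are those of the equitable-partition quotient obtained by collapsing each family of equal-sized parts. When all parts outside a distinguished family share a common size $m$, this quotient is $2\times 2$ for one distinguished vertex/part and $3\times 3$ when two families are distinguished; in the $2\times 2$ case the nonzero eigenvalues are $\lambda_\pm=\tfrac12\big(\tau\pm\sqrt{\Delta}\big)$, where $\tau$ and $\Delta$ are exactly the trace and discriminant written in each clause (for instance $\Delta=(n-n_\ell-m)^2+4n_\ell(n-n_\ell)$ in (3) and $\Delta=(n-m-1)^2+4(n-1)$ in (1a)). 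I would then read off the projections $(E_\lambda)_{u,u}$ from the symmetric quotient eigenvectors, or, for the twin eigenvalue, from Theorem \ref{sed2} directly; the two-element-support vertex of (1a) is handled by the linear system $\sum_\lambda(E_\lambda)_{u,u}=1$, $\sum_\lambda \lambda(E_\lambda)_{u,u}=A_{u,u}=0$.

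With $\sigma_u(A)$ and the projections in hand, each conclusion follows from a single tool. The plain bounds come from Theorem \ref{eureka} with $S=\{\theta\}$ (or the larger-projection eigenvalue when $u$ lies in a singleton part, as in (1a)) and $a=(E_\theta)_{u,u}\ge\tfrac12$, giving $|U_A(t)_{u,u}|\ge 2a-1$. Whether this bound is attained — i.e.\ whether $u$ is tightly sedentary at the stated time $t=\tfrac{j\pi}{g}$ — is decided by Lemma \ref{ts}: when $\Delta$ is a perfect square all of $\sigma_u(A)$ is integral, and condition (2) of Lemma \ref{ts} reduces to the printed inequality $\nu_2(\tau)\ne\nu_2(\sqrt{\Delta})$, which forces $\nu_2(\lambda_+)=\nu_2(\lambda_-)$ and hence (\ref{eureka333}); equality of the $\nu_2$'s pushes $C$ strictly above the bound. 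When $\Delta$ is not a perfect square the eigenvalues are conjugate quadratic integers, so Lemma \ref{ts} fails but Lemma \ref{eurekarem2} — equivalently, linear independence of $\sigma_u(A)$ over $\mathbb{Q}$ — gives sharp $(1-\tfrac{2}{|T|})$-sedentariness, as in (1c), (3) and the irrational branches. For the size-two cases the very same trichotomy, fed into \cite{Kirkland2023}, separates PST ($\Delta$ a perfect square with the $\nu_2$ condition), PGST ($\Delta$ not a perfect square), and residual sedentariness (the $\nu_2$-equality case), matching (1b), (1d) and (2); the explicit constants in (2b) come from the same projection computation via Lemma \ref{persed}.

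The main obstacle is not any single estimate but the organization of the case analysis together with the two sub-cases where part-families interact. In (1b), (1d) and the second clause of (2) the naive quotient is $3\times 3$, so I must first peel off the stray ``cocktail-party'' root contributed by the size-two parts (respectively the singleton clique) to reduce the cubic characteristic polynomial to the quadratic carrying the advertised $\Delta$. Verifying that this stray root never enters $\sigma_{uv}^{+}(A)$, and that $\sigma_{uv}^{-}(A)=\{\theta\}$ remains a singleton so that strong cospectrality of the twin pair survives even though $\theta$ is not a simple eigenvalue, is the delicate point. I also expect the extraction of the closed-form constants (as in (1a) and (2b)) to require care, since they are the minima of the two- or three-term expression $\big|\sum_\lambda e^{it\lambda}(E_\lambda)_{u,u}\big|$ and must be matched against the infimum $|(E_{\lambda_+})_{u,u}-(E_{\lambda_-})_{u,u}|$ attained once the phases are driven out of alignment at $t=\tfrac{j\pi}{g}$.
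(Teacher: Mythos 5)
Your overall strategy coincides with the paper's: parts of size at least two are non-adjacent twin sets with $\theta=0$ and the union of singleton parts is an adjacent twin set with $\theta=-1$, so (1c) and (3) come from Theorem \ref{sed2} with Lemma \ref{ts} and Lemma \ref{eurekarem2} deciding tightness versus sharpness, while the size-two cases (1b), (1d), (2a) are settled by the dichotomy of Corollary \ref{yipee1} together with the PST/PGST characterizations of \cite{Kirkland2023}. The genuine difference is one of sourcing rather than of method: where you recompute everything from equitable-partition quotients and projections, the paper imports the closed forms --- (1a) is quoted from Godsil's cone bound in \cite{SedQW}, the eigenvalue supports come from \cite[Lemma 3(2)]{kirkland2023quantum} applied to the join decompositions $K_{|\mathcal{I}|}\vee K_{m,\ldots,m}$, $K_{|\mathcal{I}|}\vee CP(n-|\mathcal{I}|)$ and $O_2\vee Y$, and the constants of (2b) come from \cite[Theorem 36]{Monterde2023}. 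Your route is more self-contained but obliges you to redo the three-term minimization behind (2b), which you correctly flag as nontrivial.

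Two concrete cautions. First, your parenthetical identifying the hypothesis of Lemma \ref{eurekarem2} with linear independence of $\sigma_u(A)$ over $\mathbb{Q}$ is false in exactly the cases where you invoke it: in (1cii) the support is $\{-1,\tfrac{1}{2}(d\pm\sqrt{\Delta})\}$ with $d=n-m-1$, and $\lambda_++\lambda_-+d\cdot(-1)=0$ is always a nontrivial rational dependence, so independence never holds. What must be verified is the parity condition of Lemma \ref{eurekarem2}(1) for all integer relations with coefficient sum zero; the paper does this explicitly (any such relation forces $a=b$ and $c=ad$, whence $a+b+c=a(n-m+1)$ vanishes only for $a=0$). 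Second, your own two-eigenvalue computation in (1a) yields the infimum $\lvert (E_{\lambda_+})_{u,u}-(E_{\lambda_-})_{u,u}\rvert=\frac{n-m-1}{\sqrt{(n-m-1)^2+4(n-1)}}$, not the stated $\frac{(n-m-1)^2}{(n-m-1)^2+4(n-1)}$; for $K_{1,2,2}$ these are $1/\sqrt{5}$ and $1/5$ respectively. The displayed constant is the square of the true one (Godsil's bound is for $\lvert U(t)_{u,u}\rvert^2$), so carrying out your computation proves a stronger, and in fact the correct, tight constant --- but it will not literally match the claim as printed.
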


\begin{proof}
    In (1a), $X$ can be viewed as a cone over an $(n-m-1)$-regular graph on $n-1$ vertices with apex $u$. If $X$ is a cone on a $d$-regular graph on $n$ vertices, then $|U_{A(X)}(t)_{u,u}|\geq \frac{d^2}{d^2+4n}$ with equality if and only if $t=\frac{\pi}{\sqrt{d^2+4n}}$ \cite{SedQW}. Thus, (1a) is immediate. In (1b), we have $n=(k-2)m+2$ and we may write $X=K_2\vee Y$, where $Y=\bigvee_{k-2} O_m$ is a $(n-m-2)$-regular graph on $n-2$ vertices. Invoking Corollary \ref{yipee1} and Theorems 12(1) and 14(2b) in \cite{Kirkland2023} yields (1b). Now, (1ci) is immediate from Theorem \ref{sed2}(1,2). To prove (1cii), note that $X=K_{|\mathcal{I}|}\vee K_{m,\ldots,m}$, and so $\sigma_u(A(X))=\{-1,\frac{1}{2}(d\pm\sqrt{\Delta})\}$ by \cite[Lemma 3(2)]{kirkland2023quantum}, where $d=n-m-1$. Let $S=\{-1\}$ and $a=\frac{|\mathcal{I}|-1}{|\mathcal{I}|}$ in Lemma \ref{eurekarem2}. If $a,b,c$ are integers such that $a[\frac{1}{2}(d+\sqrt{\Delta})]+b[\frac{1}{2}(d-\sqrt{\Delta})]+(-1)c=0$, then $a=b$ and $c=a(n-m-1)$ because $\Delta$ is not a perfect square. Thus, $a+b+c=a(n-m+1)=0$ for any integer $a$ if and only if $n=m-1$, a contradiction. Thus, Lemma \ref{eurekarem2} yields (1cii). Lastly in (1d), we have $X= K_{|\mathcal{I}|}\vee CP(n-|\mathcal{I}|)$, where $n-|\mathcal{I}|$ is even and $\sigma_w(A(CP(n-|\mathcal{I}|)))=\{0,-2,n-2\}$ for each vertex $w$ of $CP(n-|\mathcal{I}|)$ \cite[Example 2]{Kirkland2023}. Invoking \cite[Lemma 3(2)]{kirkland2023quantum}, we get $\sigma_w(A(X))=\{0,-2,\frac{1}{2}(n-3\pm\sqrt{\Delta})\}$. Since $w$ is strongly cospectral with its twin in $CP(n-|\mathcal{I}|)$, say $v$, \cite[Corollary 20(2)]{kirkland2023quantum} implies that $w$ and $v$ are also strongly cospectral in $X$ with $\sigma_{uv}^+(A(X))=\{-2,\frac{1}{2}(n-3\pm\sqrt{\Delta})\}$ and $\sigma_{uv}^-(A(X))=\{0\}$. To prove (1di), let $a,b,c$ be integers such that $a[\frac{1}{2}(n-3+\sqrt{\Delta})]+b[\frac{1}{2}(n-3-\sqrt{\Delta})]+(-2)c=0$. Since $\Delta$ is not a perfect square, we get $a=b$ and $c=\frac{a(n-3)}{2}$. In this case, $a+b+c=a\left(2+\frac{n-3}{2}\right)$ is even for any integer $a$ if and only if $n\equiv 3$ (mod 4). Applying \cite[Theorem 13]{Kirkland2023} yields the desired result for (1di), while (1dii) and (1diii) are resp.\ immediate from \cite[Theorem 11(1)]{kirkland2023quantum} and Corollary \ref{yipee1}. 
    
    In (2), we have $X=O_2\vee Y$, and so a direct application of \cite[Theorem 36]{Monterde2023} and \cite[Theorem 11(1)]{Kirkland2023} yields the desired result. In particular, the assumption in (2b) is equivalent to $\Delta$ is a perfect square and $\nu_2(n-m-2)=\nu_2(\sqrt{\Delta})$). Finally, (3) is immediate from Theorem \ref{sed2}(1,2).
\end{proof}

\begin{remark}
Let $X=CP(2k)$. Taking $n=2k$, $s=2$, and $m=2$ in Theorem \ref{cmgA}(2) yields $n-m-2=2k-4$ and $\sqrt{\Delta}=2k$. If $k$ is even, then $\nu_2(2k-4)\neq \nu_2(2k)$, and so PST occurs in $X$ by Theorem \ref{cmgA}(2a). If $k$ is odd, then $\nu_2(2k-4)= \nu_2(2k)$ and $g=2$, so that 
each $u\in V(X)$ is tightly $(\frac{1}{k})$-sedentary at $t=\frac{j\pi}{g}$ for any odd $j$ by Theorem \ref{cmgA}(2bi). This coincides with Theorem \ref{cmg11} since $X$ is regular.
\end{remark}

We now present an analogue of Corollary \ref{cmg11kne} for the adjacency case.

\begin{corollary}
\label{cmg11kneA}
For all $n\geq 3$, let $X=K_n\backslash e$ with non-adjacent vertices $u$ and $v$.
\begin{enumerate}
\item If $n=3$, then perfect state transfer occurs between $u$ and $v$ in $X$ and the degree two vertex of $X$ is not sedentary.
\item If $n=4$, then the two degree three vertices of $X$ admit pretty good state transfer. Moreover, for all $n\geq 5$, each degree $n-1$ vertex of $X$ is sharply $(1-\frac{2}{n-2})$-sedentary.
\item For all $n\geq 4$, pretty good state transfer occurs between $u$ and $v$ in $X$.
\end{enumerate}
\end{corollary}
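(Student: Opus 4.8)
The plan is to recognize $K_n\backslash e$ as the complete multipartite graph $K_{2,1,\ldots,1}$ (with $n-2$ singleton parts), equivalently $O_2\vee K_{n-2}$, and then read off each assertion from the matching case of Theorem \ref{cmgA}. Under this identification the non-adjacent vertices $u,v$ (of degree $n-2$) form the unique part of size two, while the $n-2$ vertices of degree $n-1$ occupy the singleton parts. Thus $u$ and $v$ are governed by Theorem \ref{cmgA}(2) with $m=1$, whereas each degree-$(n-1)$ vertex is governed by Theorem \ref{cmgA}(1) with $\mathcal{I}$ the set of singleton parts, so that $|\mathcal{I}|=n-2$ and $m=2$.

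The computational heart is a single number-theoretic observation. In every relevant case the governing discriminant collapses to the same quantity: taking $m=1$ in Theorem \ref{cmgA}(2) gives $\Delta=(n-3)^2+8(n-2)$, and taking $m=2$, $|\mathcal{I}|=n-2$ in Theorem \ref{cmgA}(1c) gives $\Delta=(n-3)^2+8(n-2)$ as well, both equal to $(n+1)^2-8$. I would show this is not a perfect square for $n\geq 4$ by sandwiching: since $2n+1>8$ for $n\geq 4$, one gets $n^2<(n+1)^2-8<(n+1)^2$, so $\Delta$ lies strictly between consecutive squares. Equivalently, $(n+1)^2-\Delta=8$ has no solution with $n\geq 4$.

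With $\Delta$ non-square established, statement (3) is immediate from Theorem \ref{cmgA}(2a)(i): for $n\geq 4$ we also have $n\neq m+2$, so only case (i) is in force, giving PGST between $u$ and $v$. For statement (2), the split at $n=4$ versus $n\geq 5$ is exactly the transition $|\mathcal{I}|=2$ versus $|\mathcal{I}|\geq 3$. When $n=4$ the two degree-$3$ vertices form singleton parts with $|\mathcal{I}|=2$, and Theorem \ref{cmgA}(1b)(i) applies, with $\Delta=(n-m-3)^2+8(n-2)=17$ for $m=2$, again non-square, giving PGST. When $n\geq 5$ we have $|\mathcal{I}|=n-2\geq 3$, so Theorem \ref{cmgA}(1c)(ii) yields sharp $\bigl(1-\tfrac{2}{n-2}\bigr)$-sedentariness at each degree-$(n-1)$ vertex.

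The one delicate point is statement (1), the case $n=3$, which is the main obstacle precisely because it is where the PGST/PST dichotomy of Theorem \ref{cmgA}(2a) degenerates: here $n=m+2$, so $X=K_{2,1}=P_3$ is complete bipartite, and although $\Delta=8$ is non-square (superficially pointing to case (i), hence PGST), the correct conclusion is perfect state transfer. The resolution is that $n=m+2$ collapses $Y$ to the empty graph, so $u$ has the symmetric three-element support $\{-\sqrt{2},0,\sqrt{2}\}$ about the central eigenvalue $0$; the twin-PST criterion (Theorem \ref{cmgA}(2a)(iii), or directly Theorems 11 and 14 of \cite{Kirkland2023}) then forces PST rather than PGST. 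I would confirm this with the explicit decomposition of $A(P_3)$, which gives $U(t)_{u,v}=\tfrac12\cos(\sqrt{2}\,t)-\tfrac12$, of modulus $1$ at $t=\pi/\sqrt{2}$. Finally, the degree-two vertex $w$ is the apex of the cone $O_1\vee O_2$, so the cone bound (the $d=0$ instance underlying Theorem \ref{cmgA}(1a)) degenerates to $0$; equivalently $U(t)_{w,w}=\cos(\sqrt{2}\,t)$ has infimum $0$ in modulus, so $w$ is not sedentary.
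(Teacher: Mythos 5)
Your proposal is correct and follows essentially the same route as the paper: identify $K_n\backslash e$ with the complete multipartite graph $O_2\vee K_{n-2}$, observe that the relevant discriminant $\Delta=(n-3)^2+8(n-2)=n^2+2n-7$ is never a perfect square, and read off statements (2) and (3) from Theorem \ref{cmgA}(1c) (with $|\mathcal{I}|=n-2$, $m=2$) and Theorem \ref{cmgA}(2a) (with $m=1$), handling $n=3$ separately via the explicit walk on $P_3$. Your treatment is somewhat more careful than the paper's at the two delicate spots --- the sandwich argument $n^2<(n+1)^2-8<(n+1)^2$ justifying non-squareness for $n\geq 4$, and the explicit verification of PST in $P_3$ where cases (i) and (iii) of Theorem \ref{cmgA}(2a) overlap --- but the underlying argument is the same.
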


\begin{proof}
    Note that $\Delta=n^2+2n-7$ is not a perfect square for all $n\geq 3$. Now, (1) follows from PST in $P_3$ and Example \ref{star}. The first statement in (2) follows from \cite[Theorem]{Kirkland2023} and the fact that $\sigma_w(A(X))=\{-1,\frac{1}{2}(1\pm\sqrt{17})\}$ for a degree three vertex $w$ in $X$, while the second statement is obtained by applying Theorem \ref{cmgA}(1c) with $|\mathcal{I}|=n-2$ and $m=2$. Applying Theorem \ref{cmgA}(2a) with $m=1$ yields (3).
\end{proof}

\section{Threshold graphs}\label{secTG}

A connected \textit{threshold graph} is a graph that is either of the form:
\begin{itemize}
    \item $((((O_{m_1}\vee K_{m_2})\cup O_{m_3})\vee K_{m_4})\ldots)\vee K_{m_{2k}}:=\Gamma(m_1,\ldots,m_{2k})$
    \item $((((K_{m_1}\cup O_{m_2})\vee K_{m_3})\cup O_{m_4})\ldots)\vee K_{m_{2k+1}}:=\Gamma(m_1,\ldots,m_{2k+1})$.
\end{itemize}
PST and fractional revival in threshold graphs under Laplacian dynamics have been investigated in \cite{Severini} and \cite{Kirkland2020}, respectively. Here, we characterize Laplacian sedentariness in this family of graphs.

\begin{theorem}
\label{psed}
    Let $X\in\{\Gamma(m_1,\ldots,m_{2k}),\Gamma(m_1,\ldots,m_{2k+1})\}$. If $m_1=2$, $m_2\equiv 2$ (mod 4) and $m_j\equiv 0$ (mod 4) for all $j\neq 1,2$, then perfect state transfer occurs between the two vertices of $X_1\in\{O_{m_1},K_{m_1}\}$. Otherwise, each vertex in $X$ is sedentary.
\end{theorem}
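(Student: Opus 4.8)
### The plan

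The plan is to exploit the recursive structure of threshold graphs, where each vertex lives in a ``cell'' $X_i \in \{O_{m_i}, K_{m_i}\}$, and to recognize that every such cell is a twin set. This immediately brings Theorem~\ref{twinschar} and Theorem~\ref{sed2} into play: sedentariness at a vertex $u$ is controlled entirely by the eigenvalue $\theta = q\operatorname{deg}(u)+\omega-\eta$ attached to $u$'s cell via Lemma~\ref{alphabeta}, together with whether $u$ is strongly cospectral with some other vertex. The key structural fact I would establish first is that for $M=L$, each cell $X_i$ has size $m_i$, and the relevant eigenvalue $\theta_i$ for that cell is determined by the degrees in $X$, which in a threshold graph are computed from the nested join/union construction. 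I would then split along exactly the dichotomy in Theorem~\ref{twinschar}: a cell of size $m_i \geq 3$ forces sedentariness unconditionally by part~(1)(i), while a cell of size $m_i = 2$ requires the delicate analysis of part~(2).

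The heart of the argument is therefore the two-vertex cells. For a cell of size $2$, the two vertices $u,v$ are strongly cospectral (their associated $\theta$ is a simple eigenvalue of the relevant reduced quotient matrix), so by Corollary~\ref{yipee1} they either exhibit PST or are sedentary — there is no third option. I would invoke the known Laplacian PST characterization for threshold graphs from \cite{Severini} (and the twin-vertex PST/PGST characterizations in \cite{Kirkland2023}) to pin down precisely when PST occurs. The claim is that PST between the two vertices of a size-$2$ cell happens exactly under the stated parity/divisibility conditions on the $m_j$'s, namely $m_1 = 2$, $m_2 \equiv 2 \pmod 4$, and $m_j \equiv 0 \pmod 4$ for $j \neq 1,2$. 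This is where I expect the integer-eigenvalue machinery of Lemma~\ref{ts} to enter: since $L(X) \in \mathbb{Z}[t]$ has integer eigenvalues, the condition $\nu_2(\cdot)$ from Lemma~\ref{ts}(2) translates the PST/sedentariness split into a clean statement about $2$-adic valuations of the eigenvalue differences, which in turn reduce to congruences on the $m_j$.

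Concretely, I would organize the proof as follows. First, fix a vertex $u$ in cell $X_i$ and apply Lemma~\ref{alphabeta} to identify $\theta_i$ and the eigenspace structure. Second, dispose of the cells with $m_i \geq 3$ using Theorem~\ref{twinschar}(1): these are always sedentary, independent of any congruence conditions, so they can never obstruct the ``otherwise'' conclusion. Third, handle cells with $m_i = 2$ by showing strong cospectrality holds and then applying the PST characterization; the computation reduces to checking that the eigenvalue support $\sigma_{uv}^+(L)$ consists of integers $\theta + b_j\sqrt{\Delta}$ with $\Delta = 1$, and that the parity condition $\sum m_j$ odd in Theorem~\ref{twinschar}(2) fails precisely in the exceptional case. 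Fourth, combine: if \emph{any} cell has $m_i \geq 3$, or if \emph{any} size-$2$ cell fails the PST condition, then $X$ has a sedentary vertex, and I would argue that the only configuration in which \emph{every} vertex fails to be sedentary forces all the stated congruences simultaneously and isolates the single size-$2$ cell $X_1$.

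The main obstacle will be the bookkeeping for the $m_i = 2$ cells: computing the exact Laplacian eigenvalues attached to a size-$2$ cell embedded deep in the nested threshold construction, and verifying that the $2$-adic valuation conditions collapse to the simple congruences $m_2 \equiv 2 \pmod 4$ and $m_j \equiv 0 \pmod 4$. I anticipate that the degrees, and hence $\theta$, depend on cumulative sums $\sum_{j \geq i} m_j$ in a way that must be tracked carefully through the alternating join/union structure, and that the PST condition from \cite{Severini}/\cite{Kirkland2023} must be shown equivalent to the stated parity conditions — this equivalence is the real content and where a careful induction on $k$ (the number of cells) seems unavoidable.
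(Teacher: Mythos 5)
Your strategy is sound and lands on the same skeleton as the paper's proof: the cells of a threshold graph are twin sets, so Corollary \ref{yipee1}/Corollary \ref{yipee2} reduce everything to deciding where PST can occur. But the paper's actual proof is far more economical than what you propose: it cites \cite[Theorem 2]{Severini} verbatim for the PST statement, and for the ``otherwise'' direction it combines Corollary \ref{yipee2} with the integrality of the Laplacian spectrum and a known result (\cite[Corollary 67]{kirkland2023quantum}) asserting the \emph{absence} of strong cospectrality in the remaining threshold graphs. Two points of contrast are worth flagging. First, you assert that the two vertices of any size-$2$ cell are automatically strongly cospectral because ``their associated $\theta$ is a simple eigenvalue''; that is not automatic --- simplicity of $\theta$ can fail for a cell embedded deep in the nested construction, and the paper leans on exactly the opposite fact, namely that strong cospectrality is absent outside the exceptional case. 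Your argument survives this because a twin vertex that is not strongly cospectral is sedentary by Corollary \ref{yipee2} anyway, but you should not present strong cospectrality of size-$2$ cells as given, and your induction must explicitly rule out PST in size-$2$ cells other than $X_1$ (the theorem localizes PST to $X_1$ alone). Second, re-deriving the congruences $m_1=2$, $m_2\equiv 2 \pmod 4$, $m_j\equiv 0 \pmod 4$ from Lemma \ref{ts} and $2$-adic valuations of the cumulative sums $\alpha_j=\sum_{r=1}^{j}m_r$ is a genuine and nontrivial computation that the paper simply outsources to \cite{Severini}; carrying it out means reproving that characterization rather than applying it. Finally, cells with $m_i=1$ are not twin sets on their own and must first be absorbed into an adjacent cell (as in the remark following Theorem \ref{thresh}) before any of the twin-set machinery applies; your outline does not address this.
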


\begin{proof}
The first statement follows from \cite[Theorem 2]{Severini}. The second follows from Corollary \ref{yipee2}, $L(X)$ having integer eigenvalues and the absence of strongly cospectrality in $X$ \cite[Corollary 67]{kirkland2023quantum}.
\end{proof}

Next, we provide bounds on the sedentariness of a vertex in a threshold graph.

\begin{theorem}
\label{thresh}
    Let $X\in\{\Gamma(m_1,\ldots,m_{2k}),\Gamma(m_1,\ldots,m_{2k+1})\}$. Suppose $j\in\{1,\ldots,2k+1\}$ and $u$ is a vertex of $Y_j\in\{O_{m_j},K_{m_j}\}$. Let $\alpha_j=\sum_{r=1}^jm_r$. The following hold.
 \begin{enumerate}
 \item Let (a) $j$ be even, $X=\Gamma(m_1,\ldots,m_{2k})$ and $h=2k$ or (b) $j\geq 3$ be odd, $X=\Gamma(m_1,\ldots,m_{2k+1})$ and $h=2k+1$. For each integer $\ell$ having the same parity as $h$ such that $j+2\leq \ell \leq h$, define $\beta_{\ell,h}:=m_{\ell}+m_{\ell+2}+\ldots+m_h$. Then
 \begin{center}
     $\displaystyle |U_{L(X)}(t)_{u,u}|\geq 1-2\sum_{r=j}^{h}\frac{(-1)^{r+h}}{\alpha_r}\quad \text{for all $t$},$
 \end{center}
with equality if and only if for some $t$, $e^{it\alpha_{h}}=-1$ and $e^{it\beta_{\ell,h}}=1$ for each even $j+2\leq \ell\leq h$.
\item Let $j=1$, $m_1\geq 2$ and $X=\Gamma(m_1,\ldots,m_{2k+1})$. For each odd $3\leq \ell\leq 2k+1$, define $\beta_{\ell}=\sum_{r=0}^{(2k+1-\ell)/2}m_{\ell+2r}=m_{\ell}+m_{\ell+2}+\ldots+m_{2k+1}$. Then
 \begin{equation}
 \label{2}
 |U_{L(X)}(t)_{u,u}|\geq 1-2/m_1\quad \text{for all $t$},
\end{equation}
with equality if and only if for some $t$, $e^{it(m_1+\beta_3)}=-1$, $e^{it\alpha_{2k+1}}=e^{it(\alpha_{\ell}+\beta_{\ell+2})}=1$ for each odd $3\leq \ell\leq 2k-1$, and $e^{it\beta_{\ell+2}}=1$ for each odd $1\leq \ell\leq 2k-1$.
\item Let (a) $j$ be odd, $X=\Gamma(m_1,\ldots,m_{2k})$ and $h=2k$ or (b) $j$ be even, $X=\Gamma(m_1,\ldots,m_{2k+1})$ and $h=2k+1$. For each integer $\ell$ that has the same parity as $h$ such that $j\leq \ell\leq h$, define $\beta_{\ell}=\sum_{r=0}^{(2k-\ell)/2}m_{\ell+2r}=m_{\ell}+m_{\ell+2}+\ldots+m_{h}$. Then
 \begin{equation}
 \label{3}
 |U_{L(X)}(t)_{u,u}|\geq 1-2/\alpha_j\quad \text{for all $t$},
\end{equation}
with equality if and only if for some $t$, $e^{it\beta_{j+1}}=-1$, $e^{it\alpha_{h}}=e^{it\beta_{\ell}}=1$ for each $j+2\leq \ell\leq h$ and $e^{it(\alpha_{\ell}+\beta_{\ell+2})}=1$ for each $j\leq \ell\leq h$, where $\ell$ has the same parity as $h$.
    \end{enumerate}
\end{theorem}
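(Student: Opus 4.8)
The unifying idea is that the $m_j$ vertices of each block $Y_j$ form a twin set, so each such vertex is amenable to Theorem \ref{sed2} (equivalently, to Theorem \ref{eureka} with $S=\{\theta_j\}$ a singleton). Working with $M=-L$, whose diagonal walk magnitudes coincide with those of $L$, Lemma \ref{alphabeta} supplies the twin eigenvalue $\theta_j$, and the whole problem collapses to computing the single diagonal idempotent entry $(E_{\theta_j})_{u,u}$: Theorem \ref{sed2} then reads $|U_L(t)_{u,u}|\geq 2(E_{\theta_j})_{u,u}-1 = 1-\tfrac{2}{m_j}+2F_{u,u}$, where $F_{u,u}$ records the contribution to $(E_{\theta_j})_{u,u}$ of those eigenvectors for $\theta_j$ that are constant on each cell rather than supported inside $Y_j$. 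Matching this with the three stated bounds amounts to showing that $(E_{\theta_j})_{u,u}$ equals $1-\sum_{r=j}^h(-1)^{r+h}/\alpha_r$ in (1), $1-1/m_1$ in (2), and $1-1/\alpha_j$ in (3).

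To obtain $(E_{\theta_j})_{u,u}$ I would exploit the equitable partition of $X$ into its cells together with the recursion $\Gamma(\ldots,m_{2k})=W\vee K_{m_{2k}}$ with $W=\Gamma(\ldots,m_{2k-2})\cup O_{m_{2k-1}}$ (and its analogue for the odd form). The disjoint union keeps the Laplacian block-diagonal, while the Laplacian join formula shifts every nonzero eigenvalue of a joined piece by the number of freshly joined vertices; iterating these shifts is precisely what manufactures the partial sums $\alpha_r=\sum_{r'\le r}m_{r'}$ as the eigenvalues in $\sigma_u(L)$ and forces the cell-constant eigenvectors sharing the value $\theta_j$ to contribute $u$-coordinates whose squared magnitudes telescope into reciprocals of the $\alpha_r$. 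Carrying this out produces $(E_{\theta_j})_{u,u}$ in closed form --- namely $1-1/\alpha_j$ in (3), $1-1/m_1$ in (2) (where no cell-constant eigenvector meets $\theta_j$, so $F_{u,u}=0$), and $1-\sum_{r=j}^h(-1)^{r+h}/\alpha_r$ in (1) --- and confirms $(E_{\theta_j})_{u,u}\ge\tfrac12$ so that Theorem \ref{sed2} applies.

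The equality clauses are then read off from condition (\ref{eureka333}) attached to $S=\{\theta_j\}$, which, since $L(X)$ has integer eigenvalues, is governed by Lemma \ref{ts}(1a) with $\Delta=1$: equality at some $t$ is equivalent to $e^{it(\theta_j-\lambda)}=-1$ for every other $\lambda\in\sigma_u(L)$. Fixing one reference eigenvalue $\lambda_{\mathrm{ref}}$ in the complement, this single family splits into one genuine sign condition $e^{it(\theta_j-\lambda_{\mathrm{ref}})}=-1$ together with the compatibility conditions $e^{it(\lambda_{\mathrm{ref}}-\lambda)}=1$ among the remaining eigenvalues; expressing the differences $\theta_j-\lambda_{\mathrm{ref}}$ and $\lambda_{\mathrm{ref}}-\lambda$ through the partial sums shows they collapse exactly to $\alpha_h$ and the various $\beta_{\ell,h}$ (resp.\ $\beta_\ell$ and $\alpha_\ell+\beta_{\ell+2}$), yielding the stated exponential equalities verbatim.

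The main obstacle is the second step: organizing the recursive spectral decomposition so that $(E_{\theta_j})_{u,u}$ emerges as the precise telescoping combination of $1/\alpha_r$'s under all four configurations (parity of $j$ crossed with the two forms of $X$), and correctly identifying, in each configuration, exactly which eigenvalues lie in $\sigma_u(L)$ and how their pairwise differences reduce to $\alpha_h$ and the $\beta$'s. Once the projection and the eigenvalue differences are pinned down, the inequalities follow immediately from Theorem \ref{sed2} and the equality conditions from Lemma \ref{ts}.
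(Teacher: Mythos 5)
Your plan works for parts (2) and (3), and there it coincides with the paper's argument: $S$ is the singleton consisting of the twin eigenvalue from Lemma \ref{alphabeta} (namely $\alpha_1+\beta_3$ in (2) and $\beta_{j+1}$ in (3)), its projection entry $1-1/m_1$ resp.\ $1-1/\alpha_j$ is obtained by tracking how each join shifts eigenvalues and telescopes the reciprocals of the $\alpha_r$, and Theorem \ref{eureka} together with condition (\ref{eureka333}) (using $0$ as the reference eigenvalue) gives both the bound and the equality clause.

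Part (1), however, does not go through as you describe. The twin eigenvalue of $u\in K_{m_j}$ is $\theta_j=\alpha_j+\beta_{j+2,h}$, and its projection entry is $(E_{\theta_j})_{u,u}=1-1/\alpha_j$, exactly as in case (3), \emph{not} the alternating sum $1-\sum_{r=j}^{h}(-1)^{r+h}/\alpha_r$. (Test it on $\Gamma(1,2,1,1)$ with $u$ in the $K_2$ cell: $\theta_2=4$ and $(E_4)_{u,u}=2/3=1-1/\alpha_2$, while your claimed formula predicts $43/60$.) Consequently the singleton $S=\{\theta_j\}$ only yields $|U_{L(X)}(t)_{u,u}|\geq 1-2/\alpha_j$, which is strictly weaker than the stated bound whenever $h>j$, so no computation of $(E_{\theta_j})_{u,u}$ can "match" the statement. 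To prove (1) you must enlarge $S$ to the whole chain of dominating eigenvalues created by the successive joins above the cell of $u$, namely $S=\{\alpha_\ell+\beta_{\ell+2,h}:\ell\equiv h\ (\mathrm{mod}\ 2),\ j\leq\ell\leq h\}$ (with $\beta_{h+2,h}:=0$, so $\alpha_h\in S$); each additional member contributes $(E_{\alpha_\ell+\beta_{\ell+2,h}})_{u,u}=1/\alpha_{\ell-1}-1/\alpha_\ell$, and summing telescopes to $a=1-\sum_{r=j}^{h}(-1)^{r+h}/\alpha_r$, after which Theorem \ref{eureka} applies. The shape of the equality clause in (1) --- one exponential forced to $-1$ and several forced to $+1$ that cannot be rewritten as a single family $e^{it(\theta_j-\lambda)}=-1$ --- is the structural signal that $|S|>1$ there; your Lemma \ref{ts} reduction with a single reference eigenvalue only fits (2) and (3). (The paper's own proof of (1) nominally writes $S=\{\alpha_{2k}\}$, but the recursion it runs on the projection entries is precisely the accumulation over this chain.)
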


\begin{proof}
    We first prove (1). For each even $j\leq h\leq 2k$, let $X_{h}=((((O_{m_1}\vee K_{m_2})\cup O_{m_3})\vee K_{m_4})\ldots)\vee K_{m_{h}}$ (so that $X_{2k}=X$). Since $\sigma_u(L(K_{m_j}))=\{0,m_j\}$, repeatedly applying \cite[Lemma 3(2)]{kirkland2023quantum} $(h-j)/2$ times starting from $K_{m_j}$ to $X_j$ all the way to $X_h$ gives us
    \begin{equation}
    \label{sup}
        \sigma_u(L(X_{h}))=\{\beta_{\ell,h}:j+2\leq \ell\leq h\}\cup\{0,\alpha_h\}.
    \end{equation}
    Since $[m_{h+2}\textbf{1},-\alpha_h\textbf{1}]^T$ is an eigenvector for $L(X_{h+2})$ associated with $\alpha_{h+2}$, the multiplicity of $\alpha_{h+2}$ is equal to that of $\alpha_h$ plus one. Thus, $(E_{\alpha_{h+2}})_{u,u}=(E_{\alpha_{h}})_{u,u}+(\frac{1}{\alpha_{h+1}}-\frac{1}{\alpha_{h+2}})$.  Arguing inductively, we get $(E_{\alpha_{2k}})_{u,u}=1-\sum_{r=j}^{2k}\frac{(-1)^r}{\alpha_r}$ in $L(X)$.
    Taking $S=\{\alpha_{2k}\}$ and $a=(E_{\alpha_{2k}})_{u,u}$ in Theorem \ref{eureka} yields (1a). For (1b), the same argument applied to $X_{h}=((((K_{m_1}\cup O_{m_2})\vee K_{m_3})\cup O_{m_4})\ldots)\vee K_{m_{h}}$ for each odd $j\leq h\leq 2k+1$ yields the desired result.
    
    To prove (2), note that $\alpha_1=m_1$. The same argument used in (\ref{sup}) yields
    \begin{center}
        $\sigma_u(L(X))=\{\beta_{\ell}:3\leq \ell\leq 2k+1\ \text{is odd}\}\cup\{\alpha_{\ell}+\beta_{\ell+2}:1\leq \ell\leq 2k-1\ \text{is odd}\}\cup\{0,\alpha_{2k+1}\}$.
    \end{center}
    Note that $\sigma_u(L(K_{m_1}))=\{0,\alpha_1\}$ and $\alpha_1+\beta_{3,h}\in\sigma_u(L(X_h))$ for each odd $3\leq h\leq 2k+1$, where $\beta_{3,h}$ is defined in (1) and $X_{h}=((((K_{m_1}\cup O_{m_2})\vee K_{m_3})\cup O_{m_4})\ldots)\vee K_{m_{h}}$. Since
    $E_{\alpha_1}=1-\frac{1}{\alpha_1}$ and the multiplicity of $\alpha_1+\beta_{3,h}\in\sigma_u(L(X_h))$ is equal to that of $\alpha_1$, we get $(E_{\alpha_{1}+\beta_{3}})_{u,u}=E_{\alpha_1}=1-\frac{1}{\alpha_1}$. Taking $S=\{\alpha_1+\beta_3\}$ and $a=(E_{\alpha_{1}+\beta_3})_{u,u}=1-\frac{1}{\alpha_1}$ in Theorem \ref{eureka} yields (2).
   
   To prove (3a), suppose $j$ is odd and $X=\Gamma(m_1,\ldots,m_{2k})$. The same argument used in (\ref{sup}) yields
    \begin{center}
        $\sigma_u(L(X))=\{\beta_{\ell}:j+1\leq \ell\leq 2k\ \text{is even}\}\cup\{\alpha_{\ell}+\beta_{\ell+2}:j+1\leq \ell\leq 2k-2\ \text{is even}\}\cup\{0,\alpha_{2k}\}.$
    \end{center}
    For each odd $j\leq h\leq 2k$, let $X_{h}=(((((O_{m_1}\vee K_{m_2})\cup O_{m_3})\vee K_{m_4})\ldots)\cup O_{m_h})\vee K_{m_{h+1}}$. Note that $\sigma_u(L(O_{m_j}))=\{0\}$
    and $\beta_{j+1,h}\in\sigma_u(L(X_h))$ for each odd $j\leq h\leq 2k$, where $\beta_{j+1,h}$ is defined in (1). From the form of the $X_h$'s, we deduce that the multiplicity of $\beta_{j+1,h+2}\in\sigma_u(L(X_{h+2}))$ is equal to that of $\beta_{j+1,h}\in\sigma_u(L(X_{h}))$. Since $(E_{\beta_{j+1,j}})_{u,u}=\left(1-\frac{1}{m_j}\right)+\left(\frac{1}{m_j}-\frac{1}{\alpha_j}\right)=1-\frac{1}{\alpha_j}$ in $L(X_{j})$, we get $(E_{\beta_{j+1,h}})_{u,u}=1-\frac{1}{\alpha_j}$ in $L(X_{h})$ for each odd $j\leq h\leq 2k$. Letting $h=2k-1$, $S=\{\beta_{j+1}\}$ and $a=(E_{\beta_{j+1}})_{u,u}=1-\frac{1}{\alpha_j}$ in Theorem \ref{eureka} yields the desired conclusion in (3a). The same argument works for (3b).
\end{proof}

\begin{remark}
If $m_1=1$ and $X=\Gamma(m_1,\ldots,m_{2k})$, then $X=\Gamma(m_1',\ldots,m_{2k-1}')$, where $m_1'=m_2+1$ and $m_j'=m_{j+1}$ for all $j\geq 1$. In this case, Theorem \ref{thresh}(2) applies. Similarly, if $m_1=1$ and $X=\Gamma(m_1,\ldots,m_{2k+1})$, then Theorem \ref{thresh}(3) applies.
\end{remark}

\begin{remark}
Let $m_1=2$ so that $\alpha_1=2$ in Theorem \ref{thresh}(2-3). If $m_2\equiv 2$ and $m_r\equiv 0$ (mod 4) for all $r\neq 1,2$, then (\ref{2}) and (\ref{3}) yields $U_{L(X)}(\frac{\pi}{2})_{u,u}=0$ for $X\in \{\Gamma(m_1,\ldots,m_{2k+1}),\Gamma(m_1,\ldots,m_{2k})\}$. This is consistent with the fact that $u$ is involved in PST at $t=\frac{\pi}{2}$ by Theorem \ref{psed}. However, if $m_2\not\equiv 2$ or $m_r\not\equiv 0$ (mod 4) for some $r\neq 1,2$, then there is no time $t$ such that the lower bounds in (\ref{2}) and (\ref{3}) are attained respectively. In this case, $u$ is sedentary in $X$.
\end{remark}

Lastly, we have the following asymptotic result  that applies to $M_q$ for all $q$.

\begin{corollary}
\label{almost}
Almost all complete multipartite graphs and threshold graphs contain a sedentary vertex.
\end{corollary}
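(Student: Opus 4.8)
The plan is to interpret ``almost all'' in the standard asymptotic sense---that the proportion of graphs (in the relevant family, counted up to isomorphism) on $n$ vertices that contain a sedentary vertex tends to $1$ as $n\to\infty$---and then to reduce the statement to a counting argument built on top of the structural results already established. The key observation is that the characterizations proved earlier pin down \emph{exactly} which graphs in each family fail to contain a sedentary vertex, so the task is to show that those exceptional graphs form a vanishingly small fraction.

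For complete multipartite graphs, the heavy lifting is done by Corollary \ref{23}, which asserts that the \emph{only} complete multipartite graphs with no sedentary vertex are the cocktail party graphs $CP(2k)$ with $k$ even. Thus, for the Laplacian case, the plan is simply to count: a complete multipartite graph on $n$ vertices corresponds to a partition of $n$ into parts (the partite set sizes), so the number of complete multipartite graphs on $n$ vertices equals the number of partitions $p(n)$, while the exceptional graphs $CP(2k)$ contribute at most one graph for each admissible $n$ (namely $n=2k$ with $k$ even, i.e.\ $n\equiv 0 \pmod 4$). Hence the fraction of exceptional graphs among complete multipartite graphs on $n$ vertices is at most $1/p(n)$, which tends to $0$ since $p(n)\to\infty$. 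First I would state this counting bound, then invoke Corollary \ref{23} to conclude.

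For threshold graphs, the analogous input is Theorem \ref{psed}, which shows that a threshold graph fails to have a sedentary vertex only when it exhibits PST between the two vertices of $X_1$, and this happens precisely under the rigid arithmetic conditions $m_1=2$, $m_2\equiv 2\pmod 4$, and $m_j\equiv 0\pmod 4$ for all $j\neq 1,2$. The plan is to show that these exceptional threshold graphs are asymptotically negligible among all threshold graphs on $n$ vertices. A threshold graph on $n$ vertices is encoded by a composition $(m_1,\ldots,m_h)$ of $n$ (equivalently by its creation sequence), so the number of threshold graphs on $n$ vertices grows like $2^{n-1}$, whereas the exceptional ones are forced to satisfy stringent congruence constraints on every part; counting the compositions of $n$ obeying those constraints gives a count that is exponentially smaller than $2^{n-1}$, so the exceptional fraction again tends to $0$.

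The main obstacle is making the counting model precise and consistent across the two families, since ``almost all'' requires fixing what probability measure or enumeration one is using (partitions for complete multipartite graphs versus compositions/creation sequences for threshold graphs), and one must verify that the exceptional families really are sparse under the chosen model rather than merely finite in number for each $n$. For complete multipartite graphs the bound $1/p(n)\to 0$ is clean, but for threshold graphs I expect the delicate part to be bounding the number of compositions of $n$ all of whose parts satisfy the divisibility-by-four conditions and showing this is $o(2^{n-1})$; this amounts to a routine generating-function or direct estimate, but it must be carried out carefully to confirm exponential decay of the exceptional proportion.
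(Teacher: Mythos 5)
Your argument is essentially correct but takes a genuinely different route from the paper, and the difference matters for the scope of the conclusion. The paper does not use the exact characterizations at all: it observes that a complete multipartite graph with a partite set of size three has a twin set of size three, hence a sedentary vertex by Theorem \ref{sed2}(2), counts such graphs as $P(n-3)$ among the $P(n)$ partitions of $n$, and invokes the Hardy--Ramanujan asymptotic to get $P(n-3)/P(n)\to 1$; the same ``some cell has size at least three'' argument is then asserted for threshold graphs. The advantage of that route is that Theorem \ref{sed2}(2) holds for $M_q$ for every $q$, which is exactly the claim the paper makes in the sentence introducing the corollary. Your route instead leans on Corollary \ref{23} and Theorem \ref{psed}, which are \emph{Laplacian-specific} statements; so while your counting is clean (at most one exceptional complete multipartite graph per $n$, giving an exceptional fraction $\le 1/P(n)$, which is sharper than the paper's bound) and your composition-counting plan for threshold graphs is sound (compositions with every part forced into a fixed residue class mod $4$ are exponentially sparse among the $2^{n-1}$ compositions of $n$), it only establishes the corollary for $M=L$ and leaves the adjacency and general $M_q$ cases unproved. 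If you want the full statement, replace the exact characterizations by the observation that almost every partition of $n$ (respectively, almost every composition of $n$, since the compositions with all parts at most $2$ number only $O(\varphi^n)=o(2^{n-1})$) has a part of size at least three, and apply Theorem \ref{sed2}(2). Your closing worry about fixing the enumeration model is well taken --- the paper itself glosses over the partition-versus-composition distinction for threshold graphs --- but it is a presentational issue rather than a gap in either argument.
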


\begin{proof}
The number of non-isomorphic complete bipartite graphs on $n$ vertices is equal to $P(n)$, the number of partitions of $n$ (where the partitions represent the sizes of the partite sets). Similarly, the number of non-isomorphic complete bipartite graphs on $n$ vertices which contain a partite set of size three is equal to $P(n-3)$. From Theorem \ref{sed2}(2), we know that each vertex in a partite set of size three is sedentary. Using the famous asymptotic formula $P(n)\sim \frac{1}{4n\sqrt{3}}e^{\pi\sqrt{2n/3}}$ as $n\rightarrow\infty$ due to Hardy and Ramanujan \cite{HardyAsymptoticFI}, it follows that $P(n-3)/P(n)\rightarrow 1$ as $n\rightarrow\infty$. The same argument applies to threshold graphs.
\end{proof}

\section{Direct Product}\label{secDP}

In \cite[Theorem 4]{Monterde2023}, it was shown that the Cartesian product preserves sedentariness. In this section, we investigate whether the direct product also preserves sedentariness relative to the adjacency matrix.

The \textit{direct product} $X\times Y$ of $X$ and $Y$ is the graph with vertex set $V(X)\times V(Y)$ and adjacency matrix
\begin{equation*}
A(X)\otimes A(Y).
\end{equation*}
If $A(X)=\sum_{j}\lambda_j E_j$ is a spectral decomposition of $A(X)$, then it was shown in \cite[Lemma 4.2]{coutinho2016perfect} that
\begin{equation}
\label{dirprod1}
U_{A(X\times Y)}(t)=\sum_{j}E_j\otimes U_{A(Y)}(\lambda_jt).
\end{equation}
Since $A(K_m)=(m-1)(\frac{1}{m}\textbf{J}_m)+(-1)(I_m-\frac{1}{m}\textbf{J}_m)$, (\ref{dirprod1}) yields the following transition matrix for $K_m\times Y$
\begin{equation}
\label{dirprod2}
\begin{split}
U_{A(K_m\times Y)}(t)&=\frac{1}{m}\textbf{J}_m\otimes U_{A(Y)}((m-1)t) +(I_m-\frac{1}{m}\textbf{J}_m)\otimes U_{A(Y)}(-t).
\end{split}
\end{equation}
Recall that $K_2\times Y$ is called the \textit{bipartite double} of $Y$, and this graph is connected if and only if $Y$ is non-bipartite. Letting $m=2$ and taking the diagonal entries in (\ref{dirprod2}) gives us
\begin{equation}
\label{dirprod3}
\begin{split}
\left|U_{A(K_2\times Y)}(t)_{((u,v),(u,v)}\right|=|\operatorname{Re}(U_{A(Y)}(t)_{v,v})|.
\end{split}
\end{equation}

\begin{theorem}
\label{dirprod}
Let $u\in V(K_m)$ and $v\in V(Y)$. The following hold.
\begin{enumerate}
\item If $m\geq 3$ and $Y$ is $C$-sedentary at vertex $v$, where $C> \frac{1}{m-1}$, then $K_m\otimes Y$ is $(C-\frac{C+1}{m})$-sedentary at vertex $(u,v)$ for any vertex $u$ of $K_m$.
\item $K_2\times Y$ is $C$-sedentary at $(u,v)$ if and only if $|\operatorname{Re}(U_{A(Y)}(t)_{v,v})|\geq C$ for all $t$.
\end{enumerate}
\end{theorem}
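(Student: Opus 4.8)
The plan is to prove both parts by directly exploiting the explicit transition matrix formulas \eqref{dirprod2} and \eqref{dirprod3} that the paper has already derived, so almost all of the work reduces to estimating diagonal entries. For part (2), the statement is in fact immediate: equation \eqref{dirprod3} gives the exact identity $\lvert U_{A(K_2\times Y)}(t)_{((u,v),(u,v))}\rvert = \lvert\operatorname{Re}(U_{A(Y)}(t)_{v,v})\rvert$ for all $t$, and the claim is just the definition of $C$-sedentariness (Definition \ref{def}) applied to this quantity. So I would dispatch (2) in one line by taking the infimum over $t>0$ of both sides and invoking \eqref{inf}.

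For part (1), the first step is to read off the relevant diagonal entry from \eqref{dirprod2}. Fixing $u\in V(K_m)$ and $v\in V(Y)$, the $((u,v),(u,v))$ diagonal entry of $\frac{1}{m}\mathbf{J}_m\otimes U_{A(Y)}((m-1)t)$ is $\frac{1}{m}U_{A(Y)}((m-1)t)_{v,v}$, while the entry of $(I_m-\frac{1}{m}\mathbf{J}_m)\otimes U_{A(Y)}(-t)$ is $(1-\frac{1}{m})U_{A(Y)}(-t)_{v,v}$. Hence
\begin{equation}
\label{planeq}
U_{A(K_m\times Y)}(t)_{((u,v),(u,v))}=\tfrac{1}{m}\,U_{A(Y)}((m-1)t)_{v,v}+\bigl(1-\tfrac{1}{m}\bigr)\,U_{A(Y)}(-t)_{v,v}.
\end{equation}
The second step is to apply the reverse triangle inequality: the modulus of \eqref{planeq} is at least $(1-\frac{1}{m})\lvert U_{A(Y)}(-t)_{v,v}\rvert - \frac{1}{m}\lvert U_{A(Y)}((m-1)t)_{v,v}\rvert$. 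The third step uses the hypothesis that $Y$ is $C$-sedentary at $v$ to bound the first term below by $(1-\frac{1}{m})C$, together with the trivial unitary bound $\lvert U_{A(Y)}((m-1)t)_{v,v}\rvert\le 1$ to bound the subtracted term above by $\frac{1}{m}$. Combining these gives $\inf_{t>0}\lvert U_{A(K_m\times Y)}(t)_{((u,v),(u,v))}\rvert \ge (1-\frac{1}{m})C - \frac{1}{m} = C - \frac{C+1}{m}$, which is exactly the asserted sedentariness constant.

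The only genuinely substantive point — and the place where the hypotheses $m\ge 3$ and $C>\frac{1}{m-1}$ are consumed — is verifying that the resulting constant is admissible, namely that $C-\frac{C+1}{m}>0$ so that it qualifies as a positive sedentariness constant under Definition \ref{def}. A quick rearrangement shows $C-\frac{C+1}{m}>0 \iff mC-C-1>0 \iff C>\frac{1}{m-1}$, which is precisely the stated hypothesis; and for $m\ge 3$ this threshold $\frac{1}{m-1}\le\frac12<1$ is compatible with $C\le 1$, so the hypothesis is non-vacuous. I expect this verification to be the main (though still routine) obstacle, since the inequality estimates themselves are forced once \eqref{planeq} is in hand. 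I would close by remarking that the bound uses only the crude estimate $\lvert U_{A(Y)}((m-1)t)_{v,v}\rvert\le 1$ on the "bad" term, so it is generally not tight, but it suffices to establish sedentariness.
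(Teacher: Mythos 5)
Your proposal is correct and follows essentially the same route as the paper: part (2) is read off directly from the identity \eqref{dirprod3}, and part (1) takes the $((u,v),(u,v))$-entry of \eqref{dirprod2}, applies the reverse triangle inequality, bounds $\lvert U_{A(Y)}(-t)_{v,v}\rvert=\lvert U_{A(Y)}(t)_{v,v}\rvert\geq C$ using sedentariness of $v$ and the other term crudely by $1$, yielding $\frac{m-1}{m}C-\frac{1}{m}=C-\frac{C+1}{m}$. Your explicit check that the hypothesis $C>\frac{1}{m-1}$ is exactly what makes this constant positive is a worthwhile addition the paper leaves implicit.
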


\begin{proof}
Taking the $((u,v),(u,v))$-entry of (\ref{dirprod2}) and applying triangle inequality yields
\begin{equation*}
\begin{split}
\left|U_{A(K_m\times Y)}(t)_{((u,v),(u,v)}\right|&=\left|\frac{1}{m}U_{A(Y)}((m-1)t)_{v,v}+\frac{m-1}{m}U_{A(Y)}(-t)_{v,v}\right|\\
&\geq \frac{m-1}{m}\left|U_{A(Y)}(-t)_{v,v}\right|-\frac{1}{m}\left|U_{A(Y)}((m-1)t)_{v,v}\right|\\
&\geq \frac{m-1}{m}\left|U_{A(Y)}(t)_{v,v}\right|-\frac{1}{m}.
\end{split}
\end{equation*}
Therefore, if $\left|U_{A(Y)}(t)_{v,v}\right|\geq C$ for all $t$, then $\left|U_{A(K_m\vee Y)}(t)_{((u,v),(u,v)}\right|\geq \frac{C(m-1)}{m}-\frac{1}{m}= C-\frac{C+1}{m}$. From this, (1) is immediate, and (2) follows from (\ref{dirprod3}).
\end{proof}

\begin{example}
\label{kmxY}
Let $T$ be a twin set in $Y$. If $|T|\geq 3$ and $C=1-\frac{2}{|T|}>\frac{1}{m-1}$, then Theorems \ref{sed2}(2) and \ref{dirprod}(1) imply that $K_m\otimes Y$ is $(C-\frac{C+1}{m})$-sedentary at $(u,v)$ for each $u\in V(K_m)$ and each $v\in T$.
\end{example}

Next, we examine direct products of complete graphs. Let $[n]=\{1,2,\ldots,n\}$. Denote $Y_1\times Y_2\times \ldots\times Y_n$ by $\bigtimes_{j\in [n]} Y_j$ and the direct product of $n$ copies of $Y$ by $Y^{\times n}$. For the graph $Y_1\times Y_2\times \ldots\times Y_n$, the ordering of indices $1,2,\ldots,n$ does not matter since the direct product is commutative up to isomorphism. In our next result, we determine the diagonal entries of the transition matrix of $\bigtimes_{j\in [n]} K_{m_j}$.

\begin{theorem}
\label{complete}
Let $n\geq 2$. For each $j\in [n]$, let $m_j\geq 2$ be an integer and consider $X=\bigtimes_{j\in[n]}K_{m_j}$. Then
\begin{equation}
\label{induct}
U_{X}(t)_{((v_1,\ldots,v_n),(v_1,\ldots,v_n))}=\frac{1}{\prod_{j\in [n]} m_j}\displaystyle\sum_{S\subseteq [n]}\left(\prod_{j\in S}(m_j-1)\right)e^{it(-1)^{|S|}\prod_{j\notin S}(m_j-1)},
\end{equation}
where $\prod_{j\in S}(m_j-1)=1$ whenever $S=\varnothing$. Moreover, the following hold.
\begin{enumerate}
\item If $m_j=2$ for some $j\in [n]$, then
\begin{equation}
\label{induct1}
U_{X}(t)_{((v_1,\ldots,v_n),(v_1,\ldots,v_n))}=\frac{1}{\prod_{j\in [n]\backslash\{j\}} m_j}\displaystyle\sum_{S\subseteq [n]\backslash\{j\}}\left(\prod_{j\in S}(m_j-1)\right)\cos\left(t\prod_{j\notin S}(m_j-1)\right).
\end{equation}
Thus, if there is a time $t_1$ such that $\cos\left(t_1\prod_{j\notin S}(m_j-1)\right)<0$ for each $S\subseteq [n]\backslash\{j\}$, then each vertex of $X$ is not sedentary. In particular, if each $m_j$ is even, then each vertex of $X$ is not sedentary.
\item If $\displaystyle\prod_{j\in [n]}(m_j-1)\neq \frac{1}{2}\prod_{j\in[n]} m_j$, then each vertex of $X$ is $\displaystyle\frac{2}{\prod_{j\in [n]} m_j}\bigg|\prod_{j\in [n]}(m_j-1)-\frac{1}{2}\prod_{j\in[n]} m_j\bigg|$-sedentary, and this is tight at time $t=\pi$ whenever each $m_j$ is odd. In particular, if $m\geq 2$, then each vertex of $K_m^{\times n}$ is $\displaystyle\frac{2}{m^n}\bigg|(m-1)^n-\frac{1}{2}m^n\bigg|$-sedentary.
\end{enumerate}
\end{theorem}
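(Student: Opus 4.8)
The plan is to derive the master formula (\ref{induct}) by induction on $n$ and then read off parts (1) and (2) from its explicit shape. Write $v=(v_1,\dots,v_n)$ and $v'=(v_1,\dots,v_{n-1})$, and set $P=\prod_{j\in[n]}m_j$. For the base case $n=1$, formula (\ref{induct}) is exactly the spectral decomposition $U_{K_{m_1}}(t)_{v,v}=\tfrac{1}{m_1}e^{it(m_1-1)}+\tfrac{m_1-1}{m_1}e^{-it}$ coming from $A(K_{m_1})=(m_1-1)\tfrac1{m_1}\mathbf{J}+(-1)(I-\tfrac1{m_1}\mathbf{J})$. For the inductive step I would write $X=K_{m_n}\times X'$ with $X'=\bigtimes_{j\in[n-1]}K_{m_j}$ and take the diagonal entry of (\ref{dirprod2}):
\[
U_X(t)_{v,v}=\tfrac{1}{m_n}U_{X'}((m_n-1)t)_{v',v'}+\tfrac{m_n-1}{m_n}U_{X'}(-t)_{v',v'}.
\]
Substituting the inductive formula for $U_{X'}$ and splitting the sum over $T\subseteq[n]$ according to whether $n\in T$ reproduces both terms: the subsets with $n\notin T$ give the first term (the extra factor $m_n-1$ joining $\prod_{j\notin T}$), and those with $n\in T$ give the second (contributing the factor $m_n-1$ and the sign flip in $(-1)^{|T|}$). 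This reindexing is the bulk of the argument but is entirely routine; note the value is independent of $v$, so it holds at every vertex.

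For part (1), if $m_{j_0}=2$ then $(m_{j_0}-1)=1$ and, since $A$ is real symmetric, $U_{X'}(-t)=\overline{U_{X'}(t)}$, so the displayed recursion collapses to $U_X(t)_{v,v}=\operatorname{Re}\!\big(U_{X'}(t)_{v',v'}\big)$; inserting the master formula for $X'=\bigtimes_{j\neq j_0}K_{m_j}$ and taking real parts yields (\ref{induct1}). The map $t\mapsto U_X(t)_{v,v}$ is thus real and continuous with value $1$ at $t=0$. If at some $t_1$ every cosine $\cos\big(t_1\prod_{j\notin S}(m_j-1)\big)$ is negative, then, as each coefficient $\prod_{j\in S}(m_j-1)$ is positive, $U_X(t_1)_{v,v}<0$, and the intermediate value theorem produces a zero in $(0,t_1)$, so $u$ is not sedentary. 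When (in addition) each $m_j$ is even, every $m_j-1$ is odd, hence every frequency $\prod_{j\notin S}(m_j-1)$ is odd and $t_1=\pi$ makes each cosine equal to $-1$ (indeed $U_X(\pi)_{v,v}=-1$), giving the claim.

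For part (2), the eigenvalues of $X$ are the integers $(-1)^{|S|}\prod_{j\notin S}(m_j-1)$; the subset $S=[n]$ contributes the eigenvalue $(-1)^n$ with diagonal spectral weight $q:=\tfrac1P\prod_j(m_j-1)$, and the target constant is precisely $|2q-1|=\tfrac2P\big|\prod_j(m_j-1)-\tfrac P2\big|$. When $q\ge\tfrac12$, taking $S=\{(-1)^n\}$ and $a=q$ in Theorem \ref{eureka} gives $|U_X(t)_{v,v}|\ge 2q-1$ for all $t$. Since the spectrum is integral, $U_X(t)_{v,v}$ is $2\pi$-periodic, so $u$ is periodic and Lemma \ref{persed} promotes this to tight sedentariness with constant $\min_{[0,\rho]}|U_X(t)_{v,v}|$, which Lemma \ref{ts} (equality in (\ref{eureka})) identifies as $2q-1$. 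For the stated tightness at $t=\pi$ when all $m_j$ are odd, every $S\ne[n]$ has an even frequency, so $e^{i\pi\mu_S}=1$ while $e^{i\pi(-1)^n}=-1$; hence $U_X(\pi)_{v,v}=(1-q)-q=1-2q$, attaining $|2q-1|$. The $K_m^{\times n}$ statement is the specialization $m_1=\dots=m_n=m$.

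The hard part is the lower bound $|U_X(t)_{v,v}|\ge 1-2q$ in the regime $q<\tfrac12$, where the single-eigenvalue application of Theorem \ref{eureka} is vacuous and the complementary-subset version loses too much (the remaining phasors can partially cancel, so $\big|\sum_{S\ne[n]}c_Se^{it\mu_S}\big|$ is not bounded below by $1-q$). Here I would exploit periodicity together with the fact that, \emph{provided every $m_j\ge 3$}, the eigenvalue $(-1)^n$ is the unique one of minimal modulus; I would then bound $\min_t|U_X(t)_{v,v}|$ below by $1-2q$ either by induction on $n$ through $U_X(t)_{v,v}=\tfrac1{m_n}U_{X'}((m_n-1)t)_{v',v'}+\tfrac{m_n-1}{m_n}\overline{U_{X'}(t)_{v',v'}}$, carefully tracking the correlation between the two arguments (this correlation, not a crude triangle inequality, is the crux), or by the integer linear-relation analysis of Lemma \ref{eurekarem2} to rule out cancellation below $1-2q$. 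The restriction $m_j\ge 3$ is genuinely needed: if some $m_j=2$ then $q<\tfrac12$ and the vertex is in fact not sedentary by part (1), since $|U_X(t)_{v,v}|=|\operatorname{Re}(U_{X'}(t)_{v',v'})|$ then vanishes for some $t$.
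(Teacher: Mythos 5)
Your derivation of (\ref{induct}) and your treatment of part (1) coincide with the paper's argument (induction through (\ref{dirprod2}), then the intermediate value theorem), and your proof of part (2) in the regime $\prod_j(m_j-1)\ge\tfrac12\prod_j m_j$ is a correct and slightly cleaner packaging, via Theorem \ref{eureka} with $S$ the singleton $\{(-1)^n\}$, of what the paper does by hand. The gap you flagged in the complementary regime is genuine --- and it is a gap in the theorem, not merely in your proof. The paper's own proof of part (2) performs precisely the move you warned against: after factoring out the phase $e^{it(-1)^n}$ it asserts
\begin{equation*}
\Bigl|\prod_{j\in[n]}(m_j-1)+\sum_{S\subsetneq[n]}c_S\,e^{i\psi_S(t)}\Bigr|\;\ge\;\Bigl|\prod_{j\in[n]}(m_j-1)-\sum_{S\subsetneq[n]}c_S\Bigr|,\qquad c_S=\prod_{j\in S}(m_j-1),
\end{equation*}
which the reverse triangle inequality only justifies when $\prod_j(m_j-1)\ge\sum_{S\subsetneq[n]}c_S$, i.e.\ when $\prod_j(m_j-1)\ge\tfrac12\prod_j m_j$; otherwise the phasors $e^{i\psi_S(t)}$ can partially cancel one another, exactly as you feared.

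Indeed the stated bound fails. For $X=K_3^{\times 3}$ formula (\ref{induct}) gives $27\,U_X(t)_{v,v}=e^{8it}+6e^{-4it}+12e^{2it}+8e^{-it}$; at $t=\pi/2$ this equals $1+6-12-8i=-5-8i$, of modulus $\sqrt{89}\approx 9.43$, whereas the claimed constant, after clearing the factor $\tfrac1{27}$, is $2\,\bigl|8-\tfrac{27}{2}\bigr|=11$. So $|U_X(\pi/2)_{v,v}|=\sqrt{89}/27<11/27$ and the vertex is not $\tfrac{11}{27}$-sedentary; the value $11/27$ attained at $t=\pi$ is a critical value but not the infimum. (Corollary \ref{kmxY1}(2) happens to survive: with only two factors the sole case with $\prod_j(m_j-1)<\tfrac12\prod_j m_j$ is $K_3\times K_3$, where $|9\,U(t)_{v,v}|=|5+4\cos 3t|\ge 1$ can be checked directly.) You should therefore either restrict part (2) to the regime $\prod_j(m_j-1)\ge\tfrac12\prod_j m_j$, where your argument is complete, or accept that the other regime requires a finer, case-dependent analysis; in particular, do not invest effort in your proposed strategies for proving the bound $1-2q$ when $q<\tfrac12$, since $K_3^{\times 3}$ shows that bound is false.
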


\begin{proof}
We prove (\ref{induct}) by induction. The base case follows from the fact that $U_{A(K_n)}(t)_{u,u}=e^{it(n-1)}(\frac{1}{n}\textbf{J}_n)+e^{-it}(I_n-\frac{1}{n}\textbf{J}_n)$. For the induction step, let $k\geq 1$ be an integer and suppose (\ref{induct}) holds whenever $n=k$. Let $Y=K_{m_{k+1}}\times \bigtimes_{j\in[k]}K_{m_j}$. Applying (\ref{dirprod2}) to $Y$, and simplifying the resulting transition matrix yields
\begin{equation*}
\begin{split}
U_{Y}(t)_{((v_1,\ldots,v_n),(v_1,\ldots,v_n))}&=\frac{1}{m_{k+1}}\left[\frac{1}{\prod_{j\in [k]} m_j}\displaystyle\sum_{S\subseteq [k]}\left(\prod_{j\in S}(m_j-1)\right)e^{it(-1)^{|S|}\prod_{j\notin S}(m_j-1)(m_{k+1}-1)}\right]\\
&+\frac{m_{k+1}-1}{m_{k+1}}\left[\frac{1}{\prod_{j\in [k]} m_j}\displaystyle\sum_{S\subseteq [k]}\left(\prod_{j\in S}(m_j-1)\right)e^{it(-1)^{|S|+1}\prod_{j\notin S}(m_j-1)}\right]\\
&=\frac{1}{\prod_{j\in [k+1]} m_j}\displaystyle\sum_{S\subseteq [k]}\left(\prod_{j\in S}(m_j-1)\right)e^{it(-1)^{|S|}\prod_{j\notin S}(m_j-1)(m_{k+1}-1)}\\
&+\frac{1}{\prod_{j\in [k+1]} m_j}\displaystyle\sum_{S\cup \{k+1\}\subseteq [k+1]}\left(\prod_{j\in S\cup \{k+1\}}(m_j-1)\right)e^{it(-1)^{|S|+1}\prod_{j\notin S}(m_j-1)}.
\end{split}
\end{equation*}
Observe that the first summand in the second equality above is a summation that runs over all subsets of $[k+1]$ that do not contain $k+1$, whereas the second summand runs over all subsets of $[k+1]$ that contain $k+1$. Thus, combining the two summands yields (\ref{induct}) with $n=k+1$. This establishes (\ref{induct}) for all $n\geq 1$.

To prove (1), we may without loss of generality assume that $m_{n}=2$. Making use of the last equality above with $n=k+1$ yields
\begin{equation*}
\begin{split}
U_{X}(t)_{((v_1,\ldots,v_n),(v_1,\ldots,v_n))}&=\frac{1}{2\prod_{j\in [n-1]} m_j}\displaystyle\sum_{S\subseteq [n-1]}\left(\prod_{j\in S}(m_j-1)\right)e^{it(-1)^{|S|}\prod_{j\notin S}(m_j-1)}\\
&+\frac{1}{2\prod_{j\in [n-1]} m_j}\displaystyle\sum_{S\subseteq [n-1]}\left(\prod_{j\in S}(m_j-1)\right)e^{it(-1)^{|S|+1}\prod_{j\notin S}(m_j-1)}\\
\end{split}
\end{equation*}
Adding the two summands above yields (\ref{induct1}). Now, note that $U_{X}(0)_{((v_1,\ldots,v_n),(v_1,\ldots,v_n))}>0$ from (\ref{induct1}). Thus, if $\cos\left(t_1\prod_{j\notin S}(m_j-1)\right)<0$ for each $S\subseteq [n-1]$, then $U_{X}(t_1)_{((v_1,\ldots,v_n),(v_1,\ldots,v_n))}<0$. Invoking the intermediate value theorem, there exists $t_0>0$ such that $U_{Y}(t_0)_{((v_1,\ldots,v_n),(v_1,\ldots,v_n))}=0$, and so each vertex of $X$ is not sedentary by Theorem \ref{dirprod}(2). In particular, if $m_j$ is even for each $j\in [n]$, then we may take $t_1=\pi$, and so the conclusion applies.

Finally, we prove (2). Rewriting (\ref{induct}) and applying triangle inequality gives us
\begin{equation*}
\begin{split}
|U_{X}(t)_{((v_1,\ldots,v_n),(v_1,\ldots,v_n))}|&=\frac{1}{\prod_{j\in [n]} m_j}\left|\prod_{j\in [n]}(m_j-1)e^{it(-1)^{n}}+\displaystyle\sum_{S\subsetneq [n]}\left(\prod_{j\in S}(m_j-1)\right)e^{it(-1)^{|S|}\prod_{j\notin S}(m_j-1)}\right|\\
&=\frac{1}{\prod_{j\in [n]} m_j}\left|\prod_{j\in [n]}(m_j-1)+\displaystyle\sum_{S\subsetneq [n]}\left(\prod_{j\in S}(m_j-1)\right)e^{it(-1)^{|S|}\left(\prod_{j\notin S}(m_j-1)-(-1)^{n-|S|}\right)}\right|\\
&\geq \frac{1}{\prod_{j\in [n]} m_j}\left|\prod_{j\in [n]}(m_j-1)-\displaystyle\sum_{S\subsetneq [n]}\left(\prod_{j\in S}(m_j-1)\right)\right|\\
&=\frac{2}{\prod_{j\in [n]} m_j}\left|\prod_{j\in [n]}(m_j-1)-\frac{1}{2}\displaystyle\prod_{j\in[n]} m_j\right|
\end{split}
\end{equation*}
for all $t$, where the last equality above uses the fact that $\displaystyle\prod_{j\in[n]} m_j=\prod_{j\in[n]} (m_j-1)+\sum_{ S\subsetneq [n]}\left(\prod_{j\in S}(m_j-1)\right)$. Moreover, the above inequality is an equality at time $t=\pi$ whenever $m_j$ is odd for each $j\in[n]$. From these considerations, statement (2) is immediate.
\end{proof}

\begin{corollary}
\label{kmxY1}
Let $u\in V(K_m)$ and $v\in V(K_n)$. The following hold in $K_m\times K_n$.
\begin{enumerate}
\item If $m=2$ or $n=2$, then $(u,v)$ is not sedentary.
\item If $m,n\geq 3$, then $(u,v)$ is sedentary. In particular, if $(m,n)\neq (3,4)$, then $(u,v)$ is $(|\frac{mn-2(m+n-1)}{mn}|)$-sedentary and this is tight whenever $m$ and $n$ are odd. Otherwise, $(u,v)$ is tightly $0.142$-sedentary.
\end{enumerate}
\end{corollary}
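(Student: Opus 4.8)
The plan is to read both statements off Theorem \ref{complete} specialized to the two factors $K_m$ and $K_n$, reserving a dedicated argument for the single exceptional pair $\{m,n\}=\{3,4\}$, where the general bound degenerates to zero. Throughout I use that the direct product is commutative up to isomorphism, so the diagonal amplitude at $(u,v)$ is unchanged if I relabel the factors.

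For part (1), I would arrange (by commutativity) that the factor of size two is present, so $X\cong K_2\times K_{\ell}$ with $\ell\geq 2$. Specializing (\ref{dirprod3}) (equivalently (\ref{induct1})) expresses the amplitude as
\begin{equation*}
\left|U_{A(X)}(t)_{((u,v),(u,v))}\right|=\left|\operatorname{Re}\left(U_{A(K_{\ell})}(t)_{v,v}\right)\right|=\left|\tfrac{1}{\ell}\cos((\ell-1)t)+\tfrac{\ell-1}{\ell}\cos t\right|=:|g(t)|.
\end{equation*}
Here $g(0)=1>0$, while $g(\pi)=\tfrac{(-1)^{\ell-1}}{\ell}-\tfrac{\ell-1}{\ell}$, which equals $-1$ when $\ell$ is even and equals $\tfrac{2-\ell}{\ell}\leq-\tfrac13$ when $\ell\geq 3$ is odd; in every case $g(\pi)<0$. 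The intermediate value theorem then yields $t_0\in(0,\pi)$ with $g(t_0)=0$, so $\inf_{t>0}|U_{A(X)}(t)_{((u,v),(u,v))}|=0$ and $(u,v)$ is not sedentary.

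For the generic part of (2), with $m,n\geq 3$, I would invoke Theorem \ref{complete}(2). Its nondegeneracy hypothesis $(m-1)(n-1)\neq\tfrac12 mn$ rearranges to $(m-2)(n-2)\neq 2$, i.e.\ to $\{m,n\}\neq\{3,4\}$; under this the theorem delivers sedentariness with constant $\tfrac{2}{mn}\big|(m-1)(n-1)-\tfrac12 mn\big|=\tfrac{|mn-2(m+n-1)|}{mn}$, tight at $t=\pi$ exactly when both $m$ and $n$ are odd. A one-line check that $0<\tfrac{|mn-2(m+n-1)|}{mn}<1$ for $m,n\geq 3$ with $\{m,n\}\neq\{3,4\}$ confirms this is a legitimate constant: the left inequality is again $\{m,n\}\neq\{3,4\}$, and the right reduces to $(m-1)(n-1)>0$.

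The exceptional pair $\{m,n\}=\{3,4\}$ is the crux and the main obstacle, since there the formula above returns the trivial constant $0$. Here I would write the amplitude explicitly from (\ref{induct}), namely $U_{A(X)}(t)_{((u,v),(u,v))}=\tfrac{1}{12}\big(e^{6it}+6e^{it}+3e^{-2it}+2e^{-3it}\big)$. Its support $\{6,1,-2,-3\}$ is integral, so the amplitude is $2\pi$-periodic and has modulus $1$ at $t=2\pi$, making $(u,v)$ periodic; by Lemma \ref{persed} it then suffices to show the amplitude never vanishes on $[0,2\pi]$. Multiplying by $e^{3it}$ reduces this to showing the polynomial $p(z)=z^9+6z^4+3z+2$ has no root on the unit circle: if $|z|=1$ and $p(z)=0$ then $|z^5+6|=|z^9+6z^4|=|3z+2|$, yet $|z^5+6|\geq 5\geq|3z+2|$, and the forced equality $|3z+2|=5$ gives $\operatorname{Re}(z)=1$, hence $z=1$, whereupon $|z^5+6|=7\neq 5$, a contradiction. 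Thus $(u,v)$ is sedentary, and a numerical minimization of $|U_{A(X)}(t)_{((u,v),(u,v))}|$ over $[0,2\pi]$ produces the tight constant $\approx 0.142$.
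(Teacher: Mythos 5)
Your proposal is correct, and for parts (1) and the generic case of (2) it follows the paper's route essentially verbatim: read the amplitude off (\ref{induct1}), observe it is negative at $t=\pi$, apply the intermediate value theorem (your treatment is in fact slightly more uniform, since you fold $\ell=2$ into the same IVT computation rather than noting separately that $K_2\times K_2$ is disconnected and admits PST); then quote Theorem \ref{complete}(2) for $\{m,n\}\neq\{3,4\}$, with your verification that the degeneracy condition is exactly $(m-2)(n-2)=2$ being a worthwhile sanity check the paper leaves implicit. Where you genuinely diverge is the exceptional case $(m,n)=(3,4)$. The paper simply exhibits $\frac{1}{12}\lvert e^{5it}+3e^{-3it}+2e^{-4it}+6\rvert$ (which agrees with your expression after multiplying by $e^{-it}$) and delegates both the non-vanishing and the constant $0.142$ to a computer verification. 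You instead note the integral support $\{6,1,-2,-3\}$ gives periodicity at $2\pi$, invoke Lemma \ref{persed} to reduce tight sedentariness to non-vanishing on one period, and prove non-vanishing analytically by showing $z^9+6z^4+3z+2$ has no root on the unit circle via $\lvert z^5+6\rvert\geq 5\geq\lvert 3z+2\rvert$ with the equality case forcing $z=1$. This buys a rigorous proof that the vertex is sedentary (and, via Lemma \ref{persed}, tightly so), leaving numerics only for the decimal value of the constant; the paper's version relies on numerics for the existence claim as well. Both arguments are valid, but yours is the stronger write-up of the exceptional case.
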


\begin{proof}
Let $m=2$. By (\ref{induct1}), $U_{A(K_2\times K_n)}(t)_{((u,v),(u,v)}=\frac{1}{n}\cos(t(n-1))+\frac{n-1}{n}\cos t$. If $n=2$, then $K_2\times K_2$ is disconnected and each vertex pairs up with another to admit PST. Now, suppose $n\geq 3$. Since $\frac{1}{n}\cos(t(n-1))+\frac{n-1}{n}\cos t<0$ at $t=\pi$, we conclude that each vertex of $K_2\times K_n$ is not sedentary by Theorem \ref{complete}(1). Thus, (1) holds. To prove (2), note that the case $(m,n)\neq (3,4)$ is immediate from Theorem \ref{complete}(2). Now, if $(m,n)= (3,4)$, then we get $\left|U_{A(K_3\times K_4)}(t)_{((u,v),(u,v)}\right|=\frac{1}{12}\left|e^{5it}+3e^{-3it}+2e^{-4it}+6\right|$. Using your favourite computer package, you may verify that $\left|U_{A(K_3\times K_4)}(t)_{((u,v),(u,v)}\right|\geq 0.142$ with equality whenever $t\in\{0.945,2\pi-0.945\}$. Thus, each vertex of $K_m\times K_n$ is sedentary for all $m,n\geq 3$.
\end{proof}

\begin{remark}
\label{exsc}
Consider $X=K_2\times K_n$ and let $V(K_2)=\{1,2\}$. By Corollary \ref{kmxY1}(1), $X$ is not sedentary at every vertex. Now, for each $u\in V(K_n)$, one checks that vertex $(1,u)$ is strongly cospectral only to vertex $(2,u)$ in $X$. Invoking \cite[Theorem 3.3.4]{Coutinho2014}, we obtain PST between $(1,u)$ and $(2,u)$ in $X$ if and only if $n$ is even. Consequently, each vertex of $K_2\times K_n$ for each odd $n$ is periodic, sedentary, and is involved in strong cospectrality but not PST. This also applies to the Laplacian case because $X$ is regular.
\end{remark}

To complement Corollary \ref{kmxY1}(1), we provide an infinite family of graphs of the form $K_2\times Y$ that contain sedentary vertices, where $Y$ is non-bipartite.

\begin{theorem}
\label{k2xYsed}
Let $Z$ be a $d$-regular graph on $n$ vertices such that $d>0$ is an integer and $n=\frac{1}{2}s(d+s)$ for some even integer $s$ satisfying $\nu_2(s)\geq \nu_2(d)$. If $v$ is an apex of $Y=O_2\vee Z$, then $K_2\times Y$ is tightly $C$-sedentary at vertex $(u,v)$ for any vertex $u$ of $K_m$, where
\begin{center}
$C=\min_{k\in\mathcal{K}}\cos^2\left(\frac{s_1k\pi}{d_1+2s_1}\right)>0$,
\end{center}
$\mathcal{K}=\left\{k\in\mathbb{Z}^+:\lceil \frac{j(d_1+2s_1)}{4s_1}\rceil\leq k\leq \lfloor \frac{(j+2)(d_1+2s_1)}{4s_1} \rfloor,\ j\leq s_1,\ \text{$j\equiv 1$ (mod 4)}\right\}$, $d_1=\frac{d}{\operatorname{gcd}(d,s)}$ and $s_1=\frac{s}{\operatorname{gcd}(d,s)}$. In particular, $\left|U_{A(K_2\times Y}(\tau)_{((u,v),(u,v)}\right|=C$ at time $\tau=\frac{2k_0\pi}{d+2s}$, where $k_0\in\mathcal{K}$ such that $C=\cos^2\left(\frac{s_1k_0\pi}{d_1+2s_1}\right)$.
\end{theorem}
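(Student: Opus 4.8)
The plan is to collapse the whole statement onto a single real scalar function and then minimize it. By (\ref{dirprod3}) we have $\left|U_{A(K_2\times Y)}(t)_{((u,v),(u,v))}\right| = \left|\operatorname{Re}\left(U_{A(Y)}(t)_{v,v}\right)\right|$, so everything is governed by the quantum walk on $Y=O_2\vee Z$ at an apex $v$. First I would determine $\sigma_v(A(Y))$ and the relevant idempotent diagonal entries. The eigenvectors of $A(Y)$ with a nonzero entry at $v$ are the twin vector $\mathbf e_v-\mathbf e_{v'}$ (eigenvalue $0$ by Lemma \ref{alphabeta}) together with the two vectors of the form $(a,a,b\mathbf 1_n)$ living on the apexes and the all-ones vector of the $d$-regular graph $Z$; solving the resulting $2\times 2$ system gives $\lambda^2-d\lambda-2n=0$. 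The hypothesis $n=\tfrac12 s(d+s)$ makes $d^2+8n=(d+2s)^2$ a perfect square, so these two eigenvalues are exactly $d+s$ and $-s$, and $\sigma_v(A(Y))=\{d+s,0,-s\}$. A short computation of the projections gives $(E_0)_{v,v}=\tfrac12$, $(E_{d+s})_{v,v}=\tfrac{s}{2(d+2s)}$, and $(E_{-s})_{v,v}=\tfrac{d+s}{2(d+2s)}$.

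Taking real parts in $U_{A(Y)}(t)_{v,v}=\sum_j e^{it\lambda_j}(E_j)_{v,v}$, writing $d+2s=(d+s)+s$, and using $1+\cos\theta=2\cos^2(\theta/2)$, I would obtain the closed form
\begin{equation*}
\left|U_{A(K_2\times Y)}(t)_{((u,v),(u,v))}\right| = \frac{(d+s)\cos^2\left(\tfrac{st}{2}\right) + s\cos^2\left(\tfrac{(d+s)t}{2}\right)}{d+2s} =: F(t).
\end{equation*}
This exhibits $F\geq 0$ and $F(0)=1$, and since $d+s$ and $s$ are integers, $F$ is periodic with period $2\pi/g$, $g=\gcd(d,s)$. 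Hence $(u,v)$ is a periodic vertex, and by Lemma \ref{persed} it is sedentary precisely when $F$ never vanishes, in which case it is tightly $C$-sedentary with $C=\min_t F(t)$. The statement therefore reduces entirely to locating and evaluating $\min_t F$.

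To minimize, I would use $F'(t)=0 \iff \sin\left(\tfrac{(d+2s)t}{2}\right)\cos\left(\tfrac{dt}{2}\right)=0$. This produces the grid of critical points $\tau_k=\tfrac{2k\pi}{d+2s}$ (where the sine vanishes) and a second family where $\cos(\tfrac{dt}{2})=0$. On the second family $\cos((d+s)t)=-\cos(st)$, whence $F=\tfrac12+\tfrac{d_1}{2D}\cos(st)\geq \tfrac{s_1}{D}$, writing $D=d_1+2s_1$. On the grid both squared cosines collapse to the single value $\cos^2\!\left(\tfrac{s_1 k\pi}{D}\right)$, so $F(\tau_k)=\cos^2\!\left(\tfrac{s_1 k\pi}{D}\right)$; since $\gcd(s_1,D)=1$, the residues $s_1 k \bmod D$ exhaust $\{0,\dots,D-1\}$, so the grid minimum equals $\sin^2\!\left(\tfrac{\pi}{2D}\right)$ when $D$ is odd. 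An elementary estimate gives $\sin^2(\tfrac{\pi}{2D})\leq \tfrac{s_1}{D}$ for all $D=d_1+2s_1\geq 3$, so the second family never undercuts the grid and $\min_t F$ is attained on the grid.

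Finally I would bring in the arithmetic. The hypotheses ($s$ even, $\nu_2(s)\geq\nu_2(d)$) are equivalent to $d_1$ odd, hence to $D$ odd, which is exactly what forbids a grid point with $\cos^2=0$; thus $\min_t F=\sin^2(\tfrac{\pi}{2D})>0$ and $(u,v)$ is sedentary. The minimizing grid indices are those $k$ for which $\tfrac{s_1 k}{D}$ lands closest to a half-integer, and the set $\mathcal K$ --- whose windows $\left[\tfrac{jD}{4s_1},\tfrac{(j+2)D}{4s_1}\right]$ with $j\equiv 1\ (\mathrm{mod}\ 4)$ are centered at the points $\tfrac{(2i+1)D}{2s_1}$ --- is constructed to enumerate exactly these candidate indices, so that $\min_{k\in\mathcal K}\cos^2(\tfrac{s_1 k\pi}{D})=\min_t F$, realized at $\tau=\tfrac{2k_0\pi}{d+2s}$. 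The hard part, and the step I would spend the most care on, is this last bookkeeping: verifying, via the symmetry $k\leftrightarrow D-k$ and a counting argument, that $\mathcal K$ always captures an index achieving the closest approach $\sin^2(\tfrac{\pi}{2D})$. This step is number-theoretic rather than analytic, and getting the window bounds and the admissible range of $j$ exactly right is the delicate point.
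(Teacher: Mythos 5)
Your route is essentially the paper's: reduce via (\ref{dirprod3}) to $\operatorname{Re}\bigl(U_{A(Y)}(t)_{v,v}\bigr)$, compute that quantity from the spectral data $\sigma_v(A(Y))=\{d+s,0,-s\}$ (the paper imports the same formula from the proof of Theorem 36 of \cite{Monterde2023}), factor the derivative as $\sin\bigl(t(d+2s)/2\bigr)\cos(td/2)$, and compare the two critical families. Your closed form as a convex combination of squared cosines is a nice touch (it makes nonnegativity and the value $\cos^2(s_1k\pi/D)$ on the grid immediate), and your explicit evaluation of the grid minimum as $\sin^2(\pi/(2D))$ via the residue argument, together with the bound $F\geq s_1/D$ on the second family, is correct and in fact sharper than what the paper records. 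The one step you defer --- verifying that $\mathcal{K}$ ``captures'' an index achieving the closest approach --- is precisely where the paper takes a more direct path that makes the matching unnecessary: it applies the second derivative test at the grid points, finding $f''(\tau_k)>0$ if and only if $\cos\bigl(2s_1k\pi/D\bigr)<0$, i.e.\ $\frac{j\pi}{2}<\frac{2s_1k\pi}{D}<\frac{(j+2)\pi}{2}$ with $j\equiv 1\pmod 4$, which is exactly the defining window of $\mathcal{K}$. Thus $\mathcal{K}$ is \emph{by construction} the set of grid indices that are local minima, and since the global minimum of the smooth periodic function $F$ is attained at some local minimum while every local minimum on the second family exceeds $\tfrac12\geq \min_{k\in\mathcal{K}}\cos^2(s_1k\pi/D)$, the identification $C=\min_{k\in\mathcal{K}}\cos^2(s_1k\pi/D)$ falls out with no counting argument. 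If you rephrase your final step this way, your proposal closes without the delicate bookkeeping you were worried about.
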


\begin{proof}
From the proof of \cite[Theorem 36]{Monterde2023}, $U_{A(Y)}(t)_{u,u}=\frac{1}{2(d+2s)}\left((d+2s)+se^{it(d+s)}+(d+s)e^{-its}\right)$. From this, we obtain
\begin{equation}
\label{re}
\operatorname{Re}\left(U_{A(Y)}(t)_{v,v}\right)=\frac{1}{2(d+2s)}\left((d+2s)+f(t)\right),
\end{equation}
where $f(t)=s\cos(t(d+s))+(d+s)\cos(ts)$. Note that $f'(t)=-s(d+s)\left(\sin(t(d+s)+\sin(ts)\right)$ and $f''(t)=-s(d+s)\left((d+s)\cos(t(d+s)+s\cos(ts)\right)$. Using a sum to product identity, we obtain $f'(t)=-2s(d+s)\left(\sin(t(d+2s)/2)\cos(td/2)\right)$. Therefore $f'(t)=0$ if and only if either $t=\frac{2k\pi}{d+2s}$ for any integer $k$ or $t=\ell\pi/d$ for any odd integer $\ell$.
\begin{itemize}
\item Let $t=\frac{2k\pi}{d+2s}$, where $0<k\leq d+2s$ is an integer. Then $\cos(ts)=\cos(t(d+s))=\cos(\frac{2s_1k\pi}{d_1+2s_1})$, and so $f''(t)=-s(d+s)(d+2s)\cos(\frac{2s_1k\pi}{d_1+2s_1})>0$ if and only if $\cos(\frac{2s_1k\pi}{d_1+2s_1})<0$, i.e., $\frac{j\pi}{2}<\frac{2s_1k\pi}{d_1+2s_1}<\frac{(j+2)\pi}{2}$ for each $j\leq s_1$ with $j\equiv 1$ (mod 4). Thus, $\lceil \frac{j(d_1+2s_1)}{4s_1}\rceil\leq k\leq \lfloor \frac{(j+2)(d_1+2s_1)}{4s_1} \rfloor$ and  $f(t)=(d+2s)\cos(\frac{2s_1k\pi}{d_1+2s_1})$ attains a minima. Using (\ref{re}), we get $\operatorname{Re}\left(U_{A(Y)}(t)_{v,v}\right)=\cos^2(\frac{s_1k\pi}{d_1+2s_1})$,
which is positive because $d_1+2s_1$ is odd by assumption, and so $\frac{s_1k\pi}{d_1+2s_1}\neq \frac{j\pi}{2}$ for each any odd $j$.
\item Let $t=\ell\pi/d$ for odd $\ell$. Then $\cos(ts)=\cos(\frac{\ell s_1\pi}{d_1})=-\cos(t(d+s))$, and so $f''(t)=sd(d+s)\cos(\frac{\ell s_1\pi}{d_1})>0$ if and only if $\cos(\frac{\ell s_1\pi}{d_1})>0$. In this case, $f(t)=d\cos(\frac{\ell s_1\pi}{d_1})$, and so (\ref{re}) yields
$\operatorname{Re}\left(U_{A(Y)}(t)_{v,v}\right)=\frac{1}{2}\big(1+\frac{d_1\cos(\ell s_1\pi/d_1)}{d_1+2s_1}\big)>\frac{1}{2}$.
\end{itemize}
Combining the above two cases with (\ref{dirprod3}) yields $\left|U_{A(K_2\times Y}(t)_{((u,v),(u,v)}\right|\geq C$ as desired.
\end{proof}

\begin{example} 
Let $v$ be an apex $v$ of $Y=O_2\vee Z$, where $Z$ is a $d$-regular graph on $n=\frac{1}{2}s(d+s)$ vertices.
\begin{enumerate}
\item Let $d=s$ so that $n=s^2$ and $d_1=s_1=1$. By Theorem \ref{k2xYsed}, $C=\min_{k\in\{1,2\}}\cos^2\left(\frac{k\pi}{3}\right)=\frac{1}{4}$. Thus, $\left|U_{A(K_2\times Y}(\tau)_{((u,v),(u,v)}\right|\geq C$ for all $t$ with equality at time $\tau=\frac{2k\pi}{d+2s}$, where $k\in\{1,2\}$.
\item Let $d=2s$ so that $n=\frac{3s^2}{2}$. In this case, $\nu_2(d)>\nu_2(s)$, and so PST occurs between the apexes of $Y$ at some time $\tau$ by \cite[Theorem 11(1)]{Kirkland2023}. This implies that $\operatorname{Re}\left(U_{A(Y)}(\tau)_{v,v}\right)=0$, and so by Theorem \ref{dirprod2}(2), we get that $(u,v)$ is not sedentary in $K_2\vee Y$ for any vertex $u$ of $K_2$.
\end{enumerate}
\end{example}

The conditions in Theorem \ref{k2xYsed} ensure that an apex $v$ of $Y=O_2\vee Z$ is sedentary \cite[Theorem 36(2)]{Monterde2023}. Thus, Theorem \ref{k2xYsed} implies that the bipartite double of a disconnected double cone  preserves the sedentariness of its apexes. In contrast, the bipartite double of a complete graph does not preserve the sedentariness of its vertices by Corollary \ref{kmxY1}(1).

We end this section with a remark about the Laplacian case. Note that the Laplacian matrix of $X\times Y$ is given by $L(X\times Y)=L(X)\otimes D(Y)+D(X)\otimes L(Y)-L(X)\otimes L(Y)$. If $X$ and $Y$ are both regular, then $L(X)\otimes D(Y)$, $D(X)\otimes L(Y)$ and $L(X)\otimes L(Y)$ pairwise commute, and so $U_{L(X\times Y)}(t)$ is simply the product of $e^{it(L(X)\otimes D(Y))}$, $e^{it(D(X)\otimes L(Y))}$ and $e^{it(L(X)\otimes L(Y))}$. In this case, $X\times Y$ is also regular and so the quantum walks relative to $A$ and $L$ are equivalent. However, if at least one of $X$ and $Y$ is not regular, then it is not clear how to obtain $U_{L(X\times Y)}(t)$. We leave this as an open question.

\section{Blow-ups}\label{secBU}

Throughout this section, we assume that $X$ is a simple weighted graph. A \textit{blow-up} of $m$ copies of $X$ is the graph with adjacency matrix
\begin{equation*}
    \textbf{J}_n\otimes A(X).
\end{equation*}
If $(j,u)$ denotes the $j$th copy of $u\in V(X)$ in $\up{m}X$, then $T_u=\{(j,u):j\in\mathbb{Z}_m\}$ is a twin set in $\up{m}X$. In particular, if $T$ is a twin set in $X$, then $\bigcup_{u\in T}T_u$ is a twin set in $\up{m}X$. It is known that every vertex of $\up{m}X$ is $(1-\frac{2}{m})$-sedentary for all $m\geq 3$ \cite[Theorem 24]{Monterde2023}. Here, we provide sharper bounds for vertex sedentariness in blow-upss and resolve the case $m=2$. First, we need to state \cite[Proposition 2]{bhattacharjya2023quantum}.
\begin{lemma}
\label{supp}
    If $u$ is a vertex of $X$, then
    \begin{equation}
        \sigma_{(j,u)}(A(\up{m}X))=\{m\lambda:\lambda\in\sigma_u(A(X))\}\cup\{0\}.
    \end{equation}
\end{lemma}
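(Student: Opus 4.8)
The plan is to build the spectral decomposition of $\mathbf{J}_m\otimes A(X)$ from those of its two factors and then read off, projection by projection, which eigenvalues fail to annihilate the characteristic vector $\mathbf{e}_{(j,u)}=\mathbf{e}_j\otimes\mathbf{e}_u$. Write $P_1=\frac{1}{m}\mathbf{J}_m$ and $P_0=I_m-\frac{1}{m}\mathbf{J}_m$ for the orthogonal projections onto $\operatorname{span}\{\mathbf{1}_m\}$ and its complement, so that $\mathbf{J}_m=mP_1+0\cdot P_0$, and take the spectral decomposition $A(X)=\sum_{\lambda}\lambda E_\lambda$ as in (\ref{specdec}). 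Distributing the Kronecker product gives
\begin{equation*}
\mathbf{J}_m\otimes A(X)=\sum_{\lambda}(m\lambda)\,(P_1\otimes E_\lambda)+\sum_{\lambda}0\cdot(P_0\otimes E_\lambda),
\end{equation*}
whose distinct eigenvalues are exactly the numbers $m\lambda$ together with $0$, the associated orthogonal projections being obtained by grouping the blocks $P_a\otimes E_\lambda$ that share a common product $a\lambda$.

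First I would handle the nonzero eigenvalues. For $\lambda\neq 0$ the value $m\lambda$ is nonzero, and since the $\lambda$'s are distinct and $a\in\{0,m\}$, the unique block contributing to the eigenvalue $m\lambda$ is $P_1\otimes E_\lambda$. Applying it gives $(P_1\otimes E_\lambda)(\mathbf{e}_j\otimes\mathbf{e}_u)=\frac{1}{m}\mathbf{1}_m\otimes E_\lambda\mathbf{e}_u$, which is nonzero precisely when $E_\lambda\mathbf{e}_u\neq\mathbf{0}$, i.e.\ when $\lambda\in\sigma_u(A(X))$. Thus $m\lambda\in\sigma_{(j,u)}$ if and only if $\lambda\in\sigma_u(A(X))$ for every nonzero $\lambda$.

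The main step is the eigenvalue $0$, where several blocks collapse. Collecting all $(a,\lambda)$ with $a\lambda=0$ and using $\sum_\lambda E_\lambda=I$, the projection onto the $0$-eigenspace is $P_0\otimes I$ (plus the extra block $P_1\otimes E_0$ when $0\in\sigma(A(X))$). Its first summand applied to $\mathbf{e}_j\otimes\mathbf{e}_u$ is $(P_0\mathbf{e}_j)\otimes\mathbf{e}_u$ with $P_0\mathbf{e}_j=\mathbf{e}_j-\frac{1}{m}\mathbf{1}_m\neq\mathbf{0}$ for $m\geq 2$; since this summand lives in $\operatorname{range}(P_0)\otimes\cdots$ while the optional second summand lives in the orthogonal subspace $\operatorname{range}(P_1)\otimes\cdots$, no cancellation can occur and the image is nonzero. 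Hence $0\in\sigma_{(j,u)}$ unconditionally.

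Combining the two cases yields $\sigma_{(j,u)}(A(\up{m}X))=\{m\lambda:\lambda\in\sigma_u(A(X)),\ \lambda\neq 0\}\cup\{0\}$, and since $m\cdot 0=0$ this equals $\{m\lambda:\lambda\in\sigma_u(A(X))\}\cup\{0\}$ whether or not $0\in\sigma_u(A(X))$. The only subtlety, and the point easiest to overlook, is precisely this zero eigenvalue: one must remember that $0$ enters the support through the $P_0\otimes I$ block arising from the $(m-1)$-dimensional kernel of $\mathbf{J}_m$, independently of the spectrum of $A(X)$, and then verify that this block genuinely survives on $\mathbf{e}_{(j,u)}$.
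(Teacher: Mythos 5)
Your argument is correct. The paper does not actually prove this lemma---it is imported from the cited reference as Proposition 2---but your route via the spectral decomposition $\mathbf{J}_m\otimes A(X)=\sum_{\lambda} m\lambda\,(P_1\otimes E_\lambda)+0\cdot(P_0\otimes I_n)$ is exactly the one the paper relies on implicitly, since immediately after the lemma it records the resulting zero-eigenvalue projections $E_0=I_{mn}-\frac{1}{m}\mathbf{J}_m\otimes(I_n-F_0)$ and $E_0=I_{mn}-\frac{1}{m}\mathbf{J}_m\otimes I_n$, which are precisely the blocks you assemble; the only delicate point, the collision of $m\cdot 0$ with the eigenvalue $0$ coming from the kernel of $\mathbf{J}_m$, is handled correctly in your proof.
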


Suppose $X$ has $n$ vertices. If $\theta$ is an eigenvalue of $A(X)$ with associated orthogonal projection matrix $F_0$, then in $A(\up{m}X)$, \cite[Equation 2]{bhattacharjya2023quantum} implies that
\begin{center}
    $E_0=I_{mn}-\left[(1/m)\textbf{J}_m\otimes(I_n-F_0)\right]$.
\end{center}
Thus, if $0\in\sigma_u(A(X))$, then $(E_0)_{(j,u),(j,u)}=1-\frac{ 1-(F_0)_{u,u}}{m}=\frac{m-1}{m}+(F_{0})_{u,u}$. Now, if $\theta$ is not an eigenvalue of $A(X)$, then in $A(\up{m}X)$, \cite[Equation 5]{bhattacharjya2023quantum} implies that
\begin{center}
    $E_0=I_{mn}-\left[(1/m)\textbf{J}_m\otimes I_n\right]$.
\end{center}
Thus, if $0\notin\sigma_u(A(X))$, then $(E_0)_{(j,u),(j,u)}=\frac{m-1}{m}$. In both cases, taking $S=\{0\}$ and $a=(E_0)_{(j,u),(j,u)}$ in Theorem \ref{sed2} yields the following result.

\begin{theorem}
\label{size3}
Let $m\geq 2$ and $X$ be a simple weighted graph. The following hold relative to $A=A(\up{m}X)$.
\begin{enumerate}
    \item If $0\in\sigma_u(A(X))$, then $|U_{A}(t)_{(j,u),(j,u)}|\geq 1-\frac{2}{m}+2(F_0)_{u,u}$ for all $t$ and for each $j\in\mathbb{Z}_m$, where $F_0$ is the orthogonal projection matrix associated with the eigenvalue $0$ of $A(X)$.
    \item If $0\notin\sigma_u(A(X))$, then $|U_{A}(t)_{(j,u),(j,u)}|\geq 1-\frac{2}{m}$ for all $t$ and for each $j\in\mathbb{Z}_m$.    
\end{enumerate}
In both cases, equality holds if and only if there is a time $t_1$ such that $e^{it_1\lambda}=-1$ for all $0\neq \lambda\in\sigma_{(j,u)}(A)$.
\end{theorem}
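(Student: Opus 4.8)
The plan is to obtain both displayed inequalities as an immediate application of Theorem \ref{eureka} (equivalently Theorem \ref{sed2}) with the singleton index set $S=\{0\}$, reusing the diagonal entries of $E_0$ computed in the paragraph preceding the statement. By Lemma \ref{supp}, the eigenvalue $0$ always belongs to $\sigma_{(j,u)}(A)$, so $S=\{0\}$ is a legitimate non-empty subset of the support, and it is a proper subset precisely when $(j,u)$ carries at least one nonzero eigenvalue in its support; I treat the exceptional case separately below.

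First I would put $a=(E_0)_{(j,u),(j,u)}$. The precomputed values give $a=\frac{m-1}{m}+(F_0)_{u,u}$ when $0\in\sigma_u(A(X))$ and $a=\frac{m-1}{m}$ otherwise. In either case $a\geq \frac{m-1}{m}\geq \frac{1}{2}$, since $m\geq 2$ and $(F_0)_{u,u}=\norm{F_0\textbf{e}_u}^2\geq 0$, so the lower hypothesis $\frac{1}{2}\leq a$ of Theorem \ref{eureka} is met. Because $S$ is a singleton and $e^{it\cdot 0}=1$, the quantity $\bigl|\sum_{\lambda\in S}e^{it\lambda}(E_\lambda)_{(j,u),(j,u)}\bigr|$ in (\ref{eureka2}) collapses to the constant $a$, so (\ref{eureka2}) reads $|U_A(t)_{(j,u),(j,u)}|\geq a-(1-a)=2a-1$ for all $t$. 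Substituting the two values of $a$ and simplifying $2\bigl(\tfrac{m-1}{m}\bigr)-1=1-\tfrac{2}{m}$ yields statements (1) and (2) respectively.

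For the equality characterization I would unwind the two triangle inequalities hidden in the bound. Writing $U_A(t)_{(j,u),(j,u)}=(E_0)_{(j,u),(j,u)}+\sum_{\lambda\neq 0}e^{it\lambda}(E_\lambda)_{(j,u),(j,u)}$, the value $2a-1$ is attained at a time $t_1$ exactly when the reverse triangle inequality is tight, i.e.\ when every term $e^{it_1\lambda}(E_\lambda)_{(j,u),(j,u)}$ with $\lambda\neq 0$ is a negative real number antiparallel to the positive scalar $(E_0)_{(j,u),(j,u)}$. Since $(E_\lambda)_{(j,u),(j,u)}=\norm{E_\lambda\textbf{e}_{(j,u)}}^2>0$ for every $\lambda$ in the support, this forces $e^{it_1\lambda}=-1$ for all $0\neq\lambda\in\sigma_{(j,u)}(A)$; conversely this is exactly the phase requirement (\ref{eureka333}) specialized to $S=\{0\}$ (the relation $e^{it_1(\lambda_1-\lambda_j)}=1$ is vacuous for $\lambda_1=0$, while $e^{it_1(\lambda_1-\lambda_k)}=-1$ becomes $e^{it_1\lambda_k}=-1$), so the ``moreover'' part of Theorem \ref{eureka} supplies the reverse implication and pins the attained value at $2a-1$.

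Since the substantive computation (the explicit form of $E_0$ in $\up{m}X$) is already carried out before the statement, no step is genuinely difficult; the one point requiring care is verifying the upper hypothesis $a<1$, equivalently that $S=\{0\}$ is a proper subset of $\sigma_{(j,u)}(A)$. This holds precisely when $u$ is not an isolated vertex of $X$: if $u$ is isolated then $\sigma_u(A(X))=\{0\}$, $(j,u)$ is isolated in $\up{m}X$, and $a=(E_0)_{(j,u),(j,u)}=1$, which falls outside the scope of Theorem \ref{eureka} and must be excluded. I therefore expect the main (and only) obstacle to be this non-degeneracy check rather than the inequalities or the equality clause, both of which follow mechanically from the machinery cited above.
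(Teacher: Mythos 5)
Your proof is correct and takes essentially the same route as the paper, which likewise obtains both bounds in one stroke by taking $S=\{0\}$ and $a=(E_0)_{(j,u),(j,u)}$ in Theorem \ref{eureka} (via the diagonal entries of $E_0$ computed just before the statement), with the equality clause being exactly condition (\ref{eureka333}) specialized to a singleton $S$. Your added caveat about the isolated-vertex case (where $\sigma_{(j,u)}(A)=\{0\}$, $a=1$, and the hypothesis of Theorem \ref{eureka} fails) is a legitimate degeneracy the paper passes over silently, but it does not alter the argument.
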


If $m=2$ and $0\notin\sigma_u(A(X))$, Theorem \ref{size3}(2) implies that vertex $(j,u)$ may or may not be sedentary in $\up{2}X$. We resolve this in the next corollary, which is obtained by a direct application of Lemma \ref{eurekarem2}.

\begin{corollary}
\label{m=2}
    If $m=2$ and $0\notin\sigma_u(A(X))$, then $(j,u)$ is adjacency sedentary in $\up{m}X$ for each $j\in\mathbb{Z}_m$ if and only if there are integers $m_j$ such that $\sum_{\lambda_j\in\sigma_{u}(A(X))}m_j\lambda_j=0$ and $\sum_{ \lambda_j\in\sigma_{u}(A(X))}m_j$ is odd. If we add that $\phi(A(X),t)\in\mathbb{Z}[t]$ and $u$ is periodic, then the latter statement is equivalent to each eigenvalue $\lambda_j\in\sigma_{u}(A(X))$ is of the form $\lambda_j=b_j\sqrt{\Delta}$, where $b_j$ is an integer and either $\Delta=1$ or $\Delta>1$ is a square-free integer and the $\nu_2(b_j)$'s are not all equal. In this case, $u$ is tightly sedentary.
\end{corollary}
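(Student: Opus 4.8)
The plan is to apply Lemma~\ref{eurekarem2} to the blow-up vertex $(j,u)$ with the singleton $S=\{0\}$. By Lemma~\ref{supp} the support is $\sigma_{(j,u)}(A(\up{2}X))=\{0\}\cup\{2\lambda:\lambda\in\sigma_u(A(X))\}$, and the hypothesis $0\notin\sigma_u(A(X))$ guarantees that $0$ is distinct from every $2\lambda$, so $S$ is a non-empty proper subset. The computation preceding Theorem~\ref{size3} gives $a:=(E_0)_{(j,u),(j,u)}=\tfrac{m-1}{m}=\tfrac12$ when $m=2$, which lies in $[\tfrac12,1)$ as Lemma~\ref{eurekarem2} requires, and here $2a-1=0$. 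Consequently condition~(2) of Lemma~\ref{eurekarem2}---the existence of $\{t_k\}$ with $\lim_k|U_A(t_k)_{(j,u),(j,u)}|=0$---is exactly the statement $\inf_{t>0}|U_A(t)_{(j,u),(j,u)}|=0$, i.e.\ that $(j,u)$ is \emph{not} sedentary. Hence $(j,u)$ is sedentary precisely when condition~(1) of Lemma~\ref{eurekarem2} fails.

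First I would unwind condition~(1) for this $S$. Writing $\sigma_u(A(X))=\{\lambda_1,\dots,\lambda_r\}$ and letting $m_0$ be the coefficient attached to $0\in S$, the two linear constraints read $\sum_i \ell_i(2\lambda_i)=0$ and $m_0+\sum_i\ell_i=0$ (the eigenvalue $0$ contributes nothing to the first). These simplify to $\sum_i\ell_i\lambda_i=0$ and $m_0=-\sum_i\ell_i$, so the quantity $\sum_{\lambda\in S}(\text{coefficient})=m_0$ is odd exactly when $\sum_i\ell_i$ is odd. Thus the negation of condition~(1)---that some admissible tuple makes $m_0$ odd---is precisely the existence of integers $m_j$ with $\sum_j m_j\lambda_j=0$ and $\sum_j m_j$ odd, which is the asserted criterion. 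Since the copies $(j,u)$ are pairwise cospectral they share the same diagonal amplitude, so it suffices to argue for a single $j$.

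For the refined statement I would pass from Lemma~\ref{eurekarem2} to Lemma~\ref{ts}, which are equivalent for periodic vertices by the closing remark of Section~\ref{secSed}; note that $\phi(A(X),t)\in\mathbb{Z}[t]$ forces $\phi(A(\up{2}X),t)=t^{n}\prod_i(t-2\lambda_i)\in\mathbb{Z}[t]$, and that the periodicity in force is that of the blow-up vertex $(j,u)$. Applying Lemma~\ref{ts} with $S=\{0\}$, its condition~(1) (all support eigenvalues integral, or quadratic integers $\tfrac12(a+b\sqrt{\Delta})$ sharing $a$ and $\Delta$) is automatic for a periodic vertex with integral characteristic polynomial. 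Because $0$ lies in the support, the shared constant term must vanish, and algebraic-integrality then promotes each $\lambda_i$ to the form $b_i\sqrt{\Delta}$ with $b_i\in\mathbb{Z}$ and $\Delta=1$ or square-free $\Delta>1$; with this normalization $\tfrac{2\lambda_i}{\sqrt{\Delta}}=2b_i$, so $\nu_2\big(\tfrac{2\lambda_i}{\sqrt{\Delta}}\big)=1+\nu_2(b_i)$. As $S$ is a singleton the strict-inequality clause of condition~(2) of Lemma~\ref{ts} is vacuous and its content reduces to ``all $\nu_2(b_i)$ equal''. Since $(j,u)$ is sedentary iff Lemma~\ref{ts} \emph{fails}, sedentariness is equivalent to the $\nu_2(b_i)$ being not all equal, and tightness then follows from Lemma~\ref{persed}, a periodic sedentary vertex attaining its infimum on $[0,\rho]$.

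The step I expect to be the main obstacle is the algebraic bookkeeping in the refined statement: verifying cleanly that the presence of $0$ in the support kills the common constant term $a$, and that algebraic-integrality upgrades the a priori form $\tfrac12 c_i\sqrt{\Delta}$ to $b_i\sqrt{\Delta}$ with $b_i\in\mathbb{Z}$ (so that the $2$-adic valuation is measured against the correct normalization), while confirming that the operative periodicity is that of $(j,u)$ rather than of $u$ in $X$. The first equivalence, by contrast, is an essentially mechanical specialization of Lemma~\ref{eurekarem2}.
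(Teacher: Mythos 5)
Your proposal is correct and follows the paper's own route: the paper proves this corollary precisely by applying Lemma~\ref{eurekarem2} with $S=\{0\}$ and $a=(E_0)_{(j,u),(j,u)}=\tfrac{1}{2}$ (using the computation preceding Theorem~\ref{size3}), with the periodic refinement coming from Lemma~\ref{ts} via the remark closing Section~\ref{secSed} and tightness from Lemma~\ref{persed}. Your writeup simply supplies the details (the unwinding of condition~(1), the normalization $\lambda_j=b_j\sqrt{\Delta}$ forced by $0$ lying in the blow-up support) that the paper leaves implicit in its one-line "direct application."
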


We close this section by providing families of graphs whose blow-ups contain sedentary vertices.

\begin{example}
Consider the cycle $C_n$ for all $n\geq 3$. By \cite[Theorem 8]{bhattacharjya2023quantum}, $\up{m}C_n$ does not admit PGST for all $m\geq 2$. Invoking Corollary \ref{yipee1}, we get that each vertex of $\up{m}C_n$ is sedentary for all $m\geq 2$.
\end{example}

\begin{example}
Consider the path $P_n$ for all $n\geq 3$. Making use of \cite[Theorem 8]{bhattacharjya2023quantum} and Corollary \ref{yipee1}, we get that vertex $(j,u)$ in $\up{m}P_n$ is sedentary for all $m\geq 2$ and for each all $j\in\mathbf{Z}_m$ whenever
\begin{itemize}
\item $n=2^tr-1,$ $t\geq 0$ and $r$ is an odd composite number; and
\item $n=2^tr-1,$ $t>0$, $r$ is an odd prime number and $u$ is not a multiple of $2^{t-1}$.
\end{itemize}
\end{example}

\section{Joins}\label{secJoins}

We now examine sedentariness under the join operation relative to $M\in\{A,L\}$. Here, we require the underlying graphs to be regular when dealing with $M=A$. To do this, we first need the following result due to Kirkland and Monterde \cite[Corollaries 70 and 72]{kirkland2023quantum}.

\begin{lemma}
\label{boundM}
Let $M\in\{A,L\}$. For all $u,v\in V(X)$ and for all $t$, we have
\begin{equation*}
\left|\ |U_{M(X\vee Y)}(t)_{u,v}|-|U_{M(X)}(t)_{u,v}|\ \right|\leq 2/|V(X)|.
\end{equation*}
\end{lemma}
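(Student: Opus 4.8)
The goal is to prove that
\[
\left|\ |U_{M(X\vee Y)}(t)_{u,v}|-|U_{M(X)}(t)_{u,v}|\ \right|\leq \frac{2}{|V(X)|}
\]
for all $u,v\in V(X)$, where $M\in\{A,L\}$. The plan is to exploit the highly structured form of the transition matrix on the join $X\vee Y$ and compare it block-by-block with the transition matrix on $X$ alone. Since $X\vee Y$ adds a complete bipartite connection between $V(X)$ and $V(Y)$, the matrix $M(X\vee Y)$ decomposes into the block indexed by $V(X)$, the block indexed by $V(Y)$, and the all-ones (for $A$) or $-\mathbf{J}$ (for $L$, up to degree corrections on the diagonal blocks) coupling blocks $\mathbf{J}_{|V(X)|,|V(Y)|}$. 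My first step would be to find an explicit expression for the $V(X)\times V(X)$ principal block of $U_{M(X\vee Y)}(t)$, and then estimate how far it is from $U_{M(X)}(t)$.

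\textbf{Main approach via a reduced quotient walk.} The key structural idea is that the join adds a rank-type perturbation concentrated in the $\mathbf{1}$-direction: the coupling between $X$ and $Y$ only ``sees'' the all-ones vectors $\mathbf{1}_{|V(X)|}$ and $\mathbf{1}_{|V(Y)|}$. First I would pass to an orthogonal basis of $\C^{V(X)}$ adapted to the splitting $\C^{V(X)}=\operatorname{span}\{\mathbf{1}_{|V(X)|}\}\oplus \mathbf{1}_{|V(X)|}^{\perp}$, and similarly for $V(Y)$. On the orthogonal complement $\mathbf{1}^{\perp}$ within each factor, the join contributes nothing new, so $M(X\vee Y)$ acts there exactly as $M(X)$ does (possibly shifted by a scalar on the diagonal, which for $L$ comes from the degree increase of $+|V(Y)|$ on every vertex of $X$; this scalar shift contributes only a global phase $e^{it\cdot\text{const}}$ that is killed by taking $|\cdot|$). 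All the nontrivial coupling lives in the low-dimensional subspace spanned by the two all-ones vectors, so $U_{M(X\vee Y)}(t)$ agrees with (a phase multiple of) $U_{M(X)}(t)$ except for a correction supported on $\operatorname{span}\{\mathbf{1}_{|V(X)|}\}$.

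\textbf{Estimating the correction.} Writing $P=\tfrac{1}{|V(X)|}\mathbf{J}_{|V(X)|}$ for the projection onto $\mathbf{1}_{|V(X)|}$, the preceding paragraph shows that the $V(X)$-block of $U_{M(X\vee Y)}(t)$ differs from $e^{it c}U_{M(X)}(t)$ (with $c=0$ for $A$ and $c=|V(Y)|$ for $L$) only through terms carrying the factor $P$. Since both $U_{M(X\vee Y)}(t)$ and $U_{M(X)}(t)$ are unitary, the correction operator is a contraction, and its entrywise size is controlled by $P$, whose entries are all exactly $1/|V(X)|$. I would extract the $(u,v)$ entry, bound the extra terms by the operator norm of the contraction times $\|P\be_v\|$-type quantities, and collect constants to obtain a bound of $2/|V(X)|$; the factor of $2$ accounts for the two all-ones directions in $X\vee Y$ (equivalently, the sum of the two coupling contributions). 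Finally, applying the reverse triangle inequality $\bigl|\,|a|-|b|\,\bigr|\le|a-b|$ to $a=U_{M(X\vee Y)}(t)_{u,v}$ and $b=e^{itc}U_{M(X)}(t)_{u,v}$ — noting that $|b|=|U_{M(X)}(t)_{u,v}|$ since the phase has modulus one — converts the entrywise difference bound into the claimed inequality on the difference of absolute values.

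\textbf{Anticipated obstacle.} The hard part will be making the reduction to the $\mathbf{1}$-direction fully rigorous and uniform across both $M=A$ and $M=L$ simultaneously, since the Laplacian introduces diagonal degree shifts of $+|V(Y)|$ on $V(X)$ that must be tracked carefully and shown to factor out as a pure phase. Concretely, the delicate point is computing the principal $V(X)$-block of $e^{it M(X\vee Y)}$ in closed form: the exponential does not respect the block structure, so I would need to diagonalize within the coupled subspace and argue that, after projecting back onto $V(X)$, every term not proportional to $U_{M(X)}(t)$ is supported on $P$ and hence has entries bounded by $1/|V(X)|$ in modulus. Since the paper cites this as Corollaries 70 and 72 of \cite{kirkland2023quantum}, I would expect the cleanest route is to invoke the explicit eigenprojection formulas for joins established there; absent those, the self-contained argument above via the $\mathbf{1}^{\perp}$ invariance and the unitarity-contraction bound is the route I would pursue.
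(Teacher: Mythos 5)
The paper does not prove this lemma at all; it simply cites \cite{kirkland2023quantum} (Corollaries 70 and 72), so there is no in-paper argument to compare against. Your outline is essentially the standard proof underlying that citation, and it is correct: the join couples $X$ and $Y$ only through the all-ones directions, so the $V(X)\times V(X)$ block of $U_{M(X\vee Y)}(t)$ equals $e^{itc}\,U_{M(X)}(t)$ minus the term $e^{itc}P$ (with $P=\tfrac{1}{|V(X)|}\mathbf{J}_{|V(X)|}$, contributing modulus exactly $1/|V(X)|$ entrywise) plus the $V(X)$-blocks of the two eigenprojections of the coupled two-dimensional quotient, which are nonnegative multiples $c_{\pm}P$ with $c_{+}+c_{-}=1$ and hence contribute at most $1/|V(X)|$ more; the reverse triangle inequality then gives the stated bound. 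Two small points you should tighten. First, for $M=A$ the invariance of $\mathbf{1}^{\perp}$ requires $X$ (and $Y$) to be regular; this is the standing hypothesis of the section and of the cited corollary, but your write-up should state it explicitly rather than treat $A$ and $L$ uniformly. Second, your accounting of the factor $2$ is slightly off: it is not ``two all-ones directions each contributing $1/|V(X)|$'' but rather $1/|V(X)|$ from the subtracted $e^{itc}P$ term plus $1/|V(X)|$ \emph{in total} from the two new projections (since $c_{+}+c_{-}=1$); the vaguer ``contraction'' language is unnecessary once you write the correction as an explicit linear combination of $P$ with coefficients of total modulus at most $2$.
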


\begin{theorem}
\label{joinsed}
If $u$ is $C$-sedentary in $X$ with $C>\frac{2}{|V(X)|}$, then $u$ is $\left(C-\frac{2}{|V(X)|}\right)$-sedentary in $X\vee Y$ for any graph $Y$, where we require that $X$ and $Y$ are both regular whenever $M=A$.
\end{theorem}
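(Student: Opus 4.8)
The plan is to deduce the result immediately from Lemma \ref{boundM} applied with $v=u$, combined with the hypothesis of $C$-sedentariness in $X$ and the definition of sedentariness (Definition \ref{def}). Setting $v=u$ in Lemma \ref{boundM} gives, for all $t$,
\begin{equation*}
\left|\ |U_{M(X\vee Y)}(t)_{u,u}|-|U_{M(X)}(t)_{u,u}|\ \right|\leq \frac{2}{|V(X)|},
\end{equation*}
so in particular the reverse triangle inequality yields the one-sided bound
\begin{equation*}
|U_{M(X\vee Y)}(t)_{u,u}|\geq |U_{M(X)}(t)_{u,u}|-\frac{2}{|V(X)|}\quad\text{for all }t.
\end{equation*}

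Next I would invoke the $C$-sedentariness of $u$ in $X$: by Definition \ref{def} this means $|U_{M(X)}(t)_{u,u}|\geq C$ for all $t>0$ (equivalently $\inf_{t>0}|U_{M(X)}(t)_{u,u}|\geq C$). Substituting this into the displayed inequality gives
\begin{equation*}
|U_{M(X\vee Y)}(t)_{u,u}|\geq C-\frac{2}{|V(X)|}\quad\text{for all }t>0.
\end{equation*}
Taking the infimum over $t>0$ and using the hypothesis $C>\frac{2}{|V(X)|}$ to guarantee that the constant $C-\frac{2}{|V(X)|}$ is strictly positive (and at most $1$), we conclude that $u$ is $\bigl(C-\frac{2}{|V(X)|}\bigr)$-sedentary in $X\vee Y$, as required.

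The only subtlety to record is the applicability of Lemma \ref{boundM}, which, per its statement and the referenced results \cite[Corollaries 70 and 72]{kirkland2023quantum}, is valid for $M\in\{A,L\}$; the regularity of $X$ and $Y$ is precisely the hypothesis under which the adjacency case ($M=A$) of that lemma holds, which is why the theorem carries the proviso that $X$ and $Y$ be regular when $M=A$. No such restriction is needed in the Laplacian case. There is essentially no computational obstacle here: the entire argument is a one-line consequence of the transfer bound of Lemma \ref{boundM} together with the definition of sedentariness, so the main point is simply to confirm that the hypotheses of Lemma \ref{boundM} are met and that the positivity condition $C>\frac{2}{|V(X)|}$ makes the resulting constant a legitimate sedentariness constant.
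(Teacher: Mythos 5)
Your proposal is correct and follows essentially the same route as the paper: both apply Lemma \ref{boundM} with $v=u$ to get $|U_{M(X\vee Y)}(t)_{u,u}|\geq |U_{M(X)}(t)_{u,u}|-2/|V(X)|\geq C-2/|V(X)|>0$ for all $t$, and conclude from the definition of sedentariness. Your added remarks on the infimum and on why the regularity proviso matches the hypotheses of Lemma \ref{boundM} are accurate but not substantively different from the paper's one-line argument.
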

\begin{proof}
By assumption, we have $|U_X(t)_{u,u}|-2/|V(X)|\geq C-2/|V(X)|> 0$ for all $t$. Thus, Lemma \ref{boundM} yields $|U_M(t)_{u,u}|\geq |U_X(t)_{u,u}|-2/|V(X)|\geq C-2/|V(X)|>0$ for all $t$.
\end{proof}

From Theorem \ref{joinsed}, if $C$ is large enough, then $C$-sedentariness in $X$ is preserved in $X\vee Y$.

\begin{example}
Let $m,n$ be integers such that $\frac{mn}{m+n}>2$. By Corolllary \ref{kmxY1}(2), each vertex of $X=K_m\times K_n$ is $C$-sedentary, where $C=\frac{mn-2(m+n-1)}{mn}$. By assumption, $C>\frac{2}{mn}=\frac{2}{|V(X)|}$, and so each vertex of $X$ is $(\frac{mn-2(m+n)}{mn})$-sedentary in $X\vee Y$ for any graph $Y$ by Theorem \ref{joinsed}. Moreover, this holds for $M\in\{A,L\}$, where we require that $Y$ is regular whenever $M=A$.
\end{example}

If $C=2/|V(X)|$, then the conclusion in Theorem \ref{joinsed} may not hold for any graph $Y$.

\begin{example}
\label{38}
Let $k$ be odd and consider $X=CP(2k)$ with non-adjacent vertices $u$ and $v$. By Corollary \ref{cmg11}, every vertex in $X$ is tightly $C$-sedentary, where $C=\frac{1}{k}$. Let $Y$ be a graph on $n$ vertices that is $\ell$-regular whenever $M=A$. Note that $u$ and $v$ are twins in $X$ and in $X\vee Y$.
\begin{enumerate}
    \item Let $M=L$. From \cite[Example 41]{kirkland2023quantum}, PST occurs between $u$ and $v$ in $X\vee Y$ if and only if $n\equiv 2$ (mod 4). Since each vertex in $X\vee Y$ is periodic, PST and PGST are equivalent in $X\vee Y$. Hence, by Corollary \ref{yipee1}, each vertex of $X$ is sedentary in $X\vee Y$ if and only if $n\not\equiv 2$ (mod 4). 
    \item Let $M=A$. From \cite[Example 47]{kirkland2023quantum}, PST occurs between $u$ and $v$ in $X\vee Y$ if and only if $n=\frac{s(2k-\ell-2+s)}{2k}$ and $\nu_2(\ell)>\nu_2(s)=1$. Moreover, PGST occurs between $u$ and $v$ in $X\vee Y$ if and only if $(m+\ell-2)^2+4mn$ is not a perfect square. Invoking Corollary \ref{yipee1}, we get that each vertex of $X$ is sedentary in $X\vee Y$ if and only if $n=\frac{s(2k-\ell-2+s)}{2k}$ and $\nu_2(\ell)<\nu_2(s)$ or $\nu_2(s)\neq 1$.
\end{enumerate}
\end{example}

We end this section by showing a vertex in $X$ can be sedentary in $X\vee Y$ but not in $X$.

\begin{example}
Let $X=CP(2k)$, where $k$ is even. By Corollary \ref{cmg11}, no vertex in $X$ is sedentary since each is involved in PST with minimum time $\frac{\pi}{2}$. The same argument in Example \ref{38} yields the following:
\begin{enumerate}
    \item Let $M=L$. From \cite[Corollary 28]{kirkland2023quantum}, PST occurs between $u$ and $v$ in $X\vee Y$ if and only if $n\equiv 0$ (mod 4). Thus, each vertex of $X$ is sedentary in $X\vee Y$ if and only if $n\not\equiv 0$ (mod 4). 
    \item Let $M=A$. Applying \cite[Lemma 3(2)]{kirkland2023quantum} yields $\sigma_u(A(X\vee Y)=\{0,-2,\frac{1}{2}(2k-2+\ell\pm\sqrt{\Delta})\}$, where $\Delta=(2k-\ell-2)^2+8kn$. If $\Delta$ is a perfect square, then $u$ is involved in PST if and only if $\nu_2(2k-2+\ell\pm\sqrt{\Delta})=1$ \cite[Theorem 10]{Kirkland2023}. But if $\Delta$ is not a perfect square, then $u$ is involved in PGST if and only if $2k-2+\ell\equiv 0$ (mod 4) \cite[Theorem 13]{Kirkland2023}. Thus, each vertex of $X$ is sedentary in $X\vee Y$ if and only if either (i) $\Delta$ is a perfect square and $\nu_2(\lambda)\neq 1$ for some $\lambda\in \{2k-2+\ell\pm\sqrt{\Delta}\}$ or (ii) $2k-2+\ell\not\equiv 0$ (mod 4).
\end{enumerate}
\end{example}

\section{Open questions}\label{secOQ}

In this paper, we proved new results on sedentariness. We showed that sedentariness and PGST are dichotomous amongst twin vertices. This allowed us to characterize sedentary vertices in complete multipartite graphs and threshold graphs and provide sharp bounds on their sedentariness. We also investigated sedentariness under direct products, blow-ups and joins. Our results can be used to constructs new examples of graphs with sedentary vertices. We now present some open questions that deserve further exploration. 
\begin{enumerate}
\item Characterize sedentariness in trees, Cayley graphs and distance-regular graphs.
\item Under what conditions is a vertex in a graph neither sedentary nor involved in PGST? Are there other types of vertices that admit the dichotomous property similar to that of twin vertices?
    \item Is the addition of a weighted loop or an attachment of a pendent path to a vertex induce or preserve sedentariness? If yes, then we ask: which weights of loops or lengths of paths achieve this task?
    \item Determine other graph operations (such as the rooted product, lexicographic product, strong product and corona product) that induce and/or preserve sedentariness.
    \item Characterize sedentary vertices in threshold graphs relative to the adjacency matrix, and provide tight bounds on their sedentariness.
\end{enumerate}

\section*{Acknowledgements}
I thank the University of Manitoba Faculty of Science and Faculty of Graduate Studies for the support. I also thank Steve Kirkland and Sarah Plosker for the guidance.

\bibliographystyle{alpha}
\bibliography{mybibfile}
\end{document}